\documentclass[12pt]{amsart}
\usepackage{amscd,amsthm,amssymb,amsfonts,amsmath,epsfig,epic,bbm,url, latexsym, tikz}
\usepackage[arrow,matrix,tips,2cell]{xy}
\usetikzlibrary{arrows}
\usepackage{sidecap}\sidecaptionvpos{figure}{c}
\usepackage{hyperref}
\usepackage{caption} 
\usepackage{sidecap} 
\usepackage{float} 

\setlength{\oddsidemargin}{0.05in}
\setlength{\evensidemargin}{0.05in}
\setlength{\textwidth}{6.4in}


\theoremstyle{plain}
\newtheorem{theorem}{Theorem}
\newtheorem{lemma}[theorem]{Lemma}
\newtheorem{proposition}[theorem]{Proposition}
\newtheorem{corollary}[theorem]{Corollary}

\theoremstyle{definition}

\theoremstyle{remark}
\newtheorem{remark}[theorem]{Remark}


\newcommand{\Z}{\mathbb Z}    
\newcommand{\R}{\mathbb R}    
\newcommand{\C}{\mathbb C}    
\newcommand{\PP}{\mathbb P}   
\newcommand{\T}{\mathbb T}    


\newcommand{\TT}{\mathbb{T}}

\newcommand{\pp}{\mathcal{P}}
\newcommand{\cc}{\mathcal{C}}

\renewcommand{\ss}{\Sigma}


\newcommand{\suchthat}{\ : \ }
\newcommand{\<}{\langle}   
\renewcommand{\>}{\rangle} 

\newcommand{\Log}{\operatorname{Log}}

\newcommand{\am}{{\mathcal{A}}}

\newcommand{\Cone}{\operatorname{Cone}}

\newcommand{\Star}{\operatorname{Star}}

\newcommand{\bsd}{\operatorname{bsd}}
\newcommand{\dsd}{\operatorname{dsd}}

\newcommand{\ignore}[1]{\relax}
\newcommand{\Perm}{\operatorname{Perm}}
\newcommand{\Conv}{\operatorname{Conv}}


\begin{document}

\title{Tailoring a pair of pants}

\author{Helge Ruddat and Ilia Zharkov}
\address{Johannes Gutenberg-Universit\"at Mainz \& Universit\"at Hamburg}
\email{ruddat@uni-mainz.de,\ helge.ruddat@uni-hamburg.de}
\address{Kansas State University, 138 Cardwell Hall, Manhattan, KS 66506}
\email{zharkov@ksu.edu}

\begin{abstract}
We show how to deform the map $\Log: (\C^*)^n \to \R^n$ such that the image of the complex pair of pants $P^\circ \subset {(\C^*)^n}$ is the tropical hyperplane by showing an (ambient) isotopy between $P^\circ \subset {(\C^*)^n}$ and a natural polyhedral subcomplex of the product of the two skeleta $S\times \Sigma \subset  \am \times \cc$ of the amoeba $\am$ and the coamoeba $\cc$ of $P^\circ $. This lays the groundwork for having the discriminant to be of codimension ~ 2 in topological Strominger-Yau-Zaslow torus fibrations.
\end{abstract}
\maketitle
{\hypersetup{linkbordercolor=white} 
\footnote{H.R. was supported by DFG grant RU 1629/4-1 and the Department of Mathematics at Universit\"at Hamburg.
The research of I.Z. was partially supported by the NSF FRG grant DMS-1265228, Simons Collaboration grant A20-0125-001 and by Laboratory of Mirror Symmetry NRU HSE, RF Government grant, ag. No. 14.641.31.0001.}
}\vspace{-1cm}

\section{Introduction}
The $(n-1)$-dimensional {pair-of-pants} $P^\circ$ is 
the main building block for many problems in complex and symplectic geometries. Its projection $\am \subset \R^n$ under the coordinate-wise $\log |z|$ map is called the \emph{amoeba} and its projection $\cc \subset \T^n$ via the argument map is the \emph{coamoeba}. Both $\am$ and $\cc$ have well known skeleta: the spine $S^\circ \subset \am$, also known as the tropical hyperplane, and $\Sigma \subset \cc$, whose cover is the boundary of the $A_n$-permutahedron. 
It is convenient to compactify ${(\C^*)^n}$ (and subspaces in it) to the product $\Delta \times \T^n$, where $\Delta$ is the standard $n$-simplex. 
We introduce a natural polyhedral complex $\pp \subset \Delta \times \T^n$, the {\em ober-tropical} pair-of-pants, which is a subcomplex of $S\times \Sigma$. 
Its projection to $S$ has equidimensional fibers with the center fiber being $\Sigma$.
The main result of the paper Theorem \ref{thm:main} states that $\pp$ is (ambient) isotopic to $P$ (in the PL category).
\begin{figure}[h]
\centering
\includegraphics[height=50mm]{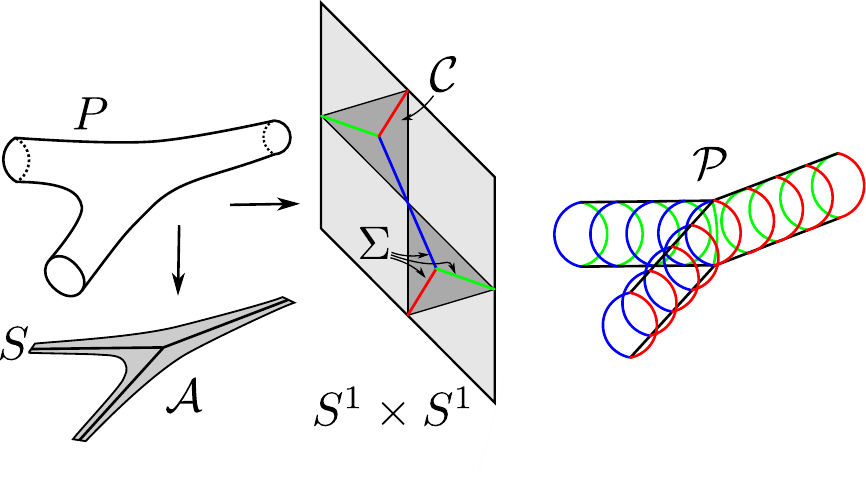}
\caption{The ober-tropical subcomplex $\pp\subset S\times \Sigma$ for $n=2$.}
\label{fig:pants2}
\end{figure}

One can compare the ober-tropical pair-of-pants to the phase tropical pair-of-pants. Both map naturally to the tropical hyperplane, but the ober-tropical version has an advantage that the fibers are half-dimensional. A homeomorphism between complex and phase tropical pairs-of-pants was established in \cite{KZ}.
The result in the present paper, namely an ambient isotopy, is considerably stronger than just a homeomorphism of the two manifolds.

One can easily visualize an isotopy for the one-dimensional pair of pants. 
But the simpleness of the $n=2$ case is deceptive. To get a hint of how complicated the isotopy problem becomes in higher dimensions, consider a long pentagon (a point in the 3-dimensional pair of pants, $n=4$).
\begin{figure}[h]
\centering
\includegraphics[height=20mm]{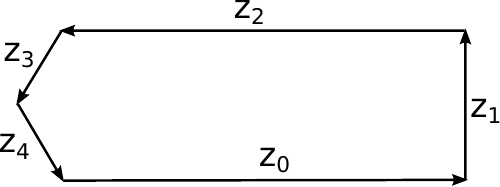}
\caption{The tall house overturned.}
\label{fig:pentagon}
\end{figure}
This pentagon maps to a point $y$ in the face  $S_{02}\colon |z_0|=|z_2|\gg |z_i|,\ i\ne 0,2$ in the spine $S$ of the amoeba. On the other hand it maps to a point $s$ in the face $\Sigma_{\<1,2340\>}$ of the skeleton of the coamoeba (the average of the four arguments of $z_0, z_2, z_3, z_4$ is opposite to the argument of~$z_1$). The problem is that, roughly speaking, the two acutest angles do not separate the two longest sides of the pentagon, so that $y\in S_{02}$ is  ``very far'' from any legitimate strata in the fiber of $s\in \Sigma_{\<1,2340\>}$. (This never happens for triangles because of the larger side lying against larger angle property). So the isotopy cannot be a ``small deformation''. 

Instead of trying to build an isotopy explicitly (which is an interesting but, perhaps, a rather difficult problem) we build regular cell decompositions of both pairs and show that they are homeomorphic. The cell structures respect the natural stratification of $\Delta \times \T^n$, so the homeomorphisms will glue well at the boundary. Thus with a little bit of effort the isotopy can be extended to any general affine hypersurface.

The isotopy we provide may be applied to several questions in mirror symmetry. One application we have in mind is the following. Given an integral affine manifold with simple singularities \cite{logmirror1} we want to build a topological Strominger-Yau-Zaslow fibration \cite{SYZ} with discriminant in codimension~2 (rather than codimension one). So far the only examples are the K3 (with discriminant consisting of 24 points in $S^2$) and the quintic 3-fold \cite{gross}, cf.~\cite{cbm,EvansMauri}. 
In a general case, the idea is roughly to modify the local models of the fibration $\{w=f\}\subset X_\Sigma\times (\C^*)^{n-k}$.  Here $X_\Sigma$ is an affine toric variety with $\Sigma$ a cone over some $k$-dimensional simplex $\Delta_1$, $w\colon X_\Sigma \to \C$  is a natural toric map, and $f\colon (\C^*)^{n-k} \to \C$ is a Laurent polynomial with a prescribed Newton polytope, a simplex~$\Delta_2$. The local model has the structure of a $T^k$-bundle over $\R^k\times (\C^*)^{n-k}$ with fibers collapsing over $\mathcal H_1 \times  H_2$ where $\mathcal H_1$ is the codimension one skeleton of the normal  fan to $\Delta_1$, and $H_2= \{f=0\}$ is a hypersurface in $(\C^*)^{n-k}$. One then further projects to the $\R^{n-k}$-factor by the $\Log$ map in $(\C^*)^{n-k}$. To have the $T^n$-fibration $\{w=f\}\to \R^n$ with discriminant in codimension 2, we can use our isotopy to replace the pair $H_2 \subset (\C^*)^{n-k}$ with a pair $ \mathcal H_2 \subset (\C^*)^{n-k}$ such that now $\mathcal H_2$ is mapped to the {\em spine} of the amoeba of $H_2$.
The details are in our forthcoming paper~\cite{RZ}.


\subsection*{Thanks} The authors would like to thank the organizers of the Tropical workshop at Oberwolfach in May 2019 where the second author came up with the new object $\pp$ for the first time (hence the name). Our gratitude for hospitality additionally goes to the Mittag-Leffler institute, Institute of the Mathematical Sciences of the Americas and MATRIX in Creswick. We thank Dave Auckley for numerous discussions about the topology of ball pairs and we thank the referee for careful reading.

\section{Geometry of the complex pair of pants}

\subsection{Notations} 
We set $\hat n:= \{0,\dots,n\}$.
We will think of $(\C^*)^{n} \cong (\C^*)^{n+1}/\C^*$ as the product $\Delta^\circ \times \T^n$ where $\Delta^\circ$ is the interior of the $n$-simplex 
$$\Delta:=\left\{(x_0, \dots, x_n) \in \R^{n+1} \suchthat x_i\ge 0, \ \sum x_i =1 \right\}
$$ 
and $\T^n := (\R/2\pi \Z)^{n+1}/(\R/2\pi \Z) $ is the $n$-torus with homogeneous coordinates $[\theta_0, \dots, \theta_n]$. It is more natural to work with closed spaces, especially for the gluing purposes, so we will compactify $(\C^*)^{n}$ to  $\Delta \times \T^n$ and all subspaces in $(\C^*)^{n}$ by taking their closures in $\Delta \times \T^n$. We will denote by $\bsd\Delta$ the first barycentric subdivision of $\Delta$. We will also consider the\\[1mm]
\noindent
\begin{minipage}[b]{0.50\textwidth} 
{\bf dualizing} subdivision
$\dsd \Delta$ which is a coarsening of $\bsd\Delta$ by combining all simplices in $\bsd\Delta$ from a single interval $[I,J]$ together. 
That is, a cell $\Delta_{IJ}$ in $\dsd \Delta$ is
$$\Delta_{IJ}:= \Conv\{ \hat \Delta_K \suchthat I\subseteq K \subseteq J \},
$$
where $\hat \Delta_K$ stands for the barycenter of $\Delta_K$. 
\end{minipage}\qquad
\begin{minipage}[t]{0.4\textwidth}
\raisebox{3mm}{
\includegraphics[width=1\textwidth]{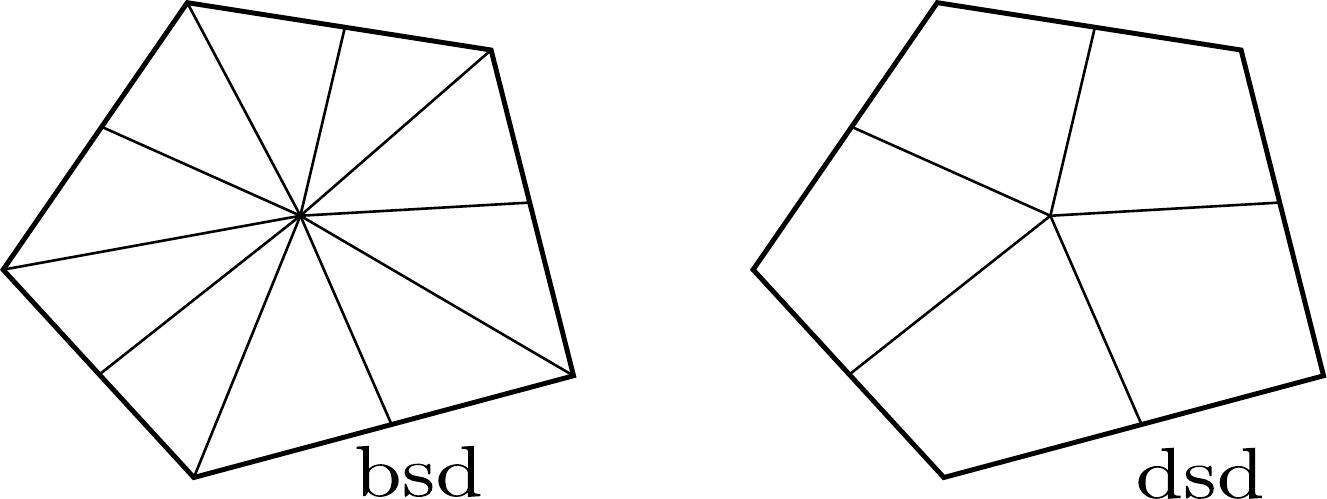} 
}
\end{minipage}

\noindent
Another cell $\Delta_{I'J'}$ is a face of $\Delta_{IJ}$ if and only if $I\subseteq I'\subseteq J'\subseteq J$.
The dualizing subdivision can be applied to any convex polytope $Q$ where it interpolates between the cone structure of the normal fan in the interior of $Q$ and the original face stratification at the boundary of $Q$. For a general polytope $Q$ the cells in $\dsd Q$ need not be polytopes anymore, but $\dsd Q$ is still a regular CW-complex. 

The {\bf hypersimplex} $\Delta^n(2) \subset \Delta^n$ is obtained from the ordinary simplex by cutting the corners half-way. That is, 
$$\Delta^n(2):=\{(x_0,\dots,x_n) \in \R^{n+1}  \suchthat \sum x_i=2\pi  \text{ and } 0\leq x_i \leq \pi \}.
$$
We will use $2 \pi=1$ for the amoeba and $2\pi=6.28\dots$ for the coamoeba.

The {\bf cyclic polytope} $C_d(r)$ is the convex hull of the points $x(t_1), \dots, x(t_r)$ in $\R^d$, where $x(t)=(t,t^2,\dots, t^d)$ and $t_1 < \dots <t_r$ are real numbers.

\subsection{A cell decomposition of the complex pair of pants} 
The $(n-1)$-dimensional {\bf pair-of-pants} $P^\circ$ is the complement of $n+1$ generic hyperplanes in $\C\PP^{n-1}$. 
By an appropriate choice of coordinates we can identify $P^\circ$ with the affine hypersurface in $(\C^*)^{n+1}/\C^*$ given in homogeneous coordinates by 
$$z_0+z_1+\dots + z_n=0.
$$ 
We define the {\bf compactified pair-of-pants} $P$ to be the closure of $P^\circ$ in $\Delta \times \TT^n$ via the map 
$$\mu_1\times \mu_2\colon (\C^*)^{n+1}/\C^* \to \Delta \times \TT^n , \quad 
[z_0,\dots ,z_n] \mapsto \left(\frac{|z_0|}{\sum |z_i |},\dots, \frac{|z_n|}{\sum |z_i |}; [\arg z_0, \dots, \arg z_n] \right)
$$
where we abused notation when writing the quotient of $\TT^{n+1}$ modulo the diagonal $\TT^1$ simply by $\TT^{n}$ (with coordinates $\theta_1-\theta_0,...,\theta_n-\theta_0$).
The closure $P$ is a manifold with boundary, and it can be thought of as a real oriented blow-up of $\C\PP^{n-1}$ along its intersections with the coordinate hyperplanes in $\C\PP^n$.

\begin{SCfigure}[][h]
\centering
\includegraphics[width=0.5\textwidth]{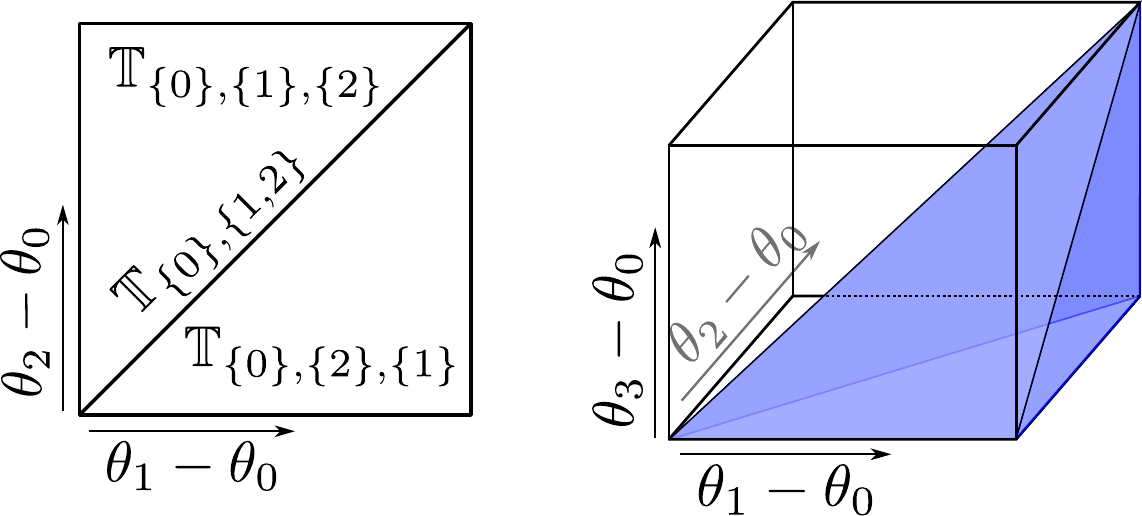}
\caption{Stratification of the torus $\mathbb{T}^n$ for $n=2$ and $n=3$. 
The right only shows 
$\mathbb{T}^3_{\{0\},\{3\},\{2\},\{1\}}$ 
in blue and there exist five further maximal strata. Only zero-dimensional strata are closed.}
\label{fig:torus-strat}
\end{SCfigure}
Next we review a natural stratification $\{P_{\sigma,J}\}$ of $P$ from \cite{KZ} induced from a product stratification of $\Delta \times \TT^n$. The stratification $\{\Delta_J\}$ of the simplex $\Delta$ is given by its face lattice, namely by the non-empty subsets $J\subseteq \hat n$. The set of the hyperplanes $\theta_i=\theta_j,\ i,j\in \hat n$, stratifies the torus $\T^n$ by cyclic orderings of the points $\theta_0, \dots, \theta_n$ on the circle. The strata $\T^n_\sigma$ are labeled by {\bf cyclic partitions} $\sigma=\< I_1, \dots, I_k\> $ of the set $\hat n$, that is, $\hat n= I_1 \sqcup \dots \sqcup I_k$ and the sets $I_1, \dots, I_k$ are cyclically ordered. The elements within each $I_i$ are not ordered. We call the $I_i$ the {\bf parts} of $\sigma$ and we set $|\sigma|:=k$. If all parts are 1-element sets then we will write $\sigma=\<i_0, \dots, i_n\>$. 

We can view points in $P_{\sigma,J} $ as (degenerate) convex polygons in the plane defined up to rigid motions and scaling. The edges represent the complex numbers $z_0, \dots, z_n$ ordered counterclockwise according to $\sigma$. The edges within each $I_s$ are parallel and not ordered. 
The edges of the polygon which are not in $J$ have zero length but their directions are still recorded.

Each $\T^n_\sigma$ can be thought of as the interior of the simplex 
\begin{equation}\label{eq:lift}
\Delta_\sigma:=\left\{(\alpha_1, \dots, \alpha_k) \in \R^{k} \suchthat \alpha_i\ge 0, \ \sum \alpha_i =2\pi \right\}
\end{equation}
The coordinates $\alpha_i$ play the r\^ole of exterior angles of the convex polygons. The precise relation between $\alpha$'s and the original arguments $\theta$'s is as follows: $\theta_i=\theta_j$ for $i,j\in I_s$ and $\theta_i+\alpha_s=\theta_j$ for $i\in I_s ,j\in I_{s+1}$. Here we assume the periodic indexing $I_{s+k}=I_s$.

The inclusion of closed strata $P_{\sigma',J'}\subseteq P_{\sigma,J}$ gives a partial order among the pairs: $(\sigma', J') \preceq (\sigma, J)$ if $\sigma$ is a refinement of $\sigma'$ (we write $\sigma'\preceq \sigma$) and $J'\subseteq J$. To simplify notations we will drop the index $J$ from the subscript if $J=\hat n$. 

We have a natural surjection $\Delta_\sigma\rightarrow\overline{\T^n_{\sigma}}$ which is bijective away from the vertices (all the vertices map to $0\in\T^n$).
Since $P_{\sigma,J}$ does not 
have any points lying over the vertices\\[.6mm]
\noindent
\begin{minipage}[b]{0.50\textwidth} 
of $\Delta_\sigma$, we may view $P_{\sigma,J}$ to be sitting as
$$P_{\sigma,J} \subseteq \Delta_J \times \Delta_\sigma.$$
We set $\sigma_0=\<0,1,\dots,n\>$ and $\Delta_0:=\Delta_{\sigma_0}$. For $J=\hat n$ and $\sigma=\sigma_0$ we denote the corresponding maximal stratum of $P$ by $P_0\subseteq \Delta\times\Delta_0$.
All maximal strata are isomorphic to~$P_0$.

\quad
We say that $\sigma$ {\bf divides} $J$, and write $\sigma | J$, if
\end{minipage}\qquad
\begin{minipage}[t]{0.4\textwidth}
\includegraphics[width=1\textwidth]{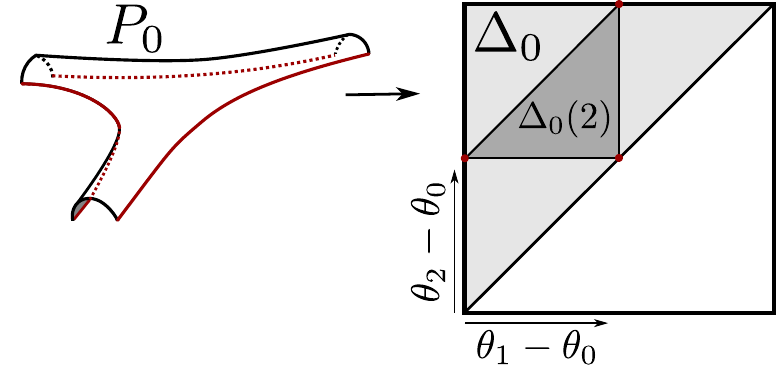}
\end{minipage}


\noindent
$J$ contains elements in at least two of the subsets $I_1, \dots, I_{k}$ of $\sigma=\<I_1, \dots, I_{k}\>$.
The face poset of a stratum $P_{\sigma,J}$ consists of pairs $(\sigma', J')$ such that $\sigma'\preceq \sigma$,   $J'\subseteq J$ and $\sigma'$ divides $J'$. This, in particular, means that $|\sigma'|\ge 2$ and $|J'|\ge 2$. If $\sigma$ does not divide $J$ the stratum $P_{\sigma,J}$ is empty. The dimension of $P_{\sigma,J}$ is $|\sigma| + |J| -4$. 

To represent $(\sigma,J)$ we will use the following {\bf graphical code}. 
Once the maximal partition $\sigma_0=\langle i_0,...,i_n\rangle$ is fixed, we may think of $\sigma_0$ as a cyclic labelling of the edges of the $(n+1)$-gon or equivalently a labelling of the $n+1$ arcs on a circle separated by $n+1$ vertices.
Now every coarsening $\sigma\preceq \sigma_0$ can be represented as a subset $V$ of the vertices, which we call {\bf vertices} of $\sigma$ that separate parts in $\sigma$.
Furthermore, we view $J$ as a subset of the edges (or arcs). 
Consequently, we may represent $(\sigma,J)$ as a circle with $n+1$ vertices and $n+1$ labeled arcs, out of which the arcs in $J$ and the vertices in $V$ have been marked. 
We graphically indicate an arc marking by connecting the adjacent vertices of the arc by a straight line, see Figure~\ref{fig:interlacing}.

%
The $\sigma$-divides-$J$ property now translates into saying that the pair $(V,J)$ be {\bf interlacing}, which means that not all edges in $J$ are lying in an arc between two elements of $V$. 
We call a pair $(V,J)$ {\bf maximally interlacing} if it is interlacing and either every part of $\sigma$ has an element from $J$ (if $|\sigma|\le |J|$) or every part of $\sigma$ contains at most one element of $J$ (if $|\sigma|\ge |J|$).
The lowest strata of $P_0$ are the (maximal) interlacing pairs $(V,E)$ where $V$ is a pair of vertices and $E$ is a pair of edges separated by $V$.
\begin{figure}[h]
\centering
\includegraphics[width=.5\textwidth]{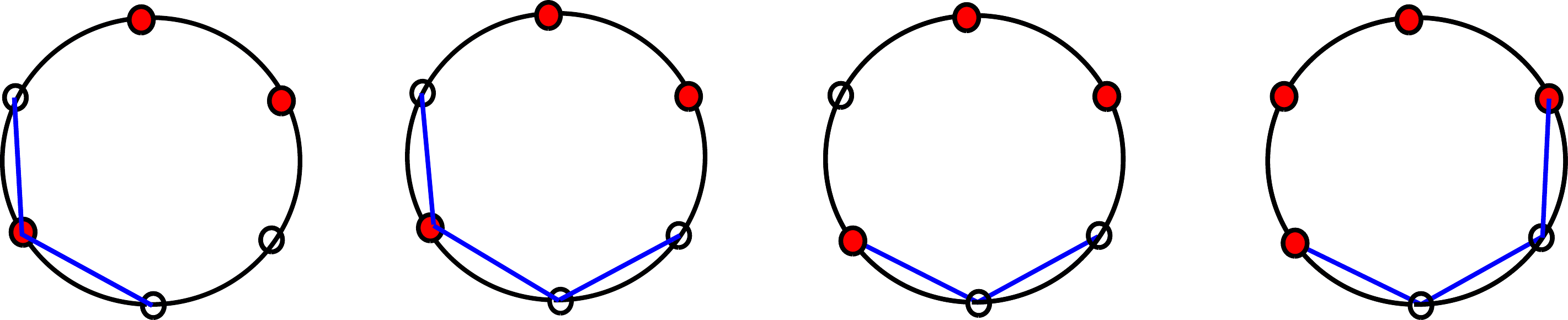}
\caption{For $n+1=6$, examples of graphical code referencing subsets $(V,J)$ that are \emph{maximal} and \emph{non-maximal interlacing}, \emph{non-interlacing}, and \emph{maximal non-interlacing}. The vertex subsets are indicated by red coloring and the edge markings by blue segments.}
\label{fig:interlacing}
\end{figure}
A pair $(V,E)$ is {\bf maximal non-interlacing} if adding an extra element to either $V$ or $E$ will make it interlacing.

The following observation (cf.~Conjecture 1 in \cite{KZ}) was suggested by Stanley \cite{Stanley}.

\begin{proposition}\label{prop:stanley}
The face lattice of $P_0$ is dual to the face lattice of the cyclic polytope $C_{2n-2}(2n+2)$.
\end{proposition}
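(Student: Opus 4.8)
The plan is to exhibit an explicit bijection between the face poset of $P_0$ and the reverse of the face poset of $C_{2n-2}(2n+2)$, and then invoke the combinatorial description of faces of cyclic polytopes via Gale's evenness condition. First I would recall that the faces of $P_0$ are indexed by the maximally interlacing pairs $(V,J)$ (together with their lower faces, the general interlacing pairs), where $V$ is a subset of the $n+1$ vertices of the reference circle, $J$ a subset of the $n+1$ arcs, and $\dim P_{\sigma,J} = |\sigma| + |J| - 4 = |V| + |J| - 4$ since $|\sigma| = |V|$. Here, a facet of $P_0$ corresponds to a maximally interlacing pair with $|V| + |J| = 2n+1$, and the minimal faces (vertices of $P_0$) to the pairs $(V,E)$ with $|V| = |E| = 2$. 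On the cyclic polytope side, $C_{2n-2}(2n+2)$ has $2n+2$ vertices on the moment curve in $\R^{2n-2}$; since $2n-2$ is even, it is $\lfloor (2n-2)/2\rfloor = (n-1)$-neighborly, every subset of size $\le n-1$ of the $2n+2$ vertices spans a face, and the facets are exactly the $(2n-2)$-subsets satisfying Gale's evenness condition.

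The heart of the argument is to set up the dictionary. I would place the $2n+2$ vertices of the cyclic polytope in cyclic correspondence with the $2(n+1) = 2n+2$ objects consisting of the $n+1$ vertices and $n+1$ arcs of the circle, alternating vertex, arc, vertex, arc, $\dots$ around the circle — this interleaved circular arrangement is precisely why the "interlacing" condition appears, and matches the ``blocks of consecutive elements'' phrasing in Gale's evenness condition. Under this identification, a pair $(V,J)$ of total size $2n+2 - (2n-2) = 4$ that is ``maximal non-interlacing'' should correspond to the complement of a facet of $C_{2n-2}(2n+2)$: the non-interlacing condition (all arcs of $J$ lie in a single circular arc cut out by the two vertices of $V$, when $|V|=|J|=2$) translates, after passing to complements in the $2n+2$-cycle, into a $(2n-2)$-subset whose ``gaps'' come in the even-sized contiguous blocks required by Gale evenness. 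More generally I would check that $(V,J) \preceq (V',J')$ in the face poset of $P_0$ (i.e. $V \subseteq V'$ and $J \subseteq J'$, with both interlacing) corresponds under complementation to reverse inclusion of faces of $C_{2n-2}(2n+2)$, so the two posets are anti-isomorphic; the neighborliness of the cyclic polytope is what guarantees that every interlacing pair (not just the maximal ones) does correspond to an actual face, since any subset of size $\le n-1$ of the vertices is a face.

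The main obstacle I expect is the precise verification that the interlacing/non-interlacing dichotomy for pairs $(V,J)$ matches Gale's evenness condition under the alternating circular identification — in particular handling the two regimes $|\sigma| \le |J|$ and $|\sigma| \ge |J|$ in the definition of ``maximally interlacing'' uniformly, and confirming that the dimension formula $|V|+|J|-4$ is the correct codimension-shifted count $(2n-2) - (2n+2 - |V| - |J|) - 1$ on the cyclic polytope side. A clean way to organize this is to first treat facets (codimension-one faces of $P_0$ versus vertices... no: facets of $P_0$ versus facets of $C_{2n-2}(2n+2)$ via complementary $4$-element ``anti-facet'' sets), establish the bijection there using Gale evenness directly, and then extend to all faces by intersecting facets on one side and taking convex hulls of vertex subsets on the other, using neighborliness to see that no spurious faces are created. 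Once the poset anti-isomorphism is established on facets and is inclusion-reversing, it propagates to the whole face lattice.
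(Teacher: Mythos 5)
Your overall strategy --- interleaving the $n+1$ vertices and $n+1$ arcs into a cyclic arrangement of $2n+2$ points identified with the moment-curve points $t_0,\dots,t_{2n+1}$, then applying Gale's evenness condition via complementation --- is the same as the paper's. But the central translation is stated backwards, and as written it is false. The vertices of $P_0$ are the \emph{interlacing} pairs $(V,E)$ with $|V|=|E|=2$, and it is precisely these quadruples whose complements are facets of $C_{2n-2}(2n+2)$: with vertices at odd positions $t_{2i+1}$ and edges at even positions $t_{2i}$, the four marked points alternate vertex/edge/vertex/edge around the circle exactly when the pair is interlacing, and only then does every gap between cyclically consecutive marked points contain an even number of unmarked points (the gap between $t_a$ and $t_b$ has $b-a-1 \bmod (2n+2)$ points, which is even iff $a$ and $b$ have opposite parity). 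A non-interlacing quadruple has two points of equal parity adjacent among the four, hence an odd gap, so its complement violates Gale evenness and is \emph{not} a facet --- check $n=2$: the non-interlacing pair $\{t_1,t_3\}\cup\{t_0,t_4\}$ has complement $\{t_2,t_5\}$, a diagonal of the hexagon, not an edge. So your claim that the ``maximal non-interlacing'' quadruples are the complements of facets identifies exactly the wrong set, and your later assertion ``facets of $P_0$ versus facets of $C_{2n-2}(2n+2)$ via complementary $4$-element sets'' confuses the duality: an anti-isomorphism must send the vertices of $P_0$ (the size-$4$ interlacing pairs) to facets of $C$, and the facets of $P_0$ (where $|V|+|J|=2n+1$) to vertices of $C$, i.e.\ to singleton complements.

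Two smaller problems: the neighborliness argument does not do what you want, since it certifies only that vertex subsets of size $\le n-1$ span faces, which under complementation covers only the faces of $P_0$ of dimension $\ge n-1$, not ``every interlacing pair''; and your dimension check $(2n-2)-(2n+2-|V|-|J|)-1=|V|+|J|-5$ is off by one from $|V|+|J|-4$ (the correct statement is that the complementary face, being a simplex on $2n+2-|V|-|J|$ vertices because cyclic polytopes are simplicial, has dimension $2n+1-|V|-|J|=(2n-2)-1-(|V|+|J|-4)$, as duality requires). With the dictionary corrected to ``interlacing $\Leftrightarrow$ complement is Gale-even,'' your plan collapses to the paper's one-line argument, which verifies exactly this at the vertex--facet level and leaves the extension to the full lattice (via the fact that vertex--facet incidences determine the face lattice) implicit.
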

\begin{proof}
If we label the edges $z_i$ of the polygon by $t_{2i}$ and the vertices between $z_i$ and $z_{i+1}$ by $t_{2i+1}$ then the interlacing condition for a minimal pair $(V,E)$ is equivalent to that there are even number of $t_j$'s between any two consecutive elements of $\{t_{e_1},t_{v_1}, t_{e_2}, t_{v_2}\}$. This confirms the Gale evenness condition (see, e.g. \cite[Theorem 0.7]{ziegler}), for the facets (our vertices) of the cyclic polytope. 
\end{proof}

The lower dimensional strata are isomorphic to the faces of $P_0$ and hence every lower interval $\mathcal W_{\preceq(\sigma,J)} :=\{(\sigma',J')\in \mathcal W \suchthat (\sigma',J') \preceq (\sigma, J)\}$ is the face lattice of a polytope $C^\vee_{\sigma,J}$. In particular, the facets of $P_0$ are dual to the cyclic polytope $C_{2n-3}(2n+1)$. 

\begin{remark}
The appearance of the rational normal curve in the picture seems very intriguing.
\end{remark}

\subsection{The amoeba and its skeleton} \label{sec:amoeba}
The image $\am:=\mu_1(P)\subset \Delta$ is called the (compactified) {\bf amoeba} of the hypersurface $P$ \cite{GKZ}. It is easy to see that $\am$ is the hypersimplex $\Delta(2)$, since the only restrictions on lengths of the $z_i$ are given by the triangle inequalities. That is, if we normalize the perimeter to be 1, then $\am$ is cut out by the inequalities $|z_i|\le1/2$.

The {\bf skeleton} $S$ of $\am$ (also known as the spine or the tropical hyperplane) is a polyhedral subcomplex of $\dsd\Delta$ defined as 
$$S=\{(x_0, \dots ,x_n) \in \Delta \suchthat x_i=x_j\ge x_k \text{ for some } i\ne j \text{ and all } k\ne i,j\}.
$$ 
The faces $S_{IJ}$ of $S$ are parameterized by pairs of subsets $I\subseteq J\subseteq \hat n$, such that $|I|\ge 2$. Namely, $S_{IJ}$ is defined by $x_i=x_{i'} \ge x_k$ for all $i,i' \in I , k\not\in I$ and $x_j=0, j\not\in J$.
A~well-known observation is the following, e.g.~by \cite{PR04}, Theorem 1\,(iii).
\begin{proposition}
The amoeba $\am$ deformation retracts to its skeleton $S$.
\end{proposition}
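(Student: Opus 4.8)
The plan is to exhibit an explicit strong deformation retraction $H\colon \am\times[0,1]\to\am$ that flows each point toward $S$ by lowering its largest coordinate until it ties with the second largest one. For $i\in\hat n$ set $R_i:=\{x\in\am\suchthat x_i=\max_k x_k\}$ and $\mu_i(x):=\max_{k\ne i}x_k$, and on $R_i$ define
$$H(x,t):=x+t\,\bigl(x_i-\mu_i(x)\bigr)\Bigl(\tfrac1{n+1}(1,\dots,1)-e_i\Bigr),$$
where $e_i$ is the $i$-th standard basis vector. The sets $R_i$ form a finite closed cover of $\am$, and on an overlap $R_i\cap R_j$ with $i\ne j$ one has $x_i=x_j=\max_k x_k$, so $\mu_i(x)=x_i$ and the correction term vanishes; both formulas then give $H(x,t)=x$, so the local definitions glue to a continuous map (piecewise-linear in $x$ for each fixed $t$). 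Since $\tfrac1{n+1}(1,\dots,1)-e_i$ has coordinate sum $0$, the image stays on the hyperplane $\{\sum_k x_k=1\}$.

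I would then run through the remaining verifications, each a short computation. (i) $H$ stays inside $\am=\{x\in\Delta\suchthat x_k\le\tfrac12\}$: the coordinate $x_i$ only decreases and remains $\ge\tfrac{x_i+n\mu_i(x)}{n+1}\ge0$, while for $k\ne i$, using $x_k\le\mu_i(x)\le x_i\le\tfrac12$, the coordinate $x_k(t)$ grows to at most $\tfrac{n\mu_i(x)+x_i}{n+1}\le\tfrac12$. (ii) $H(\cdot,0)=\id$; and if $x\in S$ we may take $i$ to be one of the (at least two) indices attaining $\max_k x_k$, so $\mu_i(x)=x_i$ and $H(x,t)=x$, i.e.\ $S$ is fixed pointwise. (Also $S\subseteq\am$, since two coordinates equal to the maximum cannot add up to more than $1$.) (iii) $H(\cdot,1)$ maps into $S$: if $x$ has a unique argmax $i$ (otherwise $x\in S$ already), then the $i$-th coordinate of $H(x,1)$ equals $\tfrac{x_i+n\mu_i(x)}{n+1}$, and this is also the value of the $k$-th coordinate for every $k\ne i$ with $x_k=\mu_i(x)$, so $H(x,1)$ attains its maximum in at least two coordinates. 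Items (i)--(iii) together say that $H$ is a strong deformation retraction of $\am$ onto $S$.

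The one genuinely delicate point is continuity of $H$ along $S$: there the index $i$ selecting the active branch of the formula jumps, and with it the vector $e_i$. The factor $x_i-\mu_i(x)$ is exactly what repairs this, since it vanishes on the locus where the argmax is ambiguous, so the jump of $e_i$ is multiplied by $0$ and $H$ remains continuous; this is the only subtle step, and the inequalities in (i)--(iii) are then routine. As an alternative, for the non-compact amoeba in $\R^n$ one could instead appeal to the classical Ronkin-function retraction behind \cite{PR04}, but carrying that retraction continuously across the boundary $\partial\am$ of the compactified amoeba is no shorter than the construction above, so I would present the explicit $H$.
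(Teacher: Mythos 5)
Your construction is correct, and I checked the computations: on $R_i$ the $i$-th coordinate of $H(x,t)$ is $x_i-\tfrac{nt}{n+1}(x_i-\mu_i(x))$ and each other coordinate is $x_k+\tfrac{t}{n+1}(x_i-\mu_i(x))$, so the sum stays $1$, all coordinates stay in $[0,\tfrac12]$, and at $t=1$ the $i$-th coordinate equals $\tfrac{x_i+n\mu_i(x)}{n+1}$, which is also the value at every second-maximal index and dominates all others since $x_l\le\mu_i(x)$; hence the endpoint lies in $S$. The gluing over the closed cover $\{R_i\}$ and the pointwise fixing of $S$ work exactly as you say. This is, however, a different route from the paper, which offers no argument at all and simply cites Passare--Rullg\aa rd \cite{PR04}, Theorem 1(iii); that reference produces the spine and the retraction for a general amoeba via the Ronkin function and its linearizations, which is far more machinery than is needed here. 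Your explicit piecewise-linear strong deformation retraction is more elementary and, as you note, has the advantage of living directly on the compactified amoeba $\Delta(2)\subset\Delta$, so there is no issue of extending a retraction of the open amoeba across $\partial\am$. One point worth flagging (not a gap for the statement as posed, but relevant to how the paper uses stratifications elsewhere): your flow does not preserve the face stratification of $\Delta$ --- a point with $x_j=0$ for some $j\ne i$ and a unique argmax $i$ acquires $j$-th coordinate $\tfrac{t}{n+1}(x_i-\mu_i(x))>0$, so boundary points of $\Delta$ are pushed into the interior. If one wanted a retraction of each $\am\cap\Delta_J$ onto the corresponding part of $S$, the correction vector $\tfrac1{n+1}(1,\dots,1)-e_i$ would have to be replaced by one supported on $J$; the same inequalities go through with $n+1$ replaced by $|J|$.
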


%
%

\subsection{The permutahedron and the zonotope}
Before we describe the coamoeba and its skeleton we review some notations and basic facts from the $A_n$-root system terminology. 
We write $\R^{n+1}/\R$ for the quotient space $\R^{n+1}/\R(1,...,1)$.
Let $e_0, \dots, e_n$ be a basis of the Euclidean space $(\R^{n+1})^*$. The {\bf roots} in $\Lambda_\R :=(\R^{n+1}/\R)^*
$ are $\alpha_{ij}=e_i-e_j$. The inner product on $\Lambda_\R$ is induced from $(\R^{n+1})^*$. Over $\Z$ the roots generate the {\bf root lattice} $\Lambda \subset\Lambda_\R$. 

The {\bf weight lattice} $W\subset \Lambda_\R$ is the dual lattice to $\Lambda$ with respect to the inner product. For any partition of $\hat n$ into two non-empty subsets $\{I_-,I_+\}$, we define the {\bf fundamental weight}
\begin{equation}\label{eq:fundamental}
w_{I_-,I_+} = \frac{r}{n+1} \sum_{j\in I_+} e_j - \frac{s}{n+1} \sum_{i\in I_-} e_i \ \in W ,
\end{equation}
where $r=|I_-|$ and $ s=|I_+|=n+1-r$.
Its  length squared is 
$$|w_{I_-,I_+}|^2= \frac{s r^2 + r s^2}{(n+1)^2} = \frac{rs}{n+1}.
$$
There are $2^{n+1}-2$ fundamental weights.
\begin{figure}[h]
\centering
\includegraphics[width=.9\textwidth]{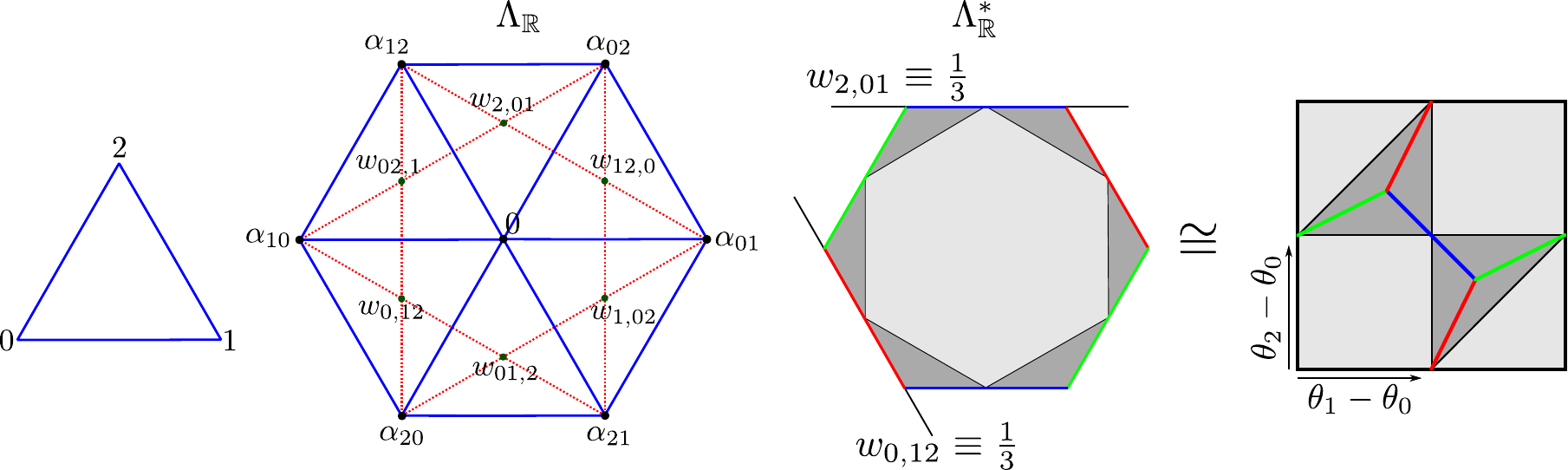}
\caption{For $n=2$: a choice of homogeneous coordinates of the triangle, the corresponding roots and fundamental weights, the permutahedron with zonotope inside and how these give rise to the coamoeba and its skeleton}
\label{fig:permn2}
\end{figure}
 
The quadratic form $Q(w)=\pi |w|^2$ on $W$ provides the {\bf Delaunay triangulation} of $\Lambda_\R$ by looking at the convex upper hull of its graph. This triangulation is $W$-periodic. 
The edge vectors originating at 0 are the fundamental weights. The maximal simplices incident to 0 correspond to permutations $\{i_0, \dots, i_n\}$ of the set $\hat n$. Such a permutation determines a choice of positive and simple roots in~$\Lambda$. Then the convex hull of the corresponding set of $n$  {\bf dominant} fundamental weights
\begin{equation}\label{eq:fund_alcove}
\frac1{n+1} (-1, \dots, -1, n), \quad  \frac1{n+1} (-2, \dots, -2, n-1, n-1), \quad \dots \quad, \frac1{n+1} (-n, 1, \dots, 1)
\end{equation}
(written in the basis $e_{i_0}, \dots, e_{i_n}$) together with the origin gives a maximal Delaunay simplex. The cone they generate is knows as the Weyl alcove.

More generally, the faces in the Delaunay triangulation (up to translations by $W$) are labeled by ordered partitions $\bar \sigma = \{I_1, \dots , I_k\}$ of $\hat n$. The single vertex (up to translations by $W$) corresponds to the 1-partition $\bar \sigma = \{\hat n\}$.

We now consider the dual space $\Lambda_\R^*=\R^{n+1}/\R$. The basis $e_0, \dots, e_n$ defines homogeneous coordinates $[x_0, \dots, x_n]$ in $\Lambda_\R^*$.  The {\bf coweight lattice} $\Lambda^*$ which is dual to $\Lambda$ is naturally identified with the lattice $\Z^{n+1}/\Z \subset \R^{n+1}/\R$. 

The $A_n$-type Delaunay triangulation of $\Lambda_\R$ is a {\bf dicing} with respect to $\Lambda$. That is, it is given by a totally unimodular system of families of parallel hyperplanes (cf.~\cite{Erdahl}). An equivalent statement is that the dual decomposition of $\Lambda_\R^*=\R^{n+1}/\R$, known as the {Voronoi tiling}, is by zonotopes. This tiling is $2 \pi \Lambda^*$-periodic.
Explicitly,  the Voronoi tiles are the domains of linearity of the Legendre transform of $Q(w)$ -- the convex quasi-periodic PL function on $\R^{n+1}/\R$:
$$\Theta(u)=\max_{w\in W} \{\<w,u\> - \pi |w|^2\}.
$$

Now let's go back to the skeleton.
We denote by $\Perm$ the central maximal Voronoi tile (the one which contains 0 in its interior). Other maximal tiles are translations of $\Perm$ by elements in the lattice $2\pi \Lambda^*$. In homogeneous coordinates $[x_0, \dots, x_n]$ the central tile is defined by the following inequalities:
\begin{equation}\label{eq:perm_facets}
-\frac{s}{n+1} \sum_{i\in I_-} x_i +  \frac{r}{n+1}\sum_{j\in I_+} x_j \le \frac{rs}{n+1} \pi,
\end{equation}
one inequality per each fundamental weight $w_{I_-,I_+}$. 

The vertices of $\Perm$ are in one-to-one correspondence with the maximal simplices of the Delaunay triangulation incident to 0. More precisely, the vertex of $\Perm$ corresponding to a permutation $\bar \sigma=\{i_0, \dots, i_n\}$ is given in homogeneous coordinates $[x_{i_0}, \dots, x_{i_n}]$ by
\begin{equation}\label{eq:rho}
\rho_{\bar \sigma}^\vee=\frac{\pi}{n+1}[1, 3, \dots, 2n-1, 2n+1].
\end{equation} 
Indeed, we evaluate $\rho_{\bar \sigma}^\vee$ on the set of the dominants weights \eqref{eq:fund_alcove}. The calculation for a weight 
$$w_{I_-,I_+}=(\underbrace {-\frac{s}{n+1}, \dots, -\frac{s}{n+1}}_r, \underbrace{\frac{r}{n+1}, \dots, \frac{r}{n+1}}_s)
$$
becomes more transparent if we represent $\rho_{\bar\sigma}^\vee$ by
$$\rho_{\bar\sigma}^\vee=\frac\pi{n+1}[-2r+1, -2r+3, \dots , -1, 1, \dots, 2s-1].
$$ 
Then 
$$\<\rho_{\bar\sigma}^\vee, w_{I_-,I_+}\> = \frac\pi{(n+1)^2}(r^2 s + s^2 r) = \pi\frac{rs}{n+1}=\pi Q(w_{I_-,I_+}). 
$$ 

The vertices of $\Perm$ are permuted under the action of the symmetric group action, so $\Perm$ is indeed a permutahedron. But it is a very special one: it tiles the space.
 

\begin{figure}[h]
\centering
\includegraphics[width=.25\textwidth]{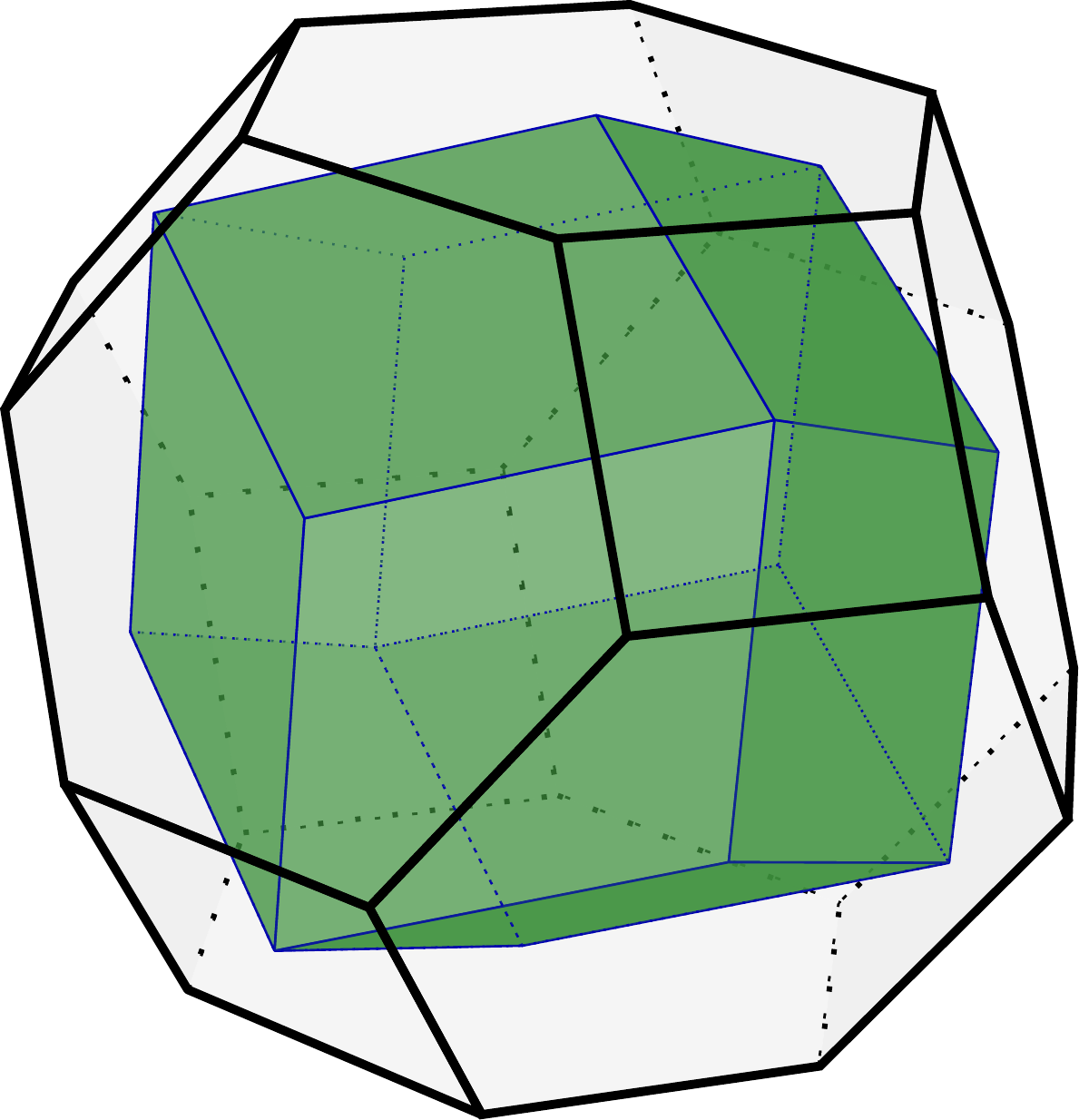}
\caption{For $n=3$: the permutahedron containing the zonotope}
\label{fig:permzono}
\end{figure}
\begin{proposition}\label{prop:zonotope}
The zonotope $Z=  \sum_{i=0}^n [0, \pi]e_{i}$ lies entirely in $\Perm$. Moreover, the vertices of $Z$ are the only points of $Z$ on the boundary of $\Perm$. (See Figure~\ref{fig:permzono} for the n=3 case.) 
\end{proposition}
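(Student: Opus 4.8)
The plan is to verify the two assertions separately and explicitly in the homogeneous coordinates $[x_0,\dots,x_n]$, using the facet description \eqref{eq:perm_facets} of $\Perm$. A point of the zonotope $Z=\sum_{i=0}^n[0,\pi]e_i$ is represented by a vector $x=(x_0,\dots,x_n)$ with $0\le x_i\le \pi$ for all $i$ (modulo the diagonal $\R(1,\dots,1)$). For each fundamental weight $w_{I_-,I_+}$ we must check that
$$-\frac{s}{n+1}\sum_{i\in I_-}x_i+\frac{r}{n+1}\sum_{j\in I_+}x_j\le \frac{rs}{n+1}\pi,$$
where $r=|I_-|$, $s=|I_+|$. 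Multiplying by $(n+1)/s$ (positive), this is equivalent to $\sum_{j\in I_+}x_j-\frac{r}{s}\sum_{i\in I_-}x_i\le r\pi$. Since $x_i\ge 0$ the term $-\frac{r}{s}\sum_{i\in I_-}x_i$ is $\le 0$, and since each $x_j\le\pi$ and $|I_+|=s\le n+1-1$... no, we need $r$ on the right: using $\sum_{j\in I_+}x_j\le s\pi$ alone is not enough when $s>r$. So instead I would symmetrize: add to both sides the quantity $\frac{r}{s}\sum_{i\in I_-}x_i$ and also note the complementary inequality. The cleanest route is: the left-hand side equals $\langle w_{I_-,I_+},x\rangle$ and, because $w_{I_-,I_+}=\frac{1}{n+1}(r\sum_{j\in I_+}e_j-s\sum_{i\in I_-}e_i)$ and $\sum_i e_i=0$ in $\Lambda_\R$, we may replace each coordinate $x_i$ by $x_i-\pi/2$ without changing $\langle w,x\rangle$. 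After this recentering the coordinates lie in $[-\pi/2,\pi/2]$, and then
$$\langle w_{I_-,I_+},x\rangle=\frac{r}{n+1}\sum_{j\in I_+}(x_j-\tfrac\pi2)-\frac{s}{n+1}\sum_{i\in I_-}(x_i-\tfrac\pi2)\le \frac{r}{n+1}\cdot s\cdot\tfrac\pi2+\frac{s}{n+1}\cdot r\cdot\tfrac\pi2=\frac{rs}{n+1}\pi,$$
which is exactly the facet bound. This proves $Z\subseteq\Perm$.

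For the second assertion, equality in the displayed chain forces, for the chosen partition $\{I_-,I_+\}$, that $x_j-\pi/2=\pi/2$ (i.e. $x_j=\pi$) for all $j\in I_+$ and $x_i-\pi/2=-\pi/2$ (i.e. $x_i=0$) for all $i\in I_-$. Thus a point of $Z$ lying on the facet indexed by $w_{I_-,I_+}$ must have every coordinate equal to $0$ or $\pi$, split exactly according to the partition; conversely every $0/\pi$ vector is a vertex of $Z$ and lies on (at least) this facet. Hence the only points of $Z$ on $\partial\Perm=\bigcup_{\{I_-,I_+\}}\{\text{facet}\}$ are precisely the $0/\pi$ vectors, which are exactly the vertices of the cube $Z$. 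I would also remark that a vertex $x$ with support set $I_+$ lies on the single facet $w_{I_-,I_+}$ (and on no other), which pins down the combinatorial position of $Z$ inside $\Perm$ and matches Figure~\ref{fig:permzono}.

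I do not expect a genuine obstacle here; the only thing to be careful about is the bookkeeping with the quotient $\R^{n+1}/\R$, namely that shifting all coordinates by a constant is legitimate because every $w_{I_-,I_+}$ annihilates $(1,\dots,1)$, and that ``vertices of $Z$'' after passing to the quotient still means the images of the $2^{n+1}$ sign vectors (two of which, the all-$0$ and all-$\pi$ vectors, become the single class $0$, consistent with $0\in\Perm^\circ$ so that that class is \emph{not} on the boundary — so strictly the boundary points are the images of the $2^{n+1}-2$ non-constant sign vectors). I would state the recentering step as the one substantive idea and present the two one-line estimates above as the body of the proof.
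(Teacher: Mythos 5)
Your proof is correct and follows essentially the same route as the paper: both arguments come down to verifying the facet inequalities \eqref{eq:perm_facets} of $\Perm$ on the zonotope. The only difference is that the paper checks these inequalities just at the vertices of $Z$ (noting exactly one is an equality there) and relies on convexity, whereas your recentering $x_i\mapsto x_i-\pi/2$ verifies them at every point of $Z$ at once and makes the equality analysis, hence the ``moreover'' clause, immediate.
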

\begin{proof}
The vertices of the zonotope $Z$ are labeled by proper 2-partitions $\{I_-,I_+\}$ of $\hat n$. 
 They are in one-to-one correspondence with the facets of $\Perm$. It is enough to show that each vertex sits in the relative interior of the corresponding facet of $\Perm$. A vertex $v_{I_-,I_+}=[x_0, \dots ,x_n]$ has homogeneous coordinates
\begin{equation}
x_i= \
\begin{cases} 
\pi, i \in I_+\\ 
0, i \in I_-
\end{cases}
\end{equation}
Then we see that $v_{I_-,I_+}$ satisfies all inequalities \eqref{eq:perm_facets}, of which exactly one is an equality and the others are strict.
\end{proof}

\subsection{The coamoeba and its skeleton}\label{sec:coamoeba}
The image $\cc:=\mu_2(P)\subset \T^n$ is called the {\bf coamoeba} of $P$. 
First, some selected history of the subject: 
Mikhalkin \cite{PP} used Viro's patchworking \cite{Viro83,Vi18} to construct (torus) fibrations of hypersurfaces via projection to the spine of the amoeba with coamoeba fibers (see also \cite{Viro11}, \cite{NS} and \cite{KZ}).
The intention to have fibers equi-dimensional with full understanding of their geometry is one of the motivations for the present article (see Proposition~\ref{main-fact}). 
The first account of the skeleton of the coamoeba as the permutahedron to our knowledge is Futaki-Ueda \cite{FutakiUeda}.
On the symplectic side Sheridan \cite{Sheridan} first made use of the zonotope, the complement of the coamoeba, by viewing its boundary as an immersed Lagrangian sphere in the pair-of-pants.
More recently, Nadler-Gammage \cite{GN20} were able to view the skeleton of the coamoeba as a Lagrangian. A different Lagrangian skeleton had previously been given by \cite{RSTZ,PZ}.

When restricted to $P_0$, the corresponding part $\cc_0\subset \cc$ of the coamoeba is again the hypersimplex $\Delta(2) \subset \Delta_0$, since the only restrictions on the exterior angles $\alpha_i$ in a polygon are $\sum_i\alpha_i = 2\pi$ and $\alpha_i\leq \pi$. In particular, the vertices of $\cc$ are labeled by {\em unordered} 2-partitions  $\sigma=\<I_-, I_+\>$.

We will identify the torus $\TT^n$ with $(\R^{n+1}/\R)/2\pi\Lambda^*$ where $\Lambda^*$ is the coweight lattice from the previous section.
Let $\ss$ be the image of the {\em boundary} of $\Perm$ in $\TT^n$. Of the $(n+1)!$ vertices of $\Perm$ only $n!$ are distinct in $\TT^n$. They correspond to total {\em cyclic} orderings $\sigma=\<i_0, \dots, i_n\>$. The pairs of opposite facets of $\Perm$ are identified, thus giving $2^n-1$ facets of $\ss$ labeled by (unordered) 2-partitions $\sigma=\<I_-, I_+\>$. Each contains the corresponding vertex $v_\sigma$ of $\cc$ as its barycenter. 

More generally, the faces  $\ss _\sigma$ of $\ss$ are labeled by the cyclically ordered partitions $\sigma=\<I_1, \dots, I_k\>$. Thus the face lattice of $\ss$ is dual to the (truncated) lattice of the subdivision $\{\TT^n_\sigma \suchthat |\sigma|\ge 2\}$ of $\TT^n$. We call the intersection point $v_\sigma:=\ss_\sigma \cap \TT^n_\sigma$ the {\bf barycenter} of the face $\ss_\sigma$ ($\ss_\sigma$ and $ \TT^n_\sigma$ are of complimentary dimensions). We let 
$$\gamma_s:= \sum_{j=1}^{s-1} |I_j|+ \frac12 |I_s|, \quad s=1, \dots, k.
$$
Then in the homogeneous coordinates $\theta_i$ the barycenter of $\ss_\sigma$ is
\begin{equation}\label{eq:barycenter}
v_\sigma=\frac{2\pi}{n+1} [ \underbrace{\gamma_1,\dots, \gamma_1}_{I_1},\underbrace{\gamma_2,\dots,\gamma_2} _{I_2}, \dots, \underbrace{\gamma_k,\dots , \gamma_k}_{I_k} ]. 
\end{equation}
(see the interpretation in terms of the distinguished polygons $\mathcal D_\sigma$ below).
For $\sigma=\langle i_0,...,i_n\rangle$ a maximal cyclic partition, the vertex $v_\sigma$ of $\ss$ is the projection of each of the $n+1$ permutahedron vertices $\rho^\vee_{\bar\sigma}$ where $\bar\sigma$ is one of the $n+1$ many decyclizations of $\sigma$, cf.~\eqref{eq:rho}.

\begin{proposition}\label{prop:skeleton}
The coamoeba $\cc$ deformation retracts to its skeleton $\ss$.
\end{proposition}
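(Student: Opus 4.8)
The plan is to mimic the proof that the amoeba retracts to its spine, but to do it on the permutahedral side and then push it down to the torus. First I would work upstairs in $\R^{n+1}/\R$. Recall that $\cc$ is covered (via the quotient by $2\pi\Lambda^*$) by a $\Lambda^*$-periodic subset of $\R^{n+1}/\R$, and that $\ss$ is the image of $\partial\Perm$. The key geometric input is Proposition~\ref{prop:zonotope}: the zonotope $Z=\sum_{i}[0,\pi]e_i$ sits inside $\Perm$ touching $\partial\Perm$ only at its vertices, which are exactly the $v_\sigma$ for $2$-partitions $\sigma$. The preimage of $\cc$ in $\R^{n+1}/\R$ is, up to the $2\pi\Lambda^*$-action, the hypersimplex $\Delta(2)$ inside the central simplex $\Delta_0$ (this is the statement recalled just before the proposition, that $\cc_0=\Delta(2)$); its translates tile a neighborhood of $\partial\Perm$. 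So I would describe the complement $\Perm\setminus\ss$-region more precisely: the coamoeba $\cc$, lifted, is the union over lattice translates of the hypersimplices, and $\ss=\partial\Perm$ projected. The retraction I want is radial from the barycenter $0$ of $\Perm$: the map $r_t(u) = (1-t)\,u + t\cdot g(u)$ won't quite work because $\cc$ is not star-shaped about $0$ in an obvious way, so instead I would use the \emph{nearest-point / linear-programming} retraction built from the support function $\Theta$, exactly as in the amoeba case.

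Concretely, here is the cleaner route. The skeleton $S$ of the amoeba is cut out by the conditions $x_i=x_j\ge x_k$; the retraction $\am\to S$ sends a point to the (unique) point of $S$ obtained by ``leveling the two largest coordinates and decreasing them'' — equivalently it is the gradient-flow retraction of the PL concave function $\min_{i\ne j}(\text{sum of all but }x_i,x_j)$ or, dually, the corner locus of the tropical polynomial. On the coamoeba side, $\ss$ is by construction the dual skeleton of the Voronoi subdivision: it is the union of the faces $\ss_\sigma$, and its complement in the torus is exactly the (interiors of the) zonotope cells $2\pi\Lambda^*+Z^\circ$. So I would: (i) show $\TT^n\setminus\ss = \bigsqcup_{\lambda}(2\pi\lambda + Z^\circ)$, a disjoint union of open zonotopes, one per lattice point, using that the $\ss_\sigma$ are the projected facets/faces of the Voronoi tiles and that $Z$ meets $\partial\Perm$ only in vertices; (ii) show $\cc\supseteq \ss$ — each barycenter $v_\sigma$ of $\ss_\sigma$ lies in $\cc$ because it corresponds to an honest (degenerate) convex polygon with exterior angles $\tfrac{2\pi}{n+1}\gamma$-differences all in $[0,\pi]$, and in fact all of $\ss_\sigma$ is realized by such polygons; (iii) observe $\cc = \TT^n\setminus(\text{open zonotope cells not meeting }\cc)$, i.e.\ $\cc$ is $\ss$ together with the parts of the zonotope cells that lie in the hypersimplex neighborhood — equivalently $\cc\setminus\ss$ deformation retracts cell-by-cell onto $\ss$.

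For the actual homotopy I would build it on each maximal stratum and check compatibility. On the central piece $\cc_0=\Delta(2)\subset\Delta_0$, the skeleton portion is $\ss\cap\Delta_0$, which consists of the faces of $\Delta(2)$ where at least two of the $\alpha_i$ equal $\pi$ (these are the ``frozen'' exterior angles). The hypersimplex $\Delta(2)$ deformation retracts to this subcomplex: push each point towards the boundary facets $\{\alpha_i=\pi\}$ by the flow that increases the largest $\alpha_i$'s until two of them hit $\pi$, which is precisely dual to the amoeba retraction under the hypersimplex self-duality $x_i\leftrightarrow \pi-x_i$ noted in the paper ($2\pi=1$ for amoeba, $2\pi=6.28\ldots$ for coamoeba). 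This gives an explicit PL deformation retraction of $\cc_0$ onto $\ss\cap\overline{\Delta_0}$ that is equivariant under the symmetric-group relabeling and hence descends and glues across all $(n+1)!/\text{stabilizer}$ translates to a global deformation retraction $\cc\to\ss$.

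The main obstacle I expect is step (iii) and the gluing: one must verify that the stratumwise retractions agree on overlaps — i.e.\ that the retraction of $\cc_0$ restricted to a boundary face $\cc_{\sigma,J}$ coincides with the intrinsic retraction of that lower-dimensional pair of pants' coamoeba onto its own skeleton — and that the union over all lattice translates of $Z$-cells is clopen-compatible so no point of $\cc$ is moved out of $\cc$. This is the same ``respecting the natural stratification of $\Delta\times\TT^n$'' bookkeeping that the introduction flags as the reason a little care is needed; concretely it reduces to checking that the leveling flow on the exterior angles $\alpha_s$ of $\sigma=\<I_1,\dots,I_k\>$ is independent of whether we view a configuration inside $\cc_\sigma$ or as a degeneration of a configuration in $\cc_{\sigma'}$ for a refinement $\sigma'\preceq\sigma$, which follows because freezing $\alpha_s=\pi$ is a closed condition preserved under refinement. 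Once that compatibility is in hand, the global deformation retraction exists by gluing.
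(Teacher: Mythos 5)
You have the right key input in hand (Proposition~\ref{prop:zonotope} together with the fact that the open zonotope $Z^\circ$ is the complement of $\cc$), which is exactly what the paper's proof rests on, but the argument you build on top of it breaks in two places. First, your step (i) is false: the complement of $\ss$ in $\TT^n$ is the \emph{open permutahedron} $\Perm^\circ$ (a single open cell, since $\ss$ is the image of $\partial\Perm$ and $\Perm$ tiles the universal cover), not a union of open zonotopes; it is the complement of $\cc$, not of $\ss$, that equals $Z^\circ$. Second, and fatally for your stratumwise flow, the target you retract onto is the wrong set: $\ss\cap\overline{\Delta_0}$ is \emph{not} the locus in $\Delta(2)$ where at least two of the $\alpha_i$ equal $\pi$. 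Since the $\alpha_i$ sum to $2\pi$ and are bounded by $\pi$, that locus is just the finite set of vertices of the hypersimplex, whereas $\ss_0$ is an $(n-1)$-dimensional complex containing the barycenter $v_{\sigma_0}$ in the \emph{interior} of $\Delta(2)$ (for $n=2$ it is the tripod visible in Figure~\ref{fig:permn2}). The paper's remark at the end of Section~2 explicitly warns against the duality you invoke: $\ss_0$ retains only the dihedral symmetry of the $(n+1)$-gon, not the full $S_{n+1}$-symmetry of the amoeba spine $S$, so it cannot be obtained from $S$ by the substitution $x_i\leftrightarrow \pi-x_i$, and the ``leveling flow on the exterior angles'' therefore retracts $\cc_0$ onto the wrong subcomplex.

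The correct argument is far simpler than your gluing construction and is precisely the radial route you dismissed: one does not need $\cc$ to be star-shaped. In the fundamental domain, $\cc$ is identified with $\Perm\setminus Z^\circ$; since $Z$ is convex with $0$ in its interior and $Z\subseteq\Perm$ meets $\partial\Perm$ only in the vertices of $Z$ (Proposition~\ref{prop:zonotope}), the radial flow from $0$ pushes each point of $\Perm\setminus Z^\circ$ outward along its ray until it reaches $\partial\Perm$, fixing $\partial\Perm$ pointwise. This is a deformation retraction of $\Perm\setminus Z^\circ$ onto $\partial\Perm$, and because it fixes $\partial\Perm$ it descends through the boundary identifications to a deformation retraction of $\cc$ onto $\ss$ with no stratum-by-stratum compatibility checking required.
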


\begin{proof}
This follows from the fact that the interior of the zonotope $Z$ is the complement of the coamoeba and $Z$ sits inside $\Perm$ by Proposition \ref{prop:zonotope}. 
\end{proof}


For a more profound understanding of $\ss$ it is useful to write down the precise inequalities that cut out the faces of $\ss$. 
We start with facets. The hyperplane equality \eqref{eq:perm_facets} for a facet $\ss_{I_-,I_+}$ reads
\begin{equation}\label{eq:s-facets}
\frac{(\theta_{i_r}+\dots +\theta_{i_n})}{s} - \frac{(\theta_{i_0}+\dots +\theta_{i_{r-1}})}{r} = \pi.
\end{equation}
We can interpret this as follows. Given a subset $I\subset\bar n$, let 
$$\theta_{I}:= \frac1{|I|} \sum_{i\in I} \theta_{i}
$$ 
be the {\bf average argument} in $I$. Then \eqref{eq:s-facets} says that the average arguments $\theta_{I_-}$ and $\theta_{I_+}$ differ by $\pi$.
The inequalities that cut $\ss_{I_-,I_+}$ are the following. For any subset $I'_- \subseteq I_-$ the average argument $\theta_{I'_-}$ differs from $\theta_{I_-}$ by no more than $\frac\pi{n+1}(|I_-| -|I'_-|)$, and same for $I_+$.

Now we describe the defining conditions for $\ss_\sigma$ for an arbitrary partition  $\sigma= \<I_1, \dots, I_k\>$. Combining the  $2^{k-1}-1$ equations \eqref{eq:s-facets} associated to all possible 2-partition coarsenings of $\sigma$ we arrive at the following set of linear equations: 
\begin{equation}\label{eq:s-faces}
 \theta_{I_{s+1}} - \theta_{I_s} = \frac\pi{n+1}(|I_s|+|I_{s+1}|), \text{ for  all } s=1,\dots, k.
\end{equation}
To visualize this condition geometrically it is convenient to introduce a {distinguished} $k$-gon~$\mathcal D_\sigma$. 
Mark $n+1$ equally spaced points on a circle and let $\mathcal D_\sigma$ be the polygon whose set of vertices is the subset of points that separate the parts in the partition $\sigma = \<I_1, \dots, I_k\>$.
Then the equations \eqref{eq:s-faces} say that for a point in $\ss_\sigma$ the average arguments $\theta_{I_s}$ are equal to the arguments of the corresponding sides
 of $\mathcal D_\sigma$.

The inequalities that bound the face $\ss_\sigma$ are similar to those for a facet: for any subset $I'_s \subset I_s$ the average argument in $I'_s$ differs from $\theta_{I_s}$ by no more than 
$\frac\pi{n+1}(|I_s| -| I'_s|)$. The vertices of $\ss$, which correspond to full partitions $\sigma=\<i_0, \dots, i_n\>$, have arguments of the regular $(n+1)$-gon.

\begin{remark}
There is a striking similarity between the amoeba and the coamoeba worlds. Indeed, when restricted to the simplex $\Delta_0$ the corresponding part $\cc_0\subset \Delta_0$ of the coamoeba looks exactly like the amoeba $\am \subset \Delta$, both are identified with the hypersimplex $\Delta(2)$. But here is a bit of warning. The corresponding part of the skeleton $\ss_0$ is not the same as the tropical skeleton $S$. The latter is truly symmetric with respect to all permutations of $\hat n$. On the other hand, $\ss_0$ retains only the dihedral symmetry of the $(n+1)$-gon. What we called the barycenters \eqref{eq:barycenter} of the faces $\ss_\sigma, \sigma\preceq \sigma_0$, are not generally the true barycenters of the corresponding simplicial faces of~$\Delta_0$. Since $S_3\cong D_6$ we don't see this distinction in the $n=2$ case.
\end{remark}

\section{The ober-tropical pair of pants and the isotopy}
\subsection{The ober-tropical pair-of-pants and its stratification}
Recall that the faces $S_{IJ}$ of $S$ are labeled by pairs $I\subseteq J\subseteq \hat n$ and the faces $\ss_\sigma$ of $\ss$ are labeled by cyclic partitions $\sigma=\< I_1,\dots,I_k\>$ of $\hat n$. For the dimensions, we have
\begin{equation} \label{eq-dim-formula}
\dim S_{IJ}=|J\setminus I|,\qquad \dim \Sigma_\sigma=n+1-|\sigma|.
\end{equation}
Now we define a new object, the {\bf ober-tropical} pair-of-pants $\pp$, as a subcomplex of $S\times\ss$:
 $$\pp := \bigcup_{\sigma,I, J} S_{IJ} \times \ss_\sigma,
 $$
where the union is over all triples $(\sigma, I \subseteq J)$ such that $\sigma$ divides $I$. 
For the $n=2$ case, the ober-tropical pair-of-pants are shown on the right in Fig. \ref{fig:pants2}. 

Similar to the cell structure $\{P_{\sigma, J}\}$ of $P$ the stratification $\{\Delta_J\times \TT^n_\sigma\}$ of $\Delta\times \TT^n$ induces a stratification $\{\pp_{\sigma, J}\}$ of $\pp$. As before we will drop the subscript $J$ if $J=\hat n$ and denote by $\pp_0$ the stratum corresponding to $\sigma_0=\{0,1,\dots,n\}$. Again, since $\{\pp_{\sigma, J}\}$ does not touch the vertices of $\Delta_\sigma$ (see \eqref{eq:lift}) we will view each stratum $\pp_{\sigma, J}$ as sitting inside the product of two simplices $\Delta_J \times \Delta_\sigma$.
The polyhedral faces of $\pp_{\sigma, J}$ are labeled by the pairs $(\tau\preceq \sigma', I\subseteq J')$ such that $\sigma'\preceq \sigma, J'\subseteq J$ and $\tau$ divides $I$. In particular, $\pp_{\sigma, J}$ is a subcomplex of $\dsd \Delta_J \times \dsd \Delta_\sigma$. The dimension of the face indexed by 
$(\tau\preceq \sigma', I\subseteq J')$ is
\begin{equation} \label{eq-dim-formula-2}
|J'|-|I|\,+\,|\sigma'|-|\tau|.
\end{equation}

In the $n=2$ case, we can visualize $\pp_0\subset \pp \subset \Delta\times \Delta_0$ as a hexagon in Fig.~\ref{fig:ober2}.
\begin{figure}[ht]
\centering
\includegraphics[width=.5\textwidth]{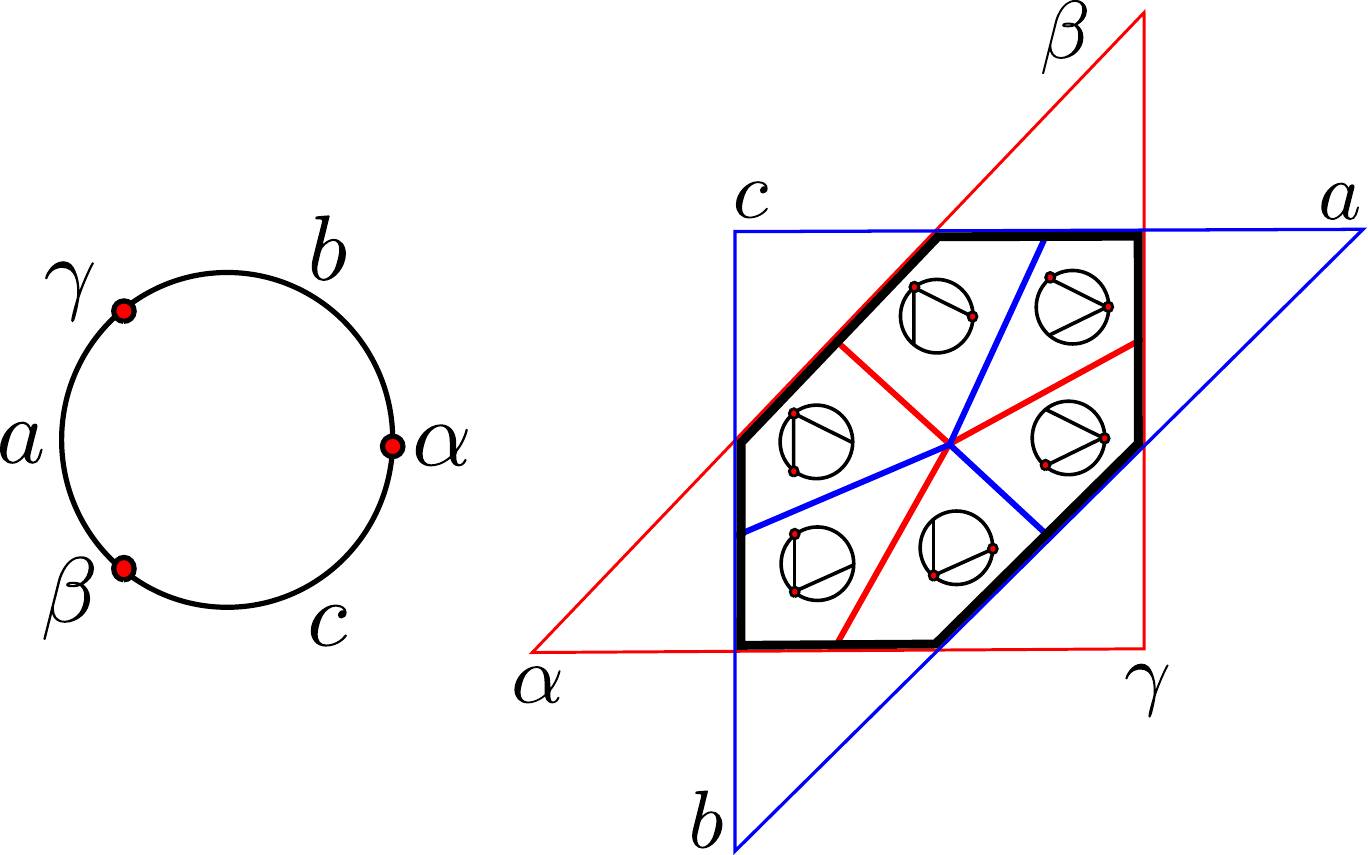}
\caption{The $n=2$ case: realization of $\pp_0$ as a hexagon made up of 6 squares.}
\label{fig:ober2}
\end{figure}
The entire $\pp$ is made up by gluing two of such hexagons along their three (red) sides, as is the classical complex pair-of-pants.

The ober-tropical pair-of-pants $\pp$ projects to both skeleta $\mu_1\colon\pp \to S$ and $\mu_2\colon \pp \to \Sigma$. We would like to describe the generic fibers of both maps. Given a maximal face $S_{ij}$ of $S$ let $T_{ij}$ be the $(n-1)$-torus in $\T^n$ defined by $\theta_i=\theta_j$. On the other side, for a maximal face $\Sigma_\sigma$ of $\Sigma$ (that is $\sigma=\<I_-,I_+\>$ is a 2-partition), let $\mathcal R_\sigma$ be the union of (the closures of) all facets $S_{ij}$ with $i\in I_-, j\in I_+$. 

\begin{proposition} 
\label{main-fact}
The two projections of $\pp$ to $S$ and to $\Sigma$ have $(n-1)$-dimensional fibers. The fiber $\mu_1^{-1}(y)$ over a generic point $y\in S_{ij}$ is homotopically equivalent to the torus $T_{ij}$. 
The fiber $\mu_2^{-1}(s)$ over a generic point $s\in \Sigma_\sigma$ is $\mathcal R_\sigma$ which is an $(n-1)$-ball.
\end{proposition}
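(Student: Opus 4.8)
The plan is to analyze the two projections separately by unwinding the combinatorial definition of $\pp=\bigcup S_{IJ}\times\Sigma_\sigma$, using that a cell $S_{IJ}\times\Sigma_\sigma$ contributes to $\pp$ exactly when $\sigma$ divides $I$. For the dimension count, I would first verify the generic-fiber dimension is $n-1$ by a global count: the maximal cells of $\pp$ are those $S_{IJ}\times\Sigma_\sigma$ with $\dim S_{IJ}+\dim\Sigma_\sigma$ maximal subject to $\sigma\mid I$; plugging in the formulas \eqref{eq-dim-formula}, $\dim S_{IJ}=|J\setminus I|$ and $\dim\Sigma_\sigma=n+1-|\sigma|$, one maximizes $|J|-|I|+n+1-|\sigma|$ under the interlacing-type constraint that $I$ meets at least two parts of $\sigma$ (so $|\sigma|\le|I|$ is not forced, but $|\sigma|\ge 2$ and $|I|\ge 2$). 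Taking $J=\hat n$, $|I|=2$, $|\sigma|=2$ gives $\dim = n-1+(n+1-2) = 2n-2$, and since $\dim S=n-1$ and $\dim\Sigma=n-1$ this is consistent with both projections having $(n-1)$-dimensional image and $(n-1)$-dimensional generic fiber; I would then confirm no larger cell exists and that the projection restricted to a top cell is onto a top cell of the base with complementary-dimensional fiber.

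\textbf{The $\mu_1$-fiber.} Fix a generic $y$ in the maximal face $S_{ij}$ (so $|I|=2$, $I=\{i,j\}$, $J=\hat n$). The cells of $\pp$ lying over $y$ are the products $S_{I'J'}\times\Sigma_\sigma$ with $S_{I'J'}\ni y$ in its relative interior and $\sigma\mid I'$; genericity of $y$ forces $I'=\{i,j\}$ and $J'=\hat n$ (any larger $I'$ or smaller $J'$ gives a face not containing $y$ generically). Hence the fiber is $\{y\}\times\bigcup_{\sigma:\ \sigma\mid\{i,j\}}\Sigma_\sigma$, i.e. the subcomplex of $\Sigma$ consisting of all faces $\Sigma_\sigma$ whose partition $\sigma$ separates $i$ from $j$. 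Geometrically this is exactly $\Sigma\cap\{$average arguments can be chosen with $\theta_i\ne\theta_j$ infinitesimally$\}$ — more precisely the union of closed faces of $\Sigma$ dual to strata $\T^n_\sigma$ with $i,j$ in different parts. I would then identify this union: it is the full subcomplex of $\Sigma$ obtained by deleting the open stars of those faces $\Sigma_\sigma$ with $i,j$ in the \emph{same} part, which deformation retracts onto the torus $T_{ij}=\{\theta_i=\theta_j\}\cap\Sigma$'s skeleton; since $\Sigma$ is a deformation retract of the coamoeba $\cc$ (Proposition \ref{prop:skeleton}) and the subcomplex cut out by $\theta_i=\theta_j$ retracts onto the corresponding sub-coamoeba, which fibers over $T_{ij}\cong(\R/2\pi\Z)^{n-1}$, one gets the stated homotopy equivalence with $T_{ij}$. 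The cleanest route is probably to exhibit an explicit deformation retraction of $\bigcup_{\sigma\mid\{i,j\}}\Sigma_\sigma$ onto a torus fiber, using the permutahedral/zonotope description: collapsing the zonotope complement and averaging arguments within the two ``sides'' of any $\sigma$ separating $i,j$.

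\textbf{The $\mu_2$-fiber.} Fix a generic $s$ in a maximal face $\Sigma_\sigma$, $\sigma=\<I_-,I_+\>$ a $2$-partition. The cells of $\pp$ over $s$ are $S_{IJ}\times\Sigma_{\sigma'}$ with $\Sigma_{\sigma'}\ni s$ generically (forcing $\sigma'=\sigma$) and $\sigma\mid I$, i.e. $I$ meets both $I_-$ and $I_+$. So the fiber is $\bigl(\bigcup_{I:\ I\cap I_-\ne\emptyset\ne I\cap I_+} S_{IJ}\bigr)\times\{s\}$. I claim this union of closed faces of $S$ is precisely $\mathcal R_\sigma$: indeed each such $S_{IJ}$ is a face of some maximal facet $S_{i'j'}$ with $i'\in I\cap I_-\subseteq I_-$ and $j'\in I\cap I_+\subseteq I_+$ (take $i',j'\in I$ achieving the max coordinate, one in each part — genericity/maximality lets us pass to a maximal facet), and conversely every $S_{i'j'}$ with $i'\in I_-,j'\in I_+$ has $\{i',j'\}$ meeting both parts; so the union equals $\bigcup_{i'\in I_-,j'\in I_+}\overline{S_{i'j'}}=\mathcal R_\sigma$ by definition. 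It remains to show $\mathcal R_\sigma$ is an $(n-1)$-ball. For this I would use the tropical-hyperplane picture: $S$ is the spine of the tropical hyperplane (dual to the boundary of the $n$-simplex), $\mathcal R_\sigma$ is the union of those top cells $S_{i'j'}$ ``between'' the two index blocks, and one checks it is the star-shaped (in fact convex, in suitable coordinates on the face $\Delta_{I_-}\ast\Delta_{I_+}$ of $\dsd\Delta$) neighborhood of the single minimal face $S_{\hat n}$ — equivalently the link in $S$ of no face, or the subcomplex dual to the edge $\{I_-\mid I_+\}$ of the hypersimplex. A short direct argument: $\mathcal R_\sigma$ is the closed star in $S$ of the vertex $v=(\tfrac1{n+1},\dots,\tfrac1{n+1})$ of $\dsd\Delta$ intersected with... — more carefully, I would show $\mathcal R_\sigma$ is combinatorially the cone over the join of the boundary complexes of the two simplices on vertex-sets $I_-$ and $I_+$, hence PL-homeomorphic to $\partial\Delta^{|I_-|-1}\ast\partial\Delta^{|I_+|-1}\ast\{pt\}$, and since $\partial\Delta^{a}\ast\partial\Delta^{b}\cong S^{a+b+1}$... one collapses with the apex to get a ball of dimension $(|I_-|-1)+(|I_+|-1)+1-1 = n-1$.

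\textbf{Main obstacle.} The dimension/identification bookkeeping is routine; the real work is the two topological claims: that $\bigcup_{\sigma\mid\{i,j\}}\Sigma_\sigma$ is homotopy equivalent to $T_{ij}$, and that $\mathcal R_\sigma$ is an $(n-1)$-ball (not merely a homotopy ball). For the latter I expect the cleanest proof is to produce an explicit PL-homeomorphism of $\mathcal R_\sigma$ with a simplex or half-space slab, using the linear inequalities $x_i=x_{i'}\ge x_k$ defining the faces $S_{IJ}$: on the closed region $\{x_i=x_j\ \forall i\in I_-,\ \forall j\in I_+,\ \text{these two common values free}\}$ of $\Delta$, the union $\mathcal R_\sigma$ is cut out by finitely many linear equalities/inequalities and one verifies it is convex, hence a ball of the right dimension. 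For the former, pinning down the homotopy type requires care because the subcomplex $\bigcup_{\sigma\mid\{i,j\}}\Sigma_\sigma$ is not obviously a deformation retract of anything standard; I would handle it via the zonotope description of $\Sigma$ (Proposition \ref{prop:zonotope}) to realize it as (a retract of) the part of the coamoeba with $\theta_i\ne\theta_j$, which is a genuine $(n-1)$-torus bundle situation collapsing onto $T_{ij}$.
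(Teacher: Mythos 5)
Your combinatorial identification of the two fibers is correct and is exactly what the paper does: over a generic $y\in S_{ij}$ the fiber is $\{y\}\times\bigcup_{\sigma\mid\{i,j\}}\Sigma_\sigma$, over a generic $s\in\Sigma_{\langle I_-,I_+\rangle}$ it is $\mathcal R_\sigma\times\{s\}$, and the dimension count follows from \eqref{eq-dim-formula}. The genuine gap is in the step you yourself flag as the main obstacle: that $\mathcal R_\sigma$ is an $(n-1)$-ball. Your fallback route --- that $\mathcal R_\sigma$ is cut out by linear equalities and inequalities and ``one verifies it is convex'' --- fails: $\mathcal R_\sigma$ is a union of top-dimensional faces of the tropical hyperplane meeting along lower strata of the spine and is never convex (already for $n=2$, $I_-=\{0\}$, $I_+=\{1,2\}$, it is two legs of the tropical line meeting at an angle at the barycenter; in general it is not contained in any hyperplane). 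Your first route, presenting $\mathcal R_\sigma$ as a cone over a join of boundary complexes, trails off exactly where the work would be, and star-shapedness with respect to the barycenter only gives contractibility, not ball-ness. The paper's argument is a clean polyhedral isomorphism $\mathcal R_\sigma\cong\dsd(\Delta_{I_-}\times\Delta_{I_+})$: a cell $S_{IJ}$ with $\sigma\mid I$ corresponds to the interval from $\Delta_{I\cap I_-}\times\Delta_{I\cap I_+}$ to $\Delta_{J\cap I_-}\times\Delta_{J\cap I_+}$ in the face lattice of the product of simplices, and since the dualizing subdivision of a polytope is a regular CW structure on the polytope itself, $\mathcal R_\sigma$ is the $(n-1)$-ball $\Delta_{I_-}\times\Delta_{I_+}$. (The paper also offers, as an alternative, Viro's isotopy identifying $\mathcal R_\sigma$ with a real component $R_\sigma$ of $P$, cf.\ Remark~\ref{viro}.)

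For the $\mu_1$-fiber your sketch is at roughly the same level of detail as the paper's: the paper argues stratum by stratum that $\mu_1^{-1}(y)\cap\T^n_\sigma$ is isotopic inside $\Delta_\sigma$ to the locus $\theta_i=\theta_j$, in the style of Viro's patchworking, and that these pieces glue to the torus $T_{ij}$. Your proposed deformation retraction of $\bigcup_{\sigma\mid\{i,j\}}\Sigma_\sigma$ onto $T_{ij}$ via the zonotope description would still need to be carried out, but it is plausible and not essentially different in spirit.
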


\begin{proof}
One easily concludes from \eqref{eq-dim-formula} that the dimension of the fibers in both projections is $n-1$.

The fiber over a point $s\in \Sigma_\sigma$ is $\mathcal R_\sigma$ by definition. 
One can explicitly see that $\mathcal R_\sigma$ is isomorphic to $\dsd (\Delta_-\times \Delta_+)$ as a polyhedral complex.
Another way to see that $\mathcal R_\sigma$ is a ball is by invoking Viro's isotopy that identifies it with a component of the real locus, see 
Remark~\ref{viro} below.

For $\mu_1^{-1}(y)$ we need to combine the pieces of $\mu_1^{-1}(y)$ from different $\T^n_\sigma$'s. But each $\mu_1^{-1}(y) \cap \T^n_\sigma$ is again, similar to the Viro's patchworking, is isotopic to the subspace in the simplex $\Delta_\sigma$ defined by  $\theta_i=\theta_j$. Combined together they form the torus  $T_{ij}$.
\end{proof}

\begin{remark} \label{viro}
The compactified real locus in the complex pair of pants $P$ is the inverse image under the projection $P\rightarrow \cc$ of the set of points whose coordinates take values in $\{0,\pi\}$, or in other words, the inverse image of the set of barycenters $v_\sigma$ where $\sigma$ runs over the two-partitions, see \eqref{eq:barycenter}. We denote the inverse image of $v_\sigma$ by $R_\sigma$, it is the real component of $P$ defined by the choice of coordinate signs according to $\sigma=\<I_-,I_+\>$.
Analogously in the ober-tropical case, by definition, $\mathcal R_\sigma$ is naturally identified with the inverse image of $v_\sigma$ under the projection $\mu_2:\pp\rightarrow \Sigma$. An ambient isotopy inside $(\mathbb{R}^*)^n$ that takes $R_\sigma$ to $\mathcal R_\sigma$ was already given in
Viro's patchworking \cite{Viro83}, see also \cite[Theorem 5.6]{GKZ}.
\end{remark}

\begin{remark}
The fibers over generic points in $\Sigma$ being balls allows one to view $\pp$ as the total space of the  ``cotangent bundle'' $T^*\Sigma$ (a symplectic geometer's dream). Of course, some fibers are singular. For instance, the fiber over any vertex of $\Sigma$ is the entire $S$. Still all fibers are contractible which shows that $\pp$ contracts to $\Sigma$. This is as good as it gets if one wants to follow the philosophy that a symplectic manifold is the ``cotangent bundle'' over its Lagrangian skeleton.
\end{remark}

We will show that $\pp$ is, in fact, a topological manifold and the stratification $\{\pp_{\sigma, J}\}$ gives it a regular CW-structure. Then, since the two isomorphic regular CW-complexes are homeomorphic, the two versions of pairs-of-pants, complex and ober-tropical, are homeomorphic. The (much stronger) relative version of this homeomorphism is the main result of our paper.

\begin{theorem}\label{thm:main}
The two spaces $P$ and $\pp$ are (ambient) isotopic in $\Delta\times \TT^n$. An isotopy can be chosen such that it respects the stratification $\{\Delta_J\times \TT^n_\sigma\}$. 
In particular, restricting the isotopy to the real locus $\Delta\times \{0,\pi\}^n$, it reproduces Viro's isotopy that identifies $R_\sigma$ with $\mathcal R_\sigma$ as in Remark \ref{viro}.
\end{theorem}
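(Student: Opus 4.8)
The plan is to establish the theorem in two stages: first a \emph{stratified homeomorphism} $P\cong\pp$, and then an upgrade of that homeomorphism to an ambient isotopy using the fact that both sit as codimension-one subcomplexes of $\Delta\times\TT^n$ in a sufficiently tame way (ball pairs). For the first stage, I would show that the stratification $\{\pp_{\sigma,J}\}$ makes $\pp$ a \emph{regular CW-complex} and that its face poset is isomorphic to the face poset $\mathcal W=\{(\sigma',J')\}$ of $P$ described in Section 2; by Proposition~\ref{prop:stanley} and the remark following it, each lower interval in $\mathcal W$ is the face lattice of a polytope (a cyclic polytope dual $C^\vee_{\sigma,J}$), which is exactly the combinatorial input one needs to recognize a poset as the face poset of a regular CW-ball/sphere. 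The key bookkeeping is the dimension match: formula \eqref{eq-dim-formula-2} for a face of $\pp_{\sigma,J}$ indexed by $(\tau\preceq\sigma',\,I\subseteq J')$ gives $|J'|-|I|+|\sigma'|-|\tau|$, and I would check this agrees with the dimension $|\sigma'|+|J'|-4$ of the corresponding stratum of $P$ under the identification that sends the $P$-pair $(\sigma',J')$ to the $\pp$-quadruple with $\tau$ the unique minimal coarsening compatible with the data — i.e. one must produce an explicit poset isomorphism between quadruples $(\tau\preceq\sigma',I\subseteq J')$ with $\tau\mid I$ and pairs $(\sigma',J')$ with $\sigma'\mid J'$. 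The natural guess is $(\tau,\sigma',I,J')\mapsto$ something like $(\,\sigma' \text{ refined using } \tau,\ J'\,)$; pinning down this bijection and verifying it is order-preserving with the claimed dimension increments is the combinatorial heart of stage one.

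Once the poset isomorphism is in hand, standard regular-CW theory (e.g. the fact that a regular CW-complex is determined up to homeomorphism by its face poset, together with the recognition that both complexes are manifolds with the same local structure) yields a homeomorphism $P\cong\pp$ that carries $P_{\sigma,J}$ to $\pp_{\sigma,J}$ for every $(\sigma,J)$. I would be careful to note that both $P$ and $\pp$ are PL manifolds with boundary of the same dimension $n-1$ (for $\pp$ this uses Proposition~\ref{main-fact} — the generic fibers over $S$ are tori, over $\Sigma$ are balls — plus a local analysis at the non-generic strata), so the homeomorphism can be taken PL.

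For the second stage, the upgrade from ``homeomorphic subcomplexes'' to ``ambient isotopic'' requires controlling the embeddings into $\Delta\times\TT^n$. Here the stratification-compatibility is essential: both $P$ and $\pp$ meet each product stratum $\Delta_J\times\TT^n_\sigma$ in a subcomplex, and on each such stratum the pair $(\Delta_J\times\TT^n_\sigma,\ P\cap(\cdots))$ and $(\Delta_J\times\TT^n_\sigma,\ \pp\cap(\cdots))$ are ball pairs (or sphere pairs on closed strata) of codimension one. The plan is to build the ambient isotopy stratum-by-stratum, by increasing dimension of the stratum, at each step extending the isotopy already constructed on the boundary: on a single stratum one has a homeomorphism of ball pairs agreeing on the boundary, and by the (PL) unknotting/uniqueness of codimension-one ball pairs — this is where Dave Auckley's input on the topology of ball pairs, acknowledged in the introduction, presumably enters — such a homeomorphism of pairs is ambient isotopic rel boundary to the identity. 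Gluing these stratum-wise ambient isotopies (which match on overlaps by construction) produces the global ambient isotopy of $\Delta\times\TT^n$ carrying $P$ to $\pp$ and respecting $\{\Delta_J\times\TT^n_\sigma\}$. Finally, restricting to the real locus $\Delta\times\{0,\pi\}^n$: the real points of $P$ are exactly the fibers $R_\sigma$ over the barycenters $v_\sigma$ with $\sigma$ a $2$-partition, and of $\pp$ the fibers $\mathcal R_\sigma$; by Remark~\ref{viro} the stratum-wise ball-pair isotopy over $v_\sigma$ is forced to be (isotopic to) Viro's, so the restriction claim is automatic.

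\textbf{Main obstacle.} I expect the genuinely hard part to be the \emph{explicit poset isomorphism} in stage one — matching the four-index labels $(\tau\preceq\sigma',I\subseteq J')$ of $\pp$-faces with the two-index labels $(\sigma',J')$ of $P$-strata in an order-preserving, dimension-shifting way, and simultaneously proving $\pp$ is a regular CW-complex (i.e. that each closed cell is a genuine ball and attaching maps are embeddings), which is not obvious since the cells of $\pp_{\sigma,J}$ live in $\dsd\Delta_J\times\dsd\Delta_\sigma$ and the $\dsd$ of a product need not be a polytopal complex. The ball-pair unknotting in stage two is in principle classical in codimension one, but assembling it compatibly across the whole stratification of $\Delta\times\TT^n$ — ensuring the isotopies constructed on lower strata genuinely extend and that no knotting is introduced at the torus directions — will require care, particularly near the deep strata where, as the pentagon example in the introduction warns, $P$ and $\pp$ are ``far apart'' and the isotopy is not a small deformation.
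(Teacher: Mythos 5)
Your overall skeleton (homeomorphisms of ball pairs on each stratum $\Delta_J\times\Delta_\sigma$, extended inductively over the stratification and upgraded to an ambient isotopy via the Alexander trick) does match the paper's proof, but it rests on a false premise that erases the entire technical content. The pairs $(\Delta_J\times\Delta_\sigma, P_{\sigma,J})$ and $(\Delta_J\times\Delta_\sigma,\pp_{\sigma,J})$ are of \emph{codimension two}, not one: $P^\circ$ is a complex hypersurface, so it has real codimension $2$ in $(\C^*)^n\cong\Delta^\circ\times\TT^n$, and indeed $\dim P_{\sigma,J}=|\sigma|+|J|-4$ while $\dim(\Delta_J\times\Delta_\sigma)=|\sigma|+|J|-2$. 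In codimension two, unknotting of ball and sphere pairs is exactly the classical knotting problem and is emphatically not automatic. The paper's Proposition~\ref{prop:unknotted} invokes Levine, Rourke and Papakyriakopoulos, whose unknotting criteria require one to \emph{prove} that the complement of the subball has the homotopy type of a circle; this is Lemma~\ref{lemma:complement}, a long combinatorial collapsing argument (via the ``ghost'' decyclization and the belt complex) that your proposal never addresses. Moreover, the codimension-two unknotting theorems fail or are unavailable in PL for $q=4$, which forces the by-hand treatments of the $(B^4,B^2)$ case for $P$ (Lemma~\ref{lemma:42}, the wedge decomposition of $N_0$) and the $(B^5,B^3)$ case for $\pp$ (Lemma~\ref{lemma:53}); these, together with the local-flatness inductions in Propositions~\ref{prop:complex_balls} and \ref{prop:ober_balls}, are the bulk of the paper. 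Your appeal to ``classical uniqueness of codimension-one ball pairs'' is therefore not a usable step.

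Separately, the part you identify as ``the combinatorial heart'' — an explicit poset isomorphism between quadruples $(\tau\preceq\sigma', I\subseteq J')$ and pairs $(\sigma',J')$ — is not actually needed. The quadruples index the polyhedral faces of a \emph{single} stratum $\pp_{\sigma,J}$ inside $\dsd\Delta_J\times\dsd\Delta_\sigma$, whereas the stratifications of $P$ and of $\pp$ that must be matched are both indexed by the same pairs $(\sigma,J)$ with $\sigma$ dividing $J$, by construction; so the identification of strata is immediate and the dimension formulas $|\sigma|+|J|-4$ agree on both sides. What the main theorem genuinely requires beyond a homeomorphism of regular CW-complexes is precisely the unknottedness of every cell pair on both sides, and for that your proposal supplies no argument.
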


The key ingredient for the proof of Theorem \ref{thm:main} is a compatible collection of homeomorphisms between the cell pairs $(\Delta_J\times\Delta_\sigma, P_{\sigma, J})$ and $(\Delta_J \times\Delta_\sigma, \pp_{\sigma, J})$. Since any stratum is a face of a maximal stratum, which are all identical, we can restrict to the case $\pp_0\subset \Delta\times\Delta_0$. 
Using the graphical code, we can identify any subpartition $\sigma\preceq\sigma_0$ with a subset $V$ of marked vertices of the $(n+1)$-gon and think of faces $\Delta_J$ of $\Delta$ as subsets $J$ of marked edges (see Fig. \ref{fig:interlacing}). 
Now the faces of $\pp_0$ are labeled by two pairs $(I\subseteq J, V\subseteq W)$, where $I$ and $V$ are interlacing.

\subsection{Unknotted ball pairs} Here we collect some basic results from PL topology which we will need to prove the isotopy. We will be concerned with ball pairs of codimension 2. A ball pair $(B^q, B^{q-2})$ is {\bf proper} if $\partial B^{q-2} =B^{q-2} \cap \partial B^q$.
The {\bf standard ball pair} $B^{q, q-2}$ is the pair of cubes $([-1,1]^q, \{0,0\}\times[-1,1]^{q-2})$. Its boundary is the {\bf standard sphere pair} $S^{q-1,q-3}$. We say that a ball or a sphere pair is {\bf unknotted} it is homeomorphic to the appropriate standard pair.

\begin{proposition}\label{prop:unknotted} In the PL category
\begin{enumerate}
\item
A locally flat sphere pair $(S^{q}, S^{q-2}), q\neq 4$, is unknotted if and only if $S^{q}\setminus S^{q-2}$ has the homotopy type of a circle.

\item
A proper locally flat proper ball pair $(B^q, B^{q-2}), q\neq 4$,
whose boundary is an unknotted sphere pair is itself unknotted if and only if
$B^q\setminus B^{q-2}$ has the homotopy type of a circle.

\item A ball pair $(B^{q+1}, B^{q-1})$ which is a cone over an unknotted sphere pair $(S^{q}, S^{q-2})$ is unknotted.

\item 
A homeomorphism between the boundaries of unknotted balls extends to their interior. Moreover one can choose the extension to agree with any given extension on the subball.
\end{enumerate}
\end{proposition}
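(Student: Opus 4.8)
The plan is to treat statements (1) and (2) as instances of the classical codimension-two unknotting theorem, which I would cite, and to prove (3) and (4) directly by coning. For (1), the ``only if'' direction is elementary: writing the standard sphere pair as $\partial(B^{q-1}\times B^2)=(S^{q-2}\times B^2)\cup(B^{q-1}\times S^1)$ with $S^{q-2}=S^{q-2}\times\{0\}$, its complement deformation retracts onto $B^{q-1}\times S^1$, hence onto a meridian circle. For the ``if'' direction I would quote the standard results: when $q=3$ a knot in $S^3$ whose complement is a homotopy circle has group $\Z$ and is therefore trivial (Dehn's lemma / loop theorem, Papakyriakopoulos), and when $q\ge5$ it is the PL unknotting theorem of Levine, proved by surgery and engulfing (see also Wall, Shaneson); $q=4$ is the dimension that must be excluded.

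For (2), the plan is to reduce to (1) by \emph{doubling}. Given a proper, locally flat ball pair with unknotted boundary, form the double $D(B^q,B^{q-2})=(B^q\cup_\partial B^q,\ B^{q-2}\cup_\partial B^{q-2})$; properness and local flatness, together with the collar along the equatorial $S^{q-3}$, make this a locally flat sphere pair $(S^q,S^{q-2})$. To apply (1) I must check $S^q\setminus S^{q-2}\simeq S^1$. Cover it by the two halves $X_1,X_2\simeq B^q\setminus B^{q-2}$, whose intersection $X_1\cap X_2\cong S^{q-1}\setminus S^{q-3}$ is the complement of the \emph{unknotted} boundary, a homotopy circle generated by a meridian; since a meridian also generates $\pi_1(B^q\setminus B^{q-2})\cong H_1(B^q\setminus B^{q-2})\cong\Z$ (the last isomorphism being the hypothesis of (2), and $H_1$ of the complement of a connected codimension-two submanifold always being meridian-generated), each inclusion $X_1\cap X_2\hookrightarrow X_i$ is a $\pi_1$-isomorphism. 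Van Kampen then gives $\pi_1(S^q\setminus S^{q-2})=\Z\ast_{\Z}\Z=\Z$, and Mayer--Vietoris applied to the universal covers --- each a contractible cover of a homotopy circle, glued along a contractible space --- shows the universal cover is acyclic, hence contractible, so $S^q\setminus S^{q-2}\simeq S^1$ and (1) applies. It then remains to see that $(B^q,B^{q-2})$, sitting in the now-standard sphere pair as a hemisphere pair, is itself standard; this is a relative Schoenflies-type statement, valid for $q\ne4$, which in any case is established alongside (1) by the same dimension induction in the references.

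For (3) and (4) I would argue directly by coning. In (3), ``cone over an unknotted sphere pair'' means $(B^{q+1},B^{q-1})=(cS^q,cS^{q-2})$ with $(S^q,S^{q-2})$ unknotted; applying the cone functor to a homeomorphism of pairs onto the standard sphere pair produces a homeomorphism onto $(cS^q_{\mathrm{std}},cS^{q-2}_{\mathrm{std}})=(B^{q+1}_{\mathrm{std}},B^{q-1}_{\mathrm{std}})$, so the pair is unknotted. In (4), after identifying both ball pairs with the standard pair $(D^q,D^{q-2})\cong(D^{q-2}\times D^2,\ D^{q-2}\times\{0\})$, the plain extension of a boundary homeomorphism $h$ is the cone $c(h)\colon(cS^{q-1},cS^{q-3})=(D^q,D^{q-2})\to(D^q,D^{q-2})$. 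For the ``moreover'', given the prescribed extension $g\colon D^{q-2}\to D^{q-2}$ of $h|_{S^{q-3}}$, the homeomorphism $\psi:=g\circ(c(h)|_{S^{q-3}})^{-1}$ of $D^{q-2}$ fixes $\partial D^{q-2}$, hence by the Alexander trick is PL-isotopic rel $\partial$ to the identity through homeomorphisms $\psi_t$ with $\psi_0=\psi$ and $\psi_1=\mathrm{id}$; the fibred map $\Psi(x,y):=(\psi_{|y|}(x),y)$ on $D^{q-2}\times D^2$ is then a PL homeomorphism that is the identity on $\partial D^q$ and restricts to $\psi$ on $D^{q-2}$, so $H:=\Psi\circ c(h)$ extends $h$ and satisfies $H|_{D^{q-2}}=g$; transporting back along the identifications finishes (4).

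The main obstacle is (1) together with the hard half of (2): these are genuinely the codimension-two PL unknotting theorems and must be imported from the literature, and the hypothesis $q\ne4$ is forced by the (open) four-dimensional PL Schoenflies/unknotting problem. By contrast, the doubling reduction in (2) and all of (3) and (4) come down to coning, the Alexander trick, and van Kampen/Mayer--Vietoris, and I would expect these to go through with only routine care about collars and PL structures.
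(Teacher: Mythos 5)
Your treatment of (1), (3) and (4) matches the paper's: (1) is imported from Papakyriakopoulos ($q=3$) and Levine/Rourke ($q\ge 5$; note the paper attributes $q=5$ specifically to Rourke's surgery argument, Levine covering $q\ge 6$), (3) is coning, and (4) is the content of \cite[Prop.~4.4]{RS}, which you reprove by hand via the fibred Alexander isotopy --- that construction is fine. The homotopy computation you do via van Kampen and Mayer--Vietoris on universal covers is correct and is in fact more detail than the paper supplies for the analogous step.

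The genuine gap is the last step of (2). After doubling you obtain an unknotted sphere pair $(S^q,S^{q-2})$ containing $(B^q,B^{q-2})$ as one of two complementary faces, and you then assert that a hemisphere pair of a standard sphere pair with standard equator is standard, calling this a ``relative Schoenflies-type statement.'' No such off-the-shelf statement applies here: the relevant tool, \cite[Cor.~4.9]{RS}, says that a face of an unknotted sphere pair is unknotted \emph{provided the complementary face is unknotted}. In your double the complementary face is a mirror copy of the very ball pair you are trying to unknot, so invoking this would be circular; and the general assertion ``half of an unknotted sphere pair is unknotted'' is essentially statement (2) itself (for $q=3$ it is the nontrivial fact that a proper arc in $B^3$ whose double is the unknot is unknotted). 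The paper's route avoids this entirely: instead of doubling, glue a \emph{standard} ball pair onto $\partial(B^q,B^{q-2})$ (possible because the boundary pair is assumed unknotted). The resulting sphere pair is again locally flat with homotopy-circle complement, hence unknotted by (1), and now the complementary face is standard by construction, so \cite[Cor.~4.9]{RS} applies directly. Replacing your double by this one-sided gluing repairs the argument with no other changes.
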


The first statement is true in TOP for $q\ge 4$, see Freedman \cite[Chapter 11]{Freedman} but we could not find a reference for the analog of the second statement in TOP. So to stay on the safe side we will provide a proof for our $(B^4,B^2)$ case ``by hand'' (see Proposition \ref{lemma:42}) and remain in the PL category. Similarly, we could not use (1) and (3) for the ober-tropical $(B^5,B^3)$ case and therefore, since an induction argument is involved, neither in any higher dimension. We could run the argument in TOP, but again we decided to do the $(B^5,B^3)$ case by hand (see Lemma \ref{lemma:53}) and stay in PL. 
Also we could not find an analogous statement to (1) for the ball pair (which should be true) which might have allowed simpler arguments for some of the statements below.

\begin{proof}[Proof of Proposition~\ref{prop:unknotted}] 
Part (1) is proved by Levine \cite{Levine} for $q\ge 6$, and via a surgery argument by Rourke \cite{Rourke} for $q=5$, cf.~\cite[Theorem~7.6]{RS}. The case $q=3$ was done by Papakiriakopoulos \cite{Papa}. Part (4) is \cite[Proposition~4.4]{RS}. Part (3) is clear. 
We give an argument for the non-trivial direction of (2), cf.~\cite[Proposition 7.5]{RS}: assume that $\partial B^q\setminus B^{q-2}$ is homotopy equivalent to a circle. We may glue to its boundary a standard ball pair to form a sphere pair $(S^q, S^{q-2})$. The resulting sphere pair is proper and locally flat, and $S^q\setminus S^{q-2}$ is homotopy equivalent to a circle, hence it is unknotted by part (1). 
The original ball pair is obtained from an unknotted sphere pair by removing a standard ball pair and so the result follows from 
\cite[Corollary 4.9]{RS}.
\end{proof}
 
%
%
%

\subsection{Homeomorphism of ball pairs and the proof of Theorem \ref{thm:main}}
The main building block for the proof of Theorem~\ref{thm:main} is a homeomorphism of the pairs $(\Delta_J\times\Delta_\sigma , P_{\sigma, J})$ and $(\Delta_J\times\Delta_\sigma , \pp_{\sigma, J})$.

\begin{lemma}\label{lemma:proper_balls}
Both $(\Delta_J\times\Delta_\sigma , P_{\sigma, J})$ and $(\Delta_J\times\Delta_\sigma , \pp_{\sigma, J})$ are proper ball pairs.
\end{lemma}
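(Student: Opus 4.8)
The claim is that each pair $(\Delta_J\times\Delta_\sigma,\, P_{\sigma,J})$, and likewise with $\pp_{\sigma,J}$ in place of $P_{\sigma,J}$, is a proper ball pair of codimension $2$. Both the ambient space $\Delta_J\times\Delta_\sigma$ and the subspace are compact polyhedra, so by PL invariance it suffices to identify the PL-homeomorphism types. The plan is to first reduce to the maximal stratum, then compute dimensions, then prove collapsibility/ball-ness, and finally check the properness (boundary) condition. Since every stratum $P_{\sigma,J}$ (resp.\ $\pp_{\sigma,J}$) is a face of a maximal stratum and all maximal strata are isomorphic to $P_0$ (resp.\ $\pp_0$) sitting in $\Delta\times\Delta_0$, as remarked in the text just before the statement, it is enough to treat $P_0\subset\Delta\times\Delta_0$ and $\pp_0\subset\Delta\times\Delta_0$; the general case follows by passing to the face of the appropriate product of simplices indexed by $(I\subseteq J,\ V\subseteq W)$.

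First I would pin down the dimensions. The ambient space $\Delta_J\times\Delta_\sigma$ has dimension $(|J|-1)+(|\sigma|-1)=|J|+|\sigma|-2$. By the dimension formula recalled in the excerpt, $\dim P_{\sigma,J}=|\sigma|+|J|-4$, so the codimension is exactly $2$; the same count works for $\pp_{\sigma,J}$ using \eqref{eq-dim-formula-2}. Next, to see that the ambient polytope is a PL ball: $\Delta_J\times\Delta_\sigma$ is a product of two simplices, hence a convex polytope of the stated dimension, hence a PL ball with boundary sphere $\partial(\Delta_J\times\Delta_\sigma)$. The substantive point is that $P_{\sigma,J}$ (equivalently its closure) is itself a PL ball. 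Here I would invoke Proposition~\ref{prop:stanley} together with its stated consequence: the face lattice of $P_0$ is dual to that of the cyclic polytope $C_{2n-2}(2n+2)$, and more generally every lower interval $\mathcal W_{\preceq(\sigma,J)}$ is the face lattice of a polytope $C^\vee_{\sigma,J}$. A regular CW-complex whose face poset is the face lattice of a convex polytope is PL-homeomorphic to that polytope, hence to a PL ball of the corresponding dimension; this gives that $P_{\sigma,J}$ is a PL ball. For $\pp_{\sigma,J}$ one argues the same way: it is a subcomplex of $\dsd\Delta_J\times\dsd\Delta_\sigma$ whose faces are indexed by $(\tau\preceq\sigma',\, I\subseteq J')$ with $\tau$ dividing $I$, and one checks that this poset is again (shellably) a ball — e.g.\ by exhibiting a collapse of $\pp_0$ onto a point, using the half-dimensional fibering $\mu_2\colon\pp_0\to\Sigma_0$ from Proposition~\ref{main-fact} (fibers $\mathcal R_\sigma$ are balls, base $\Sigma_0$ is collapsible), or by a direct shelling of the cubical structure visible in Figure~\ref{fig:ober2}.

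The last step is properness: I must show $\partial P_{\sigma,J}=P_{\sigma,J}\cap\partial(\Delta_J\times\Delta_\sigma)$, and similarly for $\pp$. This is where the stratification does the work. The boundary $\partial(\Delta_J\times\Delta_\sigma)$ consists of the proper faces $\Delta_{J'}\times\Delta_{\sigma'}$ with $(J',\sigma')<(J,\sigma)$, and by the description of the face poset of $P_{\sigma,J}$ recalled in the excerpt — faces indexed by $(\sigma'\preceq\sigma,\ J'\subseteq J)$ with $\sigma'$ dividing $J'$ — the boundary sphere of the ball $P_{\sigma,J}$ is precisely the union of those faces that live in proper faces of the ambient product, which is exactly $P_{\sigma,J}\cap\partial(\Delta_J\times\Delta_\sigma)$; the interior face $(\sigma,J)$ itself maps into the open cell $\Delta_J^\circ\times\Delta_\sigma^\circ$. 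The identical bookkeeping applies to $\pp_{\sigma,J}$ via its face labels $(\tau\preceq\sigma',I\subseteq J')$. I expect the main obstacle to be the ball-ness of $\pp_{\sigma,J}$: unlike $P_{\sigma,J}$, it is not presented as (dual to) a single polytope, so one genuinely needs either an explicit shelling/collapsing argument for the cellular structure $\{\pp_{\sigma',J',\tau,I}\}$ or a fibration argument over $\Sigma$ with ball fibers, and making the gluing of the $\mathcal R_\sigma$-fibers over the faces of $\Sigma$ rigorous (so that the total space is a ball and not merely contractible) is the delicate part.
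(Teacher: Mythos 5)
Your treatment of the complex case and of properness is essentially the paper's: the paper simply cites \cite[Prop.~5]{KZ} for the fact that $(\Delta_J\times\Delta_\sigma, P_{\sigma,J})$ is a proper ball pair, which is the same content as your appeal to the polytopal face lattice $C^\vee_{\sigma,J}$ from Proposition~\ref{prop:stanley} (though note that to run the ``regular CW complex with the face poset of a polytope is that polytope'' argument you must already know the stratification is a \emph{regular} CW structure, i.e.\ that the closed strata are balls attached along boundary spheres --- which is precisely what \cite{KZ} establishes, so you cannot get this for free from the poset alone).

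The genuine gap is in the ober-tropical case, exactly where you flag ``the delicate part.'' Neither of your two fallback strategies closes it: collapsibility of $\pp_{\sigma,J}$ onto a point does not imply it is a ball (collapsible compact polyhedra need not be balls), and the map $\mu_2\colon\pp\to\Sigma$ is not a fibration --- its fibers degenerate badly (the fiber over a vertex of $\Sigma$ is all of $S$), so ``ball fibers over a collapsible base'' is not available as stated. The paper's resolution is a one-line structural observation that you explicitly deny when you write that $\pp_{\sigma,J}$ ``is not presented as (dual to) a single polytope'': the faces of $\pp_{\sigma,J}$ are indexed by pairs $(\tau\preceq\sigma',\,I\subseteq J')$, i.e.\ by \emph{intervals} in the face lattice of $C^\vee_{\sigma,J}$, and this is exactly the combinatorics of the dualizing subdivision $\dsd$ introduced at the start of Section~2. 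Hence $\pp_{\sigma,J}$ is identified with $\dsd C^\vee_{\sigma,J}$, a regular subdivision of the polytope $C^\vee_{\sigma,J}$ itself, and is therefore a ball meeting the boundary of $\Delta_J\times\Delta_\sigma$ properly. Without this identification (or a genuinely completed shelling argument in its place), your proof of the ober-tropical half of the lemma is incomplete.
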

\begin{proof}
For the complex pair-of-pants the result was proved in \cite[Prop. 5]{KZ}. In the ober-tropical case, we can identify the polyhedral complex $\pp_{\sigma, J}$ with the dualizing subdivision of $C^\vee_{\sigma, J}$, the dual polytope of the cyclic polytope, see remark after Proposition~\ref{prop:stanley}.
\end{proof}

\begin{lemma}\label{lemma:complement}
Both complements $\Delta_J\times\Delta_\sigma \setminus P_{\sigma, J}$ and $\Delta_J\times\Delta_\sigma \setminus \pp_{\sigma, J}$ are homotopically equivalent to a circle.
\end{lemma}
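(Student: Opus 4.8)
The goal is to show that for each admissible pair $(\sigma,J)$ the open set $\Delta_J\times\Delta_\sigma\setminus P_{\sigma,J}$ (resp. $\setminus\,\pp_{\sigma,J}$) is homotopy equivalent to a circle. Since every stratum $P_{\sigma,J}$ (resp. $\pp_{\sigma,J}$) is isomorphic, as a pair inside a product of two simplices, to a face of the maximal stratum $P_0\subset\Delta\times\Delta_0$ (resp. $\pp_0$), by Lemma~\ref{lemma:proper_balls} and the face-poset description it suffices to treat the maximal case $(\sigma,J)=(\sigma_0,\hat n)$, i.e. to show that $\Delta\times\Delta_0\setminus P_0$ and $\Delta\times\Delta_0\setminus\pp_0$ each deformation retract to $S^1$. (For proper sub-balls the homotopy type of the complement of a face-sub-ball agrees with that of the ambient case up to the relevant collapses; I would spell this out using that $P_{\sigma,J}$ is collared in $P_0$.)

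\textbf{The complex side.} For $\Delta\times\Delta_0\setminus P_0$ the cleanest route is to use the identification of the interior with the honest pair-of-pants. Recall $P^\circ\subset(\C^*)^n$ is the complement of $n+1$ generic hyperplanes in $\C\PP^{n-1}$, so $(\C^*)^n\setminus P^\circ$ is a tubular neighborhood (a punctured-disk bundle) of that smooth hypersurface; its complement in the ambient torus therefore has the homotopy type of a circle bundle over $P^\circ$, but more to the point one computes directly that the inclusion of a small meridian loop around the hypersurface $\{z_0+\dots+z_n=0\}$ induces an isomorphism on $\pi_1$ and kills all higher homotopy — this is the standard fact that the complement of a smooth affine hypersurface in $(\C^*)^n$ retracts onto a circle bundle which here is trivial on $\pi_1$ because the ambient space $\Delta^\circ\times\T^n$ is homotopy equivalent to $T^n$ and the hypersurface class generates. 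Passing to the compactification $\Delta\times\Delta_0$ only adds the boundary strata, which are collared and do not change the homotopy type of the complement. Alternatively, and perhaps more in keeping with the combinatorial spirit of the paper, one can use the regular CW-structure: $\Delta\times\Delta_0\setminus P_0$ deformation retracts to the union of those cells of $\dsd\Delta\times\dsd\Delta_0$ disjoint from $P_0$, and one checks this subcomplex collapses to a single loop using the cyclic-polytope combinatorics of Proposition~\ref{prop:stanley}.

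\textbf{The ober-tropical side.} For $\Delta\times\Delta_0\setminus\pp_0$ I would argue purely combinatorially. By construction $\pp_0=\bigcup_{\sigma|I,\,I\subseteq J}S_{IJ}\times\Sigma_\sigma$ is a subcomplex of the product $\dsd\Delta\times\dsd\Delta_0$, so its complement deformation retracts onto the full subcomplex $K$ spanned by the cells $\Delta_{I'J'}\times\Delta_{\bar\sigma}$ with $(I'\subseteq J')$-vs-$\bar\sigma$ \emph{non}-interlacing (in the graphical code: all marked edges of $J'$ lie in a single arc between two consecutive marked vertices of $\bar\sigma$, or there are fewer than two marked vertices, etc.). The plan is to show $K$ collapses to $S^1$. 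First, project: the subcomplex of $\dsd\Delta_0$ consisting of cells $\Delta_{\bar\sigma}$ that can appear (those with $|\bar\sigma|\le$ something) is contractible, and similarly on the $\dsd\Delta$ side, so the obstruction to contractibility of $K$ is exactly the "cyclic" constraint that the two sets of markings avoid interlacing — this is precisely the combinatorial reflection of the meridian circle around the hypersurface. Concretely I expect to exhibit a discrete Morse function (or an explicit sequence of elementary collapses) on $K$ whose only critical cells are one vertex and one edge, using the dihedral symmetry of the $(n+1)$-gon to organize the matching.

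\textbf{Main obstacle.} The routine parts — reducing to the maximal stratum, and the complex-side computation via the hyperplane-complement picture — are standard. The real work is the combinatorial collapse of the non-interlacing subcomplex $K$ on the ober-tropical side to a circle, uniformly in $n$: one must produce an acyclic matching on all pairs $(\Delta_{I'J'}\times\Delta_{\bar\sigma})$ with $(I',J')$ non-interlacing with $\bar\sigma$, leaving exactly a loop, and verify there are no unwanted critical cells. I expect the clean way to do this is to fix a base vertex of the $(n+1)$-gon and match cells according to whether that vertex is marked (in $\bar\sigma$) or whether its two adjacent edges lie in $J'$, peeling the complex down in the dihedral order; checking that this matching is acyclic and has the right critical cells is the one place where genuine care is needed. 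An acceptable fallback, if the explicit collapse proves unwieldy, is to compute $H_*(K)$ directly (it is the homology of an explicit simplicial complex built from non-interlacing pairs, amenable to a long exact sequence / Mayer–Vietoris over the parts of the partition) to see it is that of $S^1$, and then invoke that $K$ — being the complement-retract of a locally flat codimension-$2$ subcomplex in a ball — is simply connected away from the meridian, so a homology circle with the correct $\pi_1$, hence homotopy equivalent to $S^1$.
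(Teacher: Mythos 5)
Your ober-tropical half is essentially the paper's strategy: retract the complement onto the subcomplex of non-interlacing cells (the paper's $L_{\sigma,J}$, your $K$) and then collapse that to a circle. But you leave the collapse as a plan; this is exactly where the paper does real work, first collapsing $L_{\sigma,J}$ to a two-dimensional ``belt'' by an induction on the four distances between the edge-string and the vertex-string of a non-interlacing pair, and then collapsing the belt to a circle. Even the first retraction (complement of the subcomplex onto the complementary subcomplex) is not automatic here, because one works in the dualizing subdivision rather than the full barycentric one; the paper justifies it by showing each $\overline F\cap\pp_{\sigma,J}$ is collapsible and invoking regular neighborhood theory. Also, your opening reduction to the maximal stratum does not work: a general stratum $(\Delta_J\times\Delta_\sigma,P_{\sigma,J})$ with $|\sigma|\neq|J|$ is not a maximal stratum of any lower-dimensional pair of pants, and the homotopy type of the complement of a face inside \emph{its own} cell is a separate, lower-dimensional statement that cannot be read off from the complement of $P_0$ in $\Delta\times\Delta_0$. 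The paper therefore treats all $(\sigma,J)$ uniformly.

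The genuine gap is on the complex side. The complement of $P^\circ$ in $(\C^*)^n$ is \emph{not} a punctured-disk bundle over $P^\circ$, nor is it homotopy equivalent to a circle: it is the $n$-dimensional pair of pants (already for $n=1$ its $\pi_1$ is free of rank $2$). More importantly, the lemma concerns the complement inside the single ball $\Delta_J\times\Delta_\sigma$, where $P_{\sigma,J}$ is a properly embedded codimension-$2$ ball; whether its complement has the homotopy type of a circle is precisely the unknottedness-flavored assertion that the whole paper is organized around, and it cannot be deduced from meridian/generation arguments (a knotted ball pair would have a complement with nonabelian $\pi_1$). Your alternative suggestion fails for the same reason $P_{\sigma,J}$ is not a subcomplex of $\dsd\Delta_J\times\dsd\Delta_\sigma$, so there is no a priori retraction onto ``the cells disjoint from $P_0$.'' The paper's key idea, absent from your proposal, is to introduce the ghost coordinate $g=-z_0-\dots-z_n$ (with phase recorded even when $|g|=0$) and thereby realize the complement of a regular neighborhood of $P_{\sigma,J}$ as an explicit union of cells $P_{\langle g,\bar\sigma\rangle,\hat J}$ of a one-dimension-higher pair of pants; this regular CW complex is then collapsed, by cancelling the cells where $g$ is present in $J'$ against those where it is absent, onto the very same complex $L_{\sigma,J}$, after which Step 3 applies to both sides simultaneously. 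Without this (or an equivalent mechanism) your complex-side argument does not go through.
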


\begin{proof}
Let $W$ denote the set of vertices of $\sigma$. Let $L_{\sigma,J}$ be the subcomplex of $\Delta_J \times \Delta_\sigma$ consisting of pairs $( \sigma', J')$ such that $\sigma'$ does {\em not} divide $J'$. That is  $L_{\sigma,J}$ consists of {\em non-interlacing} pairs $(I,V)$, where $I\subseteq J$ and $V\subseteq W$. 
We claim that both complements in the assertion deformation retract to $|L_{\sigma,J}|$, which then collapses to a circle.

Step 1: $\Delta_J\times\Delta_\sigma \setminus \pp_{\sigma, J}$ retracts to $|L_{\sigma,J}|$.
Recall that  $\pp_{\sigma, J}$ is a subcomplex of $\dsd\Delta_J \times \dsd\Delta_\sigma$ which consists of all pairs $(\tau\preceq \sigma', I\subseteq J')$ such that $\sigma'\preceq \sigma, J'\subseteq J$ and $\tau$ divides~$I$. 
The open star $\Star^\circ(\pp_{\sigma, J})$ is defined to be the union of faces in $\dsd\Delta_J \times \dsd\Delta_\sigma$ whose closure meets $\pp_{\sigma, J}$, that is, the open star $\Star^\circ(\pp_{\sigma, J})$ consists of all pairs $(\tau\preceq \sigma', I\subseteq J')$ such that $\sigma'$ divides $J'$. 
Consequently, the complement of $\Star^\circ(\pp_{\sigma, J})$ in $\Delta_J \times \Delta_\sigma$ is the $\dsd \times \dsd$ refinement of $L_{\sigma,J}$. 

Now we need to show that the complement of $\Star^\circ(\pp_{\sigma, J})$  in $\dsd\Delta_J \times \dsd\Delta_\sigma$ is a deformation retract of $\Delta_J\times\Delta_\sigma \setminus \pp_{\sigma, J}$. 
Consider a face $F:=(\tau\preceq \sigma', I\subseteq J')$ in $\Star^\circ(\pp_{\sigma, J})$ that is not contained in $\pp_{\sigma, J}$ and let $\overline{F}$ denote its closure in $\dsd\Delta_J \times \dsd\Delta_\sigma$.
The intersection $M:=\overline{F}\cap \pp_{\sigma, J}$ is a proper subcomplex of $\overline{F}$. We claim it suffices to show that $M$ is collapsible because then its regular neighborhood $N(M)$ in $\partial F$ is a ball (see, e.g.~\cite[Theorem 3.26]{RS}) and so its complement in $\overline{F}$ retracts to $\partial F \setminus N(M)$. Hence, inductively on dimension, we can retract all faces in $\Delta_J\times\Delta_\sigma \setminus \pp_{\sigma, J}$ from $\Star^\circ(\pp_{\sigma, J})$.

To see that $M$ is collapsible, we note that its faces are pairs $(\tau'\preceq \sigma'', I'\subseteq J'')$ with $\tau\preceq\tau'\preceq\sigma''\preceq\sigma'$, $I\subseteq I'\subseteq J''\subseteq J'$ such that 
$\tau'$ divides $I'$. For a fixed $(\tau',I')$ they form a Boolean lattice. 
This lattice consists of only a single element if and only if $\tau'=\sigma'$ and $I'=J'$. In all other cases, the Boolean lattice decomposes into a disjoint union of face-facet pairs. Each such pair can be collapsed to the empty set if both cells in the pair are not contained in any other cell of $M$. (The collapsing of pairs works as indicated in Figure~\ref{fig:g-collapse}.) The latter condition can be ensured by induction on the minimality of $(\tau',I')$ (so by \eqref{eq-dim-formula-2}, the maximality of the cell dimension).
Hence, following this induction, we can collapse the entire complex $M$ to the single vertex $(\tau'=\sigma', I'=J')$.

Step 2: $\Delta_J\times\Delta_\sigma \setminus P_{\sigma, J}$ retracts to $|L_{\sigma,J}|$.
One may view the complement of the $(n-1)$-dimensional pair-of-pants in $(\C^*)^n$ as the $n$-dimensional pair-of-pants. 
In a similar vein we represent the complement $N_{\sigma, J}$ of a regular neighborhood of $P_{\sigma, J}$ in $\Delta_J \times \Delta_\sigma$ as the union of $k$
cells $P_{\bar \sigma, \hat J}$, where $\bar \sigma$ is a decyclization of $\sigma= \<I_1,\dots , I_k\>$, that is a choice of $I_1$, and $\hat J=J\cup\{g\}$ contains one additional {\bf ghost} element $g$. 
Geometrically, the ghost is $g=-z_0-\dots-z_n$ and its phase ``breaks the necklace'' $\sigma$. 
We permit the case $|g|=0$ but still require $g$ to have a phase in order to be representing points in $N_{\sigma, J}$. 
The full face lattice of the regular CW-complex $N_{\sigma, J}$ is given by $(\hat\sigma', J') \preceq (\langle g,\bar\sigma\rangle, \hat J)$ for which $\hat\sigma'$ divides $J'$. 
Here, $\langle g,\bar\sigma\rangle$ refers to the cyclic partition obtained from the decyclization $\bar \sigma$ by inserting $\{g\}$ as a new part at the point where $\sigma$ was broken and then viewing it as a cyclic partition.
The case $|g|>0$ is equivalent to $g\in J'$.

The elements in the face lattice of $N_{\sigma, J}$ decompose into three groups: a) those with $g\not \in J'$; b) those with $g \in J'$ and $\hat\sigma'$ divides $J'\setminus\{g\}$; c) those with $g \in J'$ and $\hat\sigma'$ does not divide $J'\setminus\{g\}$. From an element in b) we may remove $g$ from $J'$ to have an element in a) 
\\[0mm]
\noindent
\begin{minipage}{\linewidth}
\begin{minipage}{0.5\textwidth} 
and this yields a bijection of the sets a) and b). Furthermore the cell with $g$ removed is a facet of the original one and lies in the boundary of $N_{\sigma, J}$. This fact may be used to collapse the cells in a) and b) altogether (inductively starting with the largest-dimensional ones), so that $N_{\sigma, J}$ collapses to its subcomplex consisting of the cells in c), see Figure~\ref{fig:g-collapse}.

In the subcomplex of the cells in c), we combine together all those cells $(\hat\sigma', J')$ into a single cell $(\sigma',J')$ that have the property that
\end{minipage}
\hspace{0.02\textwidth}
\begin{minipage}{0.48\textwidth}
\begin{figure}[H]
\includegraphics[width=.92\linewidth]{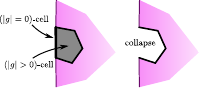}
\captionsetup{width=\linewidth}
\caption{Collapsing two cells that form a pair under the bijection of a) and b).}
\label{fig:g-collapse}
\end{figure}
\vspace{0mm} 
\end{minipage}
\end{minipage}
removing $g$ from $\hat\sigma'$ yields $\sigma'$. (The counterclockwise order on the ghost position provides a shelling.) Note that the resulting CW complex is isomorphic to $L_{\sigma,J}$.

Step 3: $|L_{\sigma,J}|$ has the homotopy type of a circle.
We first collapse $L_{\sigma,J}$ to a (generally) 2-dimensional subcomplex, the {\bf belt}, which then collapses to a circle (see Fig. \ref{fig:belt}). 
\begin{figure}[h]
\centering
\includegraphics[height=40mm]{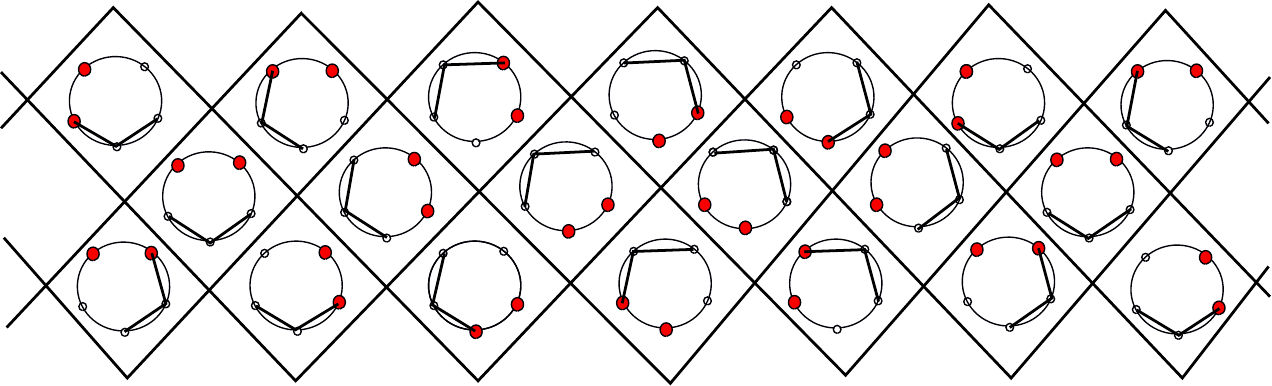}
\caption{The result of collapsing $L_{\sigma,J}$ to its belt for $n+1=5$.}
\label{fig:belt}
\end{figure}

Given a non-interlacing configuration $(I,V)\subseteq(J,W)$ we define four integers, the {\bf distances} between $I$ and $V$, as follows. In the counterclockwise order let $v_f\in V$ be the first vertex in the string of vertices $V$ and $v_l\in V$ be the last vertex. Similar let $e_f, e_l \in I$ be the first and the last edges in the string of edges. We let $k_e$ be the number of edges in $J$ and $k_v$ be the number of vertices in $W$ between $v_l$ and $e_f$. Similarly, let $m_e, m_v$ be the numbers of edges in $J$ and vertices in $W$ between $e_l$ and $v_f$. Note that $(I,V)$ is a face of $(I',V')$ only if their sets $(v_f, v_l, e_f, e_l)$ are the same, or $k'_e\ge k_e, k'_v \ge k_v ,m'_e\ge m_e, m'_v \ge m_v$ with at least one inequality strict. 


We collapse the intervals $I_{(v_f, v_l, e_f, e_l)}$ of the configurations with fixed sets $(v_f, v_l, e_f, e_l)$ starting from those with $(k_e, k_v, m_e, m_v)=(0,0,0,0)$ and increasing the distances. That is, we collapse $I_{(v_f, v_l, e_f, e_l)}$ after all intervals $I_{(v'_f, v'_l, e'_f, e'_l)}$ with $k'_e\ge k_e, k'_v \ge k_v ,m'_e\ge m_e, m'_v \ge m_v$ with at least one inequality strict had been collapsed. Any such interval is either Boolean (then it collapses), or consist of a single element (then it stays). Note that the elements in $I_{(v_f, v_l, e_f, e_l)}$ are faces of its own elements or faces of elements with lower distances (which we collapsed already by induction). 

Thus we end up with the subcomplex, which we call the {\bf belt} of $L_{\sigma,J}$, of non-interlacing pairs $(I,V)$ such that $I$ has one or two consecutive elements in $J$, and $V$ consists of one or two consecutive vertices in $W$. The belt is a 2-dimensional complex whose 2-faces are squares (see Fig.~\ref{fig:belt}) labeled by consecutive non-interlacing pairs ($v_f$ next to $v_l$) and  ($e_f$ next to  $e_l$), unless $|J|=2$ or $\sigma$ consists of two parts, in which case, either $v_f=v_l$ or $e_f=e_l$, and it is already a circle. The belt has two coordinate directions: in one direction the configuration of edges does not change, in the other direction the vertices are fixed.

Finally we collapse the belt to a circle. It's not as pretty as before since we will have to make a choice between $k$'s and $m$'s. We choose $k$'s and collapse the belt to an 1-dimensional subcomplex $S^1_{\sigma,J}$ (it will be the upper boundary in  Fig.~\ref{fig:belt}). The vertices of  $S^1_{\sigma,J}$ are pairs $(v,e)$ which are distance $(k_e, k_v)\leq(1,1)$ apart. The edges  of $S^1_{\sigma,J}$ are $(V,e)$ and $(v,E)$ where $V$ is a consecutive pair of vertices and $E$ is a consecutive pair of edges with $(k_e, k_v)=(0,0)$. So that $|S^1_{\sigma,J}|=S^1$.
We collapse the squares one by one starting with $(m_e, m_v)=(0,0)$ and increasing the distances $(m_e, m_v)$ as we did before.
\end{proof}

\begin{corollary}\label{cor:complement}
The complements of the sphere pairs $\partial(\Delta_J\times\Delta_\sigma) \setminus \partial P_{\sigma, J}$ and $\partial(\Delta_J\times\Delta_\sigma) \setminus \partial \pp_{\sigma, J}$ have the homotopy type of a circle.
\end{corollary}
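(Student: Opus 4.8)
The plan is to deduce Corollary \ref{cor:complement} from Lemma \ref{lemma:complement} together with the ball-pair structure established in Lemma \ref{lemma:proper_balls}. The point is that for a proper ball pair $(B^q,B^{q-2})$ coming from a subcomplex, removing a neighborhood of the boundary sphere pair from the full pair is a homotopy equivalence onto the interior, so the homotopy type of the complement is unchanged by passing to the boundary. Concretely, write $(B,B')$ for either pair $(\Delta_J\times\Delta_\sigma, P_{\sigma,J})$ or $(\Delta_J\times\Delta_\sigma, \pp_{\sigma,J})$; by Lemma \ref{lemma:proper_balls} this is a proper PL ball pair, so $B'\cap\partial B=\partial B'$ and $(B',\partial B')$ is itself an unknotted ball pair (being a PL ball in a PL ball meeting the boundary properly; or one can cite the explicit identifications with dualizing subdivisions of cyclic polytopes and their duals).

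First I would show that the inclusion $\partial B\setminus\partial B'\hookrightarrow B\setminus B'$ is a homotopy equivalence. Since $(B,B')$ is a proper ball pair, a collar argument gives that $B\setminus B'$ deformation retracts onto $\partial B\setminus \mathrm{int}\,N(\partial B')$, where $N(\partial B')$ is a regular neighborhood of $\partial B'$ in $\partial B$: one uses the product collar $\partial B\times[0,1)$ of $\partial B$ in $B$, adapted so that it restricts to a collar of $\partial B'$ in $B'$ (such a compatible collar exists for proper PL ball pairs, cf.~\cite[\S 3]{RS}), and pushes $B\setminus B'$ off the interior. Then I would note that $\partial B\setminus\mathrm{int}\,N(\partial B')$ is itself a deformation retract of $\partial B\setminus\partial B'$, again by a regular-neighborhood collar. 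Composing, $B\setminus B'$ and $\partial B\setminus\partial B'$ are homotopy equivalent. Combining with Lemma \ref{lemma:complement}, which says $B\setminus B'\simeq S^1$, gives $\partial B\setminus\partial B'\simeq S^1$ in both the complex and the ober-tropical case, which is exactly the assertion of the corollary.

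Alternatively, and perhaps more cleanly, I would argue directly on the level of the sphere pair: glue a standard ball pair onto $(\partial B,\partial B')$ along itself to obtain a closed sphere pair $(S^q,S^{q-2})$, observe $S^q\setminus S^{q-2}$ is obtained from $(\partial B\setminus\partial B')$ by attaching the complement of the standard ball pair, which is an open tubular-neighborhood piece homotopy equivalent to a circle glued along a circle-equivalent piece; a Mayer--Vietoris or van Kampen computation then shows $S^q\setminus S^{q-2}\simeq S^1$ iff $\partial B\setminus\partial B'\simeq S^1$, and the former follows since removing a point (or a codimension-$2$ neighborhood) and re-examining shows the complement retracts to the one of $B\setminus B'$. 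Either route reduces the statement to Lemma \ref{lemma:complement} by purely formal homotopy-theoretic bookkeeping.

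The main obstacle is making the collar compatibility precise: one needs that the ball pair $(B,B')$ admits a collar of $\partial B$ that simultaneously collars $\partial B'$ in $B'$, i.e.\ that the pair is ``collared as a pair.'' For proper PL ball pairs this is standard (it is part of the local-flatness/regular-neighborhood package in \cite{RS}), and in our situation it can also be seen concretely because both $P_{\sigma,J}$ and $\pp_{\sigma,J}$ are described as subcomplexes of products of simplices with explicit face structure, so one can build the collar combinatorially from the cone structure near $\partial(\Delta_J\times\Delta_\sigma)$. Once this compatibility is in hand, the rest is a routine deformation-retraction argument, and the homotopy type $S^1$ is inherited verbatim from Lemma \ref{lemma:complement}.
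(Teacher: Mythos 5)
There is a genuine gap. Your central step --- that for a proper PL ball pair $(B,B')$ the inclusion $\partial B\setminus\partial B'\hookrightarrow B\setminus B'$ is a homotopy equivalence, via a collar/regular-neighborhood deformation retraction of $B\setminus B'$ onto $\partial B\setminus \mathrm{int}\,N(\partial B')$ --- is false for general proper ball pairs, even locally flat ones. A collar of $\partial B$ only controls a neighborhood of the boundary; it gives no way to push the rest of $B\setminus B'$ into it. Concretely, take a knotted sphere pair $(S^4,S^2)$ and remove a standard ball pair: the result is a proper, locally flat ball pair $(B^4,B^2)$ whose boundary pair is the unknotted $(S^3,S^1)$, so $\partial B\setminus\partial B'\simeq S^1$, while $\pi_1(B\setminus B')$ is the (non-abelian) group of the $2$-knot; no such retraction can exist. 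So the corollary cannot be deduced from Lemma \ref{lemma:complement} plus the ball-pair structure alone --- one must use the \emph{specific} retraction constructed there. Your second route is also circular: gluing on a standard ball pair and computing $S^q\setminus S^{q-2}$ only shows that the glued sphere complement retracts onto $B\setminus B'$; extracting the homotopy type of $\partial B\setminus\partial B'$ from this would require already knowing the pair is unknotted, which is exactly what Corollary \ref{cor:complement} is later used to prove.

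The paper's proof instead observes that the deformation retractions of $\Delta_J\times\Delta_\sigma\setminus P_{\sigma,J}$ and $\Delta_J\times\Delta_\sigma\setminus\pp_{\sigma,J}$ onto $L_{\sigma,J}$ in Lemma \ref{lemma:complement} are realized by sequences of elementary cell collapses that respect the boundary stratification, and that the target complex $L_{\sigma,J}$ lies entirely in $\partial(\Delta_J\times\Delta_\sigma)$. Restricting those collapses to the boundary sphere therefore retracts $\partial(\Delta_J\times\Delta_\sigma)\setminus\partial P_{\sigma,J}$ (resp.\ $\partial(\Delta_J\times\Delta_\sigma)\setminus\partial\pp_{\sigma,J}$) onto $L_{\sigma,J}\simeq S^1$. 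If you want to salvage your write-up, replace the general ball-pair claim by this appeal to the combinatorial structure of the retraction in Lemma \ref{lemma:complement}.
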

\begin{proof}
The deformation retractions of
$\Delta_J\times\Delta_\sigma \setminus P_{\sigma, J}$ and $\Delta_J\times\Delta_\sigma \setminus \pp_{\sigma, J}$ to
$L_{\sigma,J}$ in the previous lemma are obtained by a sequence of cell collapses and this clearly respects the boundary.
Since the complex $L_{\sigma,J}$ (which eventually retracts to the circle) sits in the boundary of the ball $\Delta_J\times\Delta_\sigma$ in both cases, we proved the corollary. 
\end{proof}

\begin{lemma}\label{lemma:53}
Let $\dim (\Delta_J\times\Delta_\sigma) \le 5$. The ball pair $(\Delta_J\times\Delta_\sigma , \pp_{\sigma, J})$ is unknotted.
\end{lemma}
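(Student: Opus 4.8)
The plan is to verify the hypotheses of Proposition~\ref{prop:unknotted} in the low-dimensional cases and then invoke it. By Lemma~\ref{lemma:proper_balls}, $(\Delta_J\times\Delta_\sigma,\pp_{\sigma,J})$ is a proper ball pair, and by \eqref{eq-dim-formula-2} together with $\dim(\Delta_J\times\Delta_\sigma)=|J|+|\sigma|-2$ its codimension is $2$ precisely when $\pp_{\sigma,J}$ is nonempty (i.e.\ $\sigma$ divides $J$), which is the case we must treat; when $\pp_{\sigma,J}$ is empty there is nothing to prove. So assume $q:=\dim(\Delta_J\times\Delta_\sigma)\le 5$ and the pair has codimension $2$. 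First I would dispose of the cases $q\le 3$: a codimension-$2$ ball pair of dimension $\le 3$ with $B^q\setminus B^{q-2}$ homotopy equivalent to a circle is unknotted by Proposition~\ref{prop:unknotted}(2) (the case $q=3$ is classical, and $q\le 2$ is trivial), and the homotopy type of the complement is exactly Lemma~\ref{lemma:complement}. This leaves the genuinely interesting case $q=5$, i.e.\ the ball pair $(B^5,B^3)$.

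For $q=5$ the strategy is to peel off the boundary: by Corollary~\ref{cor:complement} the complement $\partial(\Delta_J\times\Delta_\sigma)\setminus\partial\pp_{\sigma,J}$ is homotopy equivalent to a circle, and the boundary sphere pair $(S^4,S^2)$ is locally flat (it is a regular-CW sphere pair coming from the dualizing subdivision of a polytope pair, as in Lemma~\ref{lemma:proper_balls}), so it is unknotted by Proposition~\ref{prop:unknotted}(1) with $q=4$ — here I would double-check that the $q=4$ instance of part~(1) is the one that is actually available (it is the case $q\ne 4$ in the interior statement, so in fact $S^3\subset S^4$ needs a separate justification). Since the referenced part~(1) excludes $q=4$, I would instead argue for the boundary directly: $\partial\pp_{\sigma,J}$ sits in $\partial(\Delta_J\times\Delta_\sigma)$ as the boundary of a proper ball pair $(B^4,B^2)$ one dimension down, and unknottedness of $(B^4,B^2)$ — which is the content of Proposition~\ref{lemma:42} promised in the text — gives unknottedness of its boundary sphere pair. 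Then Proposition~\ref{prop:unknotted}(2) with $q=5$ (allowed, since $5\ne 4$) applies: the boundary is an unknotted sphere pair and $B^5\setminus B^3$ is homotopy equivalent to a circle by Lemma~\ref{lemma:complement}, so $(\Delta_J\times\Delta_\sigma,\pp_{\sigma,J})$ is unknotted.

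Concretely the steps, in order, are: (1) reduce to codimension exactly $2$ and nonempty $\pp_{\sigma,J}$, recording $q=|J|+|\sigma|-2\le 5$; (2) handle $q\le 3$ using Proposition~\ref{prop:unknotted}(2) and Lemma~\ref{lemma:complement}; (3) for $q=5$, establish that the boundary sphere pair $\partial(\Delta_J\times\Delta_\sigma,\pp_{\sigma,J})$ is unknotted, by identifying it with the boundary of the lower-dimensional proper ball pair handled in the $(B^4,B^2)$ analysis (Proposition~\ref{lemma:42}), or alternatively directly from Proposition~\ref{prop:unknotted}(1) if the $q=4$ sphere statement is invoked in TOP and then smoothed; (4) apply Proposition~\ref{prop:unknotted}(2) with $q=5$, feeding in the unknotted boundary from step (3) and the circle homotopy type of $B^5\setminus B^3$ from Lemma~\ref{lemma:complement}.

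The main obstacle is step (3): the clean inductive route through Proposition~\ref{prop:unknotted}(1) is blocked at $q=4$, exactly the dimension where high-dimensional unknotting theory fails, so the unknottedness of the boundary $(S^4,S^2)$ cannot simply be quoted and must be obtained by the hands-on $(B^4,B^2)$ argument (Proposition~\ref{lemma:42}) that the authors have flagged they will supply separately; everything else is a bookkeeping application of the PL unknotting package together with the complement computations already in hand. One should also be careful that local flatness of $\pp_{\sigma,J}$ and $\partial\pp_{\sigma,J}$ is genuinely verified — this follows because each is a dualizing subdivision of a polytope (pair), hence a regular CW-subcomplex sitting properly and locally flatly, but it deserves an explicit sentence rather than being taken for granted.
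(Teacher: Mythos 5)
There is a genuine gap, and it sits exactly where you flagged the ``main obstacle'': step (3) does not actually establish that the boundary sphere pair $(S^4,S^2)=(\partial(\Delta_J\times\Delta_\sigma),\partial\pp_{\sigma,J})$ is unknotted. Two problems. First, Lemma~\ref{lemma:42} concerns the \emph{complex} pair $(\Delta_J\times\Delta_\sigma, P_{\sigma,J})$, not the ober-tropical $\pp_{\sigma,J}$, so it is not the statement you need (and in the paper's logical order it is proved \emph{after} Lemma~\ref{lemma:53} and even uses it). Second, and more seriously, even granting that every facet pair $(F, \pp_{\sigma,J}\cap F)$ of the boundary is an unknotted $(B^4,B^2)$, the boundary $S^4$ is a union of several such facets, and unknottedness of the individual pieces does not by itself give unknottedness of the glued sphere pair --- assembling compatible unknotting homeomorphisms across a $4$-dimensional sphere pair is precisely the step that the failure of Proposition~\ref{prop:unknotted}(1) at $q=4$ blocks. (The dimension count in your phrasing also does not line up: the boundary of a proper $(B^4,B^2)$ pair is an $(S^3,S^1)$ pair, not the $(S^4,S^2)$ pair you need.) You also skip the $q=4$ case of the lemma itself, where the paper uses the cone structure of $(\Delta_J\times\Delta_\sigma,\pp_{\sigma,J})$ over its boundary $(S^3,S^1)$, the latter unknotted by Papakiriakopoulos via Corollary~\ref{cor:complement}, together with Proposition~\ref{prop:unknotted}(3).

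The paper's actual route for $q=5$ avoids the boundary sphere pair entirely. It first reduces the cases $\Delta^4_J\times\Delta^1_\sigma$, $\Delta^1_J\times\Delta^4_\sigma$ and the non-maximally-interlacing $\Delta^3_J\times\Delta^2_\sigma$, $\Delta^2_J\times\Delta^3_\sigma$ to products with one trivial factor. In the remaining maximally interlacing case it lists the facets of $\pp_{\sigma,J}$ explicitly, observes that two of the $S$-coordinates can be merged, and exhibits a PL homeomorphism of pairs $(\Delta_J\times\Delta_\sigma,\pp_{\sigma,J})\cong(\Delta_{\bar J}\times\Delta_\sigma,\pp_{\sigma,\bar J})\times[0,1]$, so unknottedness follows from the lower-dimensional case by taking a product with an interval. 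To repair your argument you would need either this explicit product structure or some other hands-on construction of an unknotting homeomorphism of $(S^4,S^2)$; the ``bookkeeping application of the PL unknotting package'' is not available here.
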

\begin{proof}
For $\dim (\Delta_J\times\Delta_\sigma) \le 3$ the statement is trivial. For $\dim (\Delta_J\times\Delta_\sigma) =4$ the 2-ball is the cone over the circle in $S^3$ which is unknotted since its complement is homotopically equivalent to $S^1$ by Corollary~\ref{cor:complement}. For $\dim (\Delta_J\times\Delta_\sigma) =5$ the cases $\Delta_J^4 \times \Delta_\sigma^1$ and $\Delta_J^1 \times \Delta_\sigma^4$ are easily reduced to just one factor as in the beginning of the proof of Proposition \ref{lemma:42}. Similar reduction happens for not maximally interlacing $\Delta^3_J\times\Delta^2_\sigma$ and $\Delta^2_J\times\Delta^3_\sigma$.
Thus the only non-trivial case to consider is $\Delta^3_J\times\Delta^2_\sigma$ where $J$ and $\sigma$ are maximally interlacing. The other case $\Delta^2_J\times\Delta^3_\sigma$ is similar.

Let $\sigma$ be a 3-partition $\<I_1, I_2, I_3\>$ and let $J=\{0,1,2,3\}$ split its elements, say, as $0,1\in I_1,\ 2\in I_2,\ 3\in I_3$. We list the maximal faces of $\pp_{\sigma, J}$ (we will omit the $J$ part from the subscript of the faces $S_{IJ}$ and denote the 3 coarsenings of $\sigma$ as $a=\<I_1 \cup I_2, I_3 \>,\ b=\<I_1, I_2 \cup I_3\>,\ c=\<I_3 \cup I_1, I_2\>$):
\begin{align*}
S_{23}\times \Sigma_a, \ S_{03}\times \Sigma_a, \ S_{13}\times \Sigma_a, &   \\
S_{03}\times \Sigma_b, \ S_{13}\times \Sigma_b,  & \ S_{02}\times \Sigma_b, \  S_{12}\times \Sigma_b, \\
& \ S_{02}\times \Sigma_c, \ S_{12}\times \Sigma_c,\  S_{32}\times \Sigma_c.
\end{align*}
\begin{figure}[h]
\centering
\includegraphics[height=45mm]{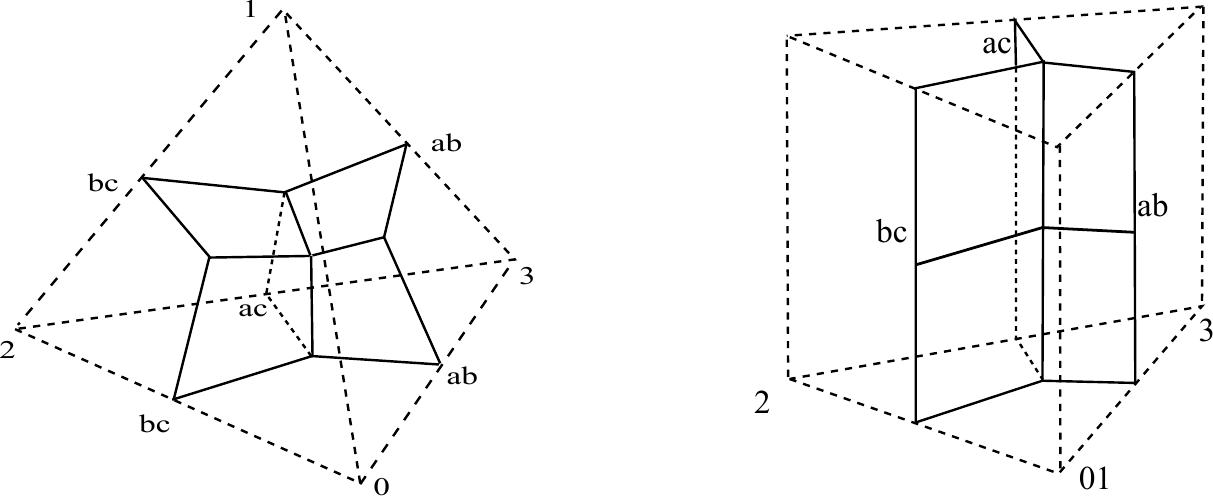}
\caption{The $S$-part of the complex $\pp_{\sigma, J}$ with $\Sigma_{xy}$-fibers labeled.}
\label{fig:53}
\end{figure}
Notice that the face $S_{01}$ is not present and one can combine the faces $S_{0x}$ and $S_{1x}$ into a single cell. Now it is easy to see that the entire complex $\pp_{\sigma, J}$ is a subdivision of the (properly subdivided at the boundary) complex $\pp_{\sigma, \bar J} \times [0,1]$ (see Figure \ref{fig:53}), where in $\bar J$ we treat 0 and 1 as a single element. Moreover this PL homeomorphism extends to the ambient balls, that is, the pair $(\Delta_J\times\Delta_\sigma , \pp_{\sigma, J})$ is homeomorphic to the pair $(\Delta_{\bar J}\times\Delta_\sigma , \pp_{\sigma, \bar J}) \times [0,1]$.
\end{proof}

\begin{proposition}\label{prop:ober_balls}
The ball pair $(\Delta_J\times\Delta_\sigma , \pp_{\sigma, J})$ is unknotted. 
\end{proposition}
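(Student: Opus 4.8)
The plan is to prove that $(\Delta_J\times\Delta_\sigma,\pp_{\sigma,J})$ is an unknotted codimension-$2$ proper ball pair by induction on $q:=\dim(\Delta_J\times\Delta_\sigma)=|J|-|I|+|\sigma|-|W|$ (more precisely on the total dimension of the ambient ball), using Proposition~\ref{prop:unknotted}(2) together with the homotopy and collapsibility data already assembled. The base cases $q\le 5$ are exactly Lemma~\ref{lemma:53}, so we may assume $q\ge 6$. By Lemma~\ref{lemma:proper_balls} the pair is a proper ball pair, and by Lemma~\ref{lemma:complement} the complement $\Delta_J\times\Delta_\sigma\setminus\pp_{\sigma,J}$ has the homotopy type of a circle. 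By Proposition~\ref{prop:unknotted}(2), for $q\ne 4$ it therefore suffices to know two things: that the pair is locally flat, and that its \emph{boundary} sphere pair $\partial(\Delta_J\times\Delta_\sigma)\setminus\partial\pp_{\sigma,J}$ is unknotted.

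For the boundary sphere pair I would argue as follows. The boundary $\partial(\Delta_J\times\Delta_\sigma)$ decomposes as $(\partial\Delta_J\times\Delta_\sigma)\cup(\Delta_J\times\partial\Delta_\sigma)$, and the intersection of $\pp_{\sigma,J}$ with each boundary face $\Delta_{J'}\times\Delta_{\sigma'}$ (for $J'\subseteq J$, $\sigma'\preceq\sigma$ proper faces) is precisely $\pp_{\sigma',J'}$ by the description of the face poset of $\pp_{\sigma,J}$ given before \eqref{eq-dim-formula-2}. Each of these is an unknotted ball pair of strictly smaller total dimension by the inductive hypothesis, and they are glued compatibly along lower strata, again all unknotted by induction; hence $\partial\pp_{\sigma,J}\subset\partial(\Delta_J\times\Delta_\sigma)$ is built from unknotted ball pairs glued along unknotted ball pairs. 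Invoking Proposition~\ref{prop:unknotted}(1) (valid for $q-1\ge 5$, i.e.\ exactly in the range $q\ge 6$ we are in) it then suffices to check that $\partial(\Delta_J\times\Delta_\sigma)\setminus\partial\pp_{\sigma,J}$ is homotopy equivalent to a circle and that the sphere pair is locally flat. The homotopy statement is Corollary~\ref{cor:complement}. Local flatness at interior points of top-dimensional strata is automatic since $\pp_{\sigma,J}$ is locally a linear subspace there; at lower strata it follows because, by the inductive unknottedness of the boundary pieces, a neighbourhood of each such point is a cone on an unknotted sphere pair, and Proposition~\ref{prop:unknotted}(3) identifies such a cone with the standard ball pair, which gives the required local flatness chart.

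Assembling: for $q\ge 6$ the boundary sphere pair is locally flat with complement a circle, hence unknotted by \ref{prop:unknotted}(1); the ambient ball pair is proper and locally flat with complement a circle and unknotted boundary, hence unknotted by \ref{prop:unknotted}(2); together with the base cases $q\le 5$ from Lemma~\ref{lemma:53} this completes the induction. Note that $q=4$ is never an obstruction here because the only $4$-dimensional ambient ball is handled directly in Lemma~\ref{lemma:53} (as a cone over an unknotted circle in $S^3$) and does not arise as an inductive step with $q\ge 6$.

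The main obstacle I anticipate is not any single homotopy computation — those are already done in Lemmas~\ref{lemma:complement} and Corollary~\ref{cor:complement} — but rather the bookkeeping needed to verify \emph{local flatness} of $\pp_{\sigma,J}$ along its lower strata, and the compatibility of the inductively-supplied unknotting homeomorphisms of the boundary faces so that they actually glue to give a well-defined trivialization of the boundary sphere pair. This is where Proposition~\ref{prop:unknotted}(4) is essential: after unknotting the highest-codimension strata first, one extends the homeomorphisms outward over successive strata using (4), always matching the extension already chosen on the subball; one must check that the poset of strata of $\pp_{\sigma,J}$ (equivalently, of the dual cyclic polytope $C^\vee_{\sigma,J}$) is such that this stratum-by-stratum extension has no obstruction, i.e.\ that links of strata are themselves unknotted sphere pairs of lower dimension, which is again the inductive hypothesis applied to the corresponding smaller products $\Delta_{J'}\times\Delta_{\sigma'}$.
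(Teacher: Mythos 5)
Your proposal is essentially correct and runs parallel to the paper's proof (same induction on dimension, same base case via Lemma~\ref{lemma:53}, same use of Corollary~\ref{cor:complement} and Proposition~\ref{prop:unknotted}(1) for the boundary sphere pair, same local-product-with-a-cone argument for flatness along the ambient strata $\Delta_{J'}\times\Delta_{\sigma'}$). The one genuine difference is the step from the unknotted boundary sphere pair to the unknotted ball pair: you invoke part~(2), whereas the paper observes that $(\Delta_J\times\Delta_\sigma,\pp_{\sigma,J})$ is literally a \emph{cone pair} over its boundary and applies part~(3). The cone route is strictly cheaper here: once the boundary sphere pair is standard, the cone over it is the standard ball pair, so unknottedness \emph{and} local flatness of the ball pair come for free, and Lemma~\ref{lemma:complement} is not needed at all for the ober-tropical side (only Corollary~\ref{cor:complement} is). Your route through part~(2) instead requires local flatness of the ball pair as a hypothesis, and your justification of it (``locally a linear subspace'' on top-dimensional cells, induction on ambient boundary strata) does not cover points of $\pp_{\sigma,J}$ lying on lower-dimensional cells of the polyhedral complex in the \emph{interior} of $\Delta_J\times\Delta_\sigma$ --- e.g.\ the central vertex of the hexagon $\pp_0$ for $n=2$ --- where the complex is not locally linear. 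That is a small but real gap in your version, and it is precisely what the paper's cone argument sidesteps; note that part~(2) is the tool reserved for the complex pair $P_{\sigma,J}$ (Proposition~\ref{prop:complex_balls}), which has no cone structure, while the polyhedral $\pp_{\sigma,J}$ does.
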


\begin{proof}
We will do the induction on dimension of the strata $\Delta_J\times\Delta_\sigma$.  For $\dim(\Delta_J\times\Delta_\sigma)\le 5$ we have Lemma \ref{lemma:53}.
In general, $(\Delta_J\times\Delta_\sigma , \pp_{\sigma, J})$ is a cone pair over its boundary. So that its unknotness (and hence the local flatness) follows from unknotness of the boundary pair $(\partial (\Delta_J\times\Delta_\sigma), \partial \pp_{\sigma, J})$ by applying part (3) of Proposition~\ref{prop:unknotted}. To show that the boundary pair $(\partial (\Delta_J\times\Delta_\sigma ), \partial \pp_{\sigma, J})$ is unknotted we apply part (1) of Proposition \ref{prop:unknotted}. The complement is homotopically equivalent to the circle by Corollary~\ref{cor:complement}, the local flatness within the relative interiors of the strata of $\Delta_J\times\Delta_\sigma$ is by induction.

It only remains to show that $\pp_{\sigma, J}$ is locally flat at a stratum $\Delta_{J'}\times\Delta_{\sigma'} \subseteq \Delta_J\times\Delta_\sigma $. Note that each face $S_{IJ}$ of $S$ at the stratum $\Delta_{J'}$, $I\subseteq J'\subseteq J$, is locally the product  $(S_{IJ}\cap \Delta_{J'}) \times \Cone_{JJ'}$, where $\Cone_{JJ'}$ is the relative cone of $\Delta_J$ at its face $\Delta_{J'}\subseteq \Delta_J$. Similarly, any face of $\Sigma_\sigma$ is locally a product $(\Sigma_\sigma \cap \Delta_{\sigma'}) \times \Cone_{\sigma\sigma'}$. Thus the entire pair $(\Delta_J\times\Delta_\sigma , \pp_{\sigma, J})$ at $\Delta_{J'}\times\Delta_{\sigma'}$ is locally the product of the pair $(\Delta_{J'}\times\Delta_{\sigma'}, \pp_{\sigma', J'})$ with the cone $\Cone_{JJ'} \times \Cone_{\sigma\sigma'}$ and the local flatness follows by induction from a lower dimension.
\end{proof}

\begin{lemma}\label{lemma:42}
Let $\dim (\Delta_J\times\Delta_\sigma) \le 4$. The ball pair $(\Delta_J\times\Delta_\sigma , P_{\sigma, J})$ is unknotted. 
\end{lemma}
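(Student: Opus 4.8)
The plan is to mirror the structure already used for the ober-tropical pair-of-pants (Proposition~\ref{prop:ober_balls} and Lemma~\ref{lemma:53}), since $P_{\sigma,J}$ and $\pp_{\sigma,J}$ share the same face poset (both dual to a cyclic polytope $C^\vee_{\sigma,J}$, by Proposition~\ref{prop:stanley}) and, by Lemma~\ref{lemma:complement} and Corollary~\ref{cor:complement}, the same homotopy type of complement. We induct on $d:=\dim(\Delta_J\times\Delta_\sigma)$. The cases $d\le 3$ are trivial (any proper ball pair of codimension $2$ in ambient dimension $\le 3$ is unknotted), so the real content is $d=4$, i.e.\ a codimension-$2$ ball pair $(B^4,B^2)$.

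First I would dispose of the reducible configurations. If one factor is $0$- or $1$-dimensional, or if $(\sigma,J)$ is \emph{not} maximally interlacing, then exactly as at the start of the proof of Lemma~\ref{lemma:53} the pair splits off an interval factor: $(\Delta_J\times\Delta_\sigma,P_{\sigma,J})\cong(\Delta_{\bar J}\times\Delta_\sigma,P_{\sigma,\bar J})\times[0,1]$ (or the analogue with $\sigma$ coarsened), and a product of an unknotted lower-dimensional pair with $[0,1]$ is unknotted by a standard PL argument (e.g.\ Proposition~\ref{prop:unknotted}(4) applied to the product structure), invoking the inductive hypothesis on the lower-dimensional factor. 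This leaves the genuinely $4$-dimensional, maximally interlacing case: $\Delta^2_J\times\Delta^2_\sigma$ with $|J|=3$ and $\sigma=\<I_1,I_2,I_3\>$ a $3$-partition each of whose parts meets $J$.

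For that case I would appeal to the ``by hand'' $(B^4,B^2)$ criterion that the paper announces it will prove (Proposition~\ref{lemma:42}'s companion, referred to above as the hand-made analogue of Proposition~\ref{prop:unknotted}(2)): a proper locally flat ball pair $(B^4,B^2)$ whose boundary sphere pair $(S^3,S^1)$ is unknotted is itself unknotted provided its complement $B^4\setminus B^2$ is homotopy equivalent to $S^1$. The complement condition is exactly Lemma~\ref{lemma:complement}. For the boundary: $(\partial(\Delta_J\times\Delta_\sigma),\partial P_{\sigma,J})$ is a sphere pair $(S^3,S^1)$ whose complement is homotopy equivalent to a circle by Corollary~\ref{cor:complement}, and it is locally flat (within the relative interiors of the boundary strata this is the $d\le 3$ case handled inductively, and at the vertices one checks, just as in Proposition~\ref{prop:ober_balls}, that the pair is locally a product of a lower-dimensional pair with a cone, using the local product structure of $S$ along $\Delta_{J'}$ and of $\Sigma_\sigma$ along $\Delta_{\sigma'}$); hence by Proposition~\ref{prop:unknotted}(1) (the $q=3$ Papakyriakopoulos case) the boundary sphere pair is unknotted. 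Feeding these two inputs into the hand-made $(B^4,B^2)$ proposition finishes the induction.

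The main obstacle is precisely the step that cannot simply be quoted from Proposition~\ref{prop:unknotted}: establishing local flatness of $P_{\sigma,J}$ at its lower strata and, simultaneously, the hand-made unknottedness criterion for $(B^4,B^2)$ in PL. Local flatness is where the combinatorics of the cyclic-polytope face lattice has to be turned into a genuine product neighborhood, exactly as is done for $\pp_{\sigma,J}$ in Proposition~\ref{prop:ober_balls}; the argument should be identical in form because only the face poset and the local product structure of $S$ and $\Sigma$ are used, not the specific embedding of $P_{\sigma,J}$. (In fact, once local flatness is known one could alternatively bypass the hand-made criterion by the standard route: cap off $\partial B^4\setminus B^2$ with a standard ball pair to get an unknotted $(S^4,S^2)$ via Proposition~\ref{prop:unknotted}(1) for $q=4$ in TOP, then remove a standard ball pair — but since the paper insists on staying in PL for $q\ne 4$, the hand-made $(B^4,B^2)$ argument is the intended device, and I would present the proof in the form above so that it plugs directly into Proposition~\ref{lemma:42} as stated.)
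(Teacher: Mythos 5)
Your treatment of the degenerate cases (one factor of dimension $\le 1$, or $(\sigma,J)$ not maximally interlacing) is consistent with the paper, but the heart of the lemma --- the hexagon $P_0\subset\Delta^2_0\times\Delta^2$, the one genuinely $4$-dimensional maximally interlacing case --- is delegated to a ``hand-made $(B^4,B^2)$ criterion'' that does not exist and cannot be assumed. The parenthetical ``(see Proposition \ref{lemma:42})'' in the discussion following Proposition \ref{prop:unknotted} does not announce a general unknottedness criterion for $(B^4,B^2)$ pairs; it points to Lemma \ref{lemma:42} itself, i.e.\ to a direct geometric proof for the \emph{specific} pair at hand, precisely because no such general criterion is available in PL. Part (2) of Proposition \ref{prop:unknotted} excludes $q=4$ because its proof caps off the boundary to form a sphere pair $(S^4,S^2)$ and then invokes part (1), which for $q=4$ is only known in TOP (Freedman). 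The statement you want --- a proper locally flat PL pair $(B^4,B^2)$ with unknotted boundary and complement homotopy equivalent to $S^1$ is PL unknotted --- is essentially the $4$-dimensional PL unknotting problem and is not something one proves ``by hand'' in general. So your argument is circular at exactly the point where all the work lies; your closing parenthetical (pass through TOP via Freedman) exits the PL category and does not return to it.

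What the paper actually does for this case is entirely different in character: it represents the complement $N_0$ of a regular neighborhood of $P_0$ in $\Delta^2_0\times\Delta^2$ as a union of three $4$-cells $P_{\hat g}$ indexed by the position of a ghost variable $g=-z_0-z_1-z_2$ in the cyclic order, and constructs an explicit PL homeomorphism $N_0\cong SP_0\times[0,1]$ (with $SP_0$ the circle bundle over $P_0$ in the boundary of the regular neighborhood) by exhibiting each $P_{\hat g}$ as a product of a ``wedge'' $W_{\hat g}=SP_0\cap P_{\hat g}$ with $[0,1]$, compatibly with the gluings; this product structure on the complement of the regular neighborhood is what certifies unknottedness. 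To repair your proof you would need to supply this (or an equivalent) explicit product structure rather than cite a criterion; the remaining ingredients you list (Lemma \ref{lemma:complement}, Corollary \ref{cor:complement}, Papakyriakopoulos for the boundary $(S^3,S^1)$) are correct but insufficient on their own.
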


\begin{proof}
If $\dim (\Delta_J\times\Delta_\sigma) \le 2$, the statement is trivial. 
For $\dim (\Delta_J\times\Delta_\sigma) = 3$, necessarily $\Delta_J\times\Delta_\sigma$ is a prism $\Delta^2\times [0,1]$ and 
$P_{\sigma, J}$ is the a segment $S\times \{\frac12\}$ where $S\subset \Delta^2$ connects the midpoints of two sides of the triangle.

We prove the case $\dim (\Delta_J\times\Delta_\sigma) =4$.
In the case $\Delta_J^3 \times \Delta_\sigma^1$ there are 4 elements in $J$  and $\sigma$ is a 2-partition. The 2-ball $P_{\sigma, J}$ is the product $v\times R_\sigma$ where $v$ is the center of the interval $\Delta^1_\sigma$  and $R_\sigma$ is the real component of the 2-dimensional pair-of pants, where the elements in $J$ carry plus or  minus signs according to which of the two parts of $\sigma$ they belong. It is easy to trace the image of $R_\sigma$ in $\Delta^3_\sigma$ as a 2-disc separating the vertices according to $\sigma$. The case $\Delta_J^1 \times \Delta_\sigma^3$ is similar. If $\sigma$ and $J$ are both of size 3 and {\em not} maximally interlacing then the ball $P_{\sigma, J}$ in $\Delta_J^2 \times \Delta_\sigma^2$ is the product of two intervals each connecting midpoints of a pair of edges in its own triangle.

The only nontrivial case is the classical one-(complex)-dimensional pair-of-pants: the hexagon $P_0$ in the product of the 2 triangles $\Delta^2_0\times \Delta^2$. 
Similar to the proof of the complement being homotopically equivalent to $S^1$ in Lemma \ref{lemma:complement}, we base our argument on viewing the complement of the $1$-dimensional pair-of-pants in $(\C^*)^2$ as the $2$-dimensional pair-of-pants. 
More precisely we represent the complement $N_0$ of a regular neighborhood of $P_{0}$ in $\Delta^2 \times \Delta^2_0$ as the union of three cells $P_{\hat g}$, where the position of the ghost $g$ between the cyclically ordered $\{0,1,2\}$ is recorded, i.e.~$\hat g\in\{\{g,0,1,2\}, \{0,g,1,2\}, \{0,1,g,2\}\}$, see also Figure~\ref{fig:coamoeba2}. (As a reminder for clarity: recall that there is a second hexagon which arises from the other cyclic ordering $\{0,2,1\}$ and which also sits inside three $4$-balls giving the total of six strata as discussed in the $n=3$ case of Figure~\ref{fig:torus-strat}.)
\renewcommand\sidecaptionsep{1cm}
\begin{SCfigure}[][h]
\centering
\includegraphics[height=48mm]{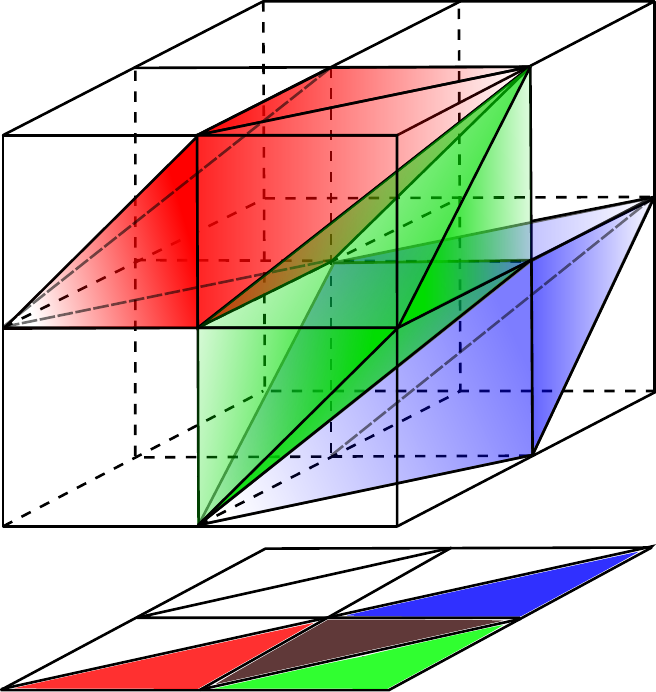}
\caption{Thinking of $\Delta^2 \times \Delta^2_0$ as the union of three 4-dimensional cells $P_{\hat g}$ which project to these octahedra in the argument factor of $(\C^*)^3$.}
\label{fig:coamoeba2}
\end{SCfigure}

The boundary of the regular neighborhood of $P_0$ decomposes naturally as $SP_0\cup ((\partial P_0)\times B^2)$ where $SP_0$ is a trivial circle bundle over $P_0$ and $B^2$ is a disc.
To show that the proper ball pair $P_0 \subset \Delta^2_0\times \Delta^2$ is unknotted, it suffices to show that there exists a homeomorphism
\\[0mm]
\noindent
\begin{minipage}{\linewidth}
\begin{minipage}{0.6\textwidth} 
$N_0\stackrel\sim\to SP_0\times [0,1]$ where we like to require the homeomorphism to restrict to the identity on $SP_0\times\{ 0\}$ in the sense that we have $SP_0$ naturally be part of the boundary of $N_0$.

We define $W_{\hat g}:=SP_0\cap P_{\hat g}$ and will show $W_{\hat g}$ is a three-ball for each $\hat g$. 
Since $N_0$ is glued from the three $4$-balls $P_{\hat g}$, in order to find the required homeomorphism
$N_0\stackrel\sim\to SP_0\times [0,1]$
we may instead show for each $\hat g$ that $P_{\hat g}$ is homeomorphic to $W_{\hat g}\times[0,1]$ in such a way that their gluing respects the product decomposition.
We will call $W_{\hat g}$ a {\bf wedge}. The boundary of each wedge decomposes into two squares and one hexagon and the squares are where a wedge meets the other wedges, that is, the wedges have identical shape and glue as shown in Figure~\ref{fig:glue-wedges}.
\end{minipage}
\hspace{0.05\textwidth}
\begin{minipage}{0.35\textwidth}
\begin{figure}[H]
\includegraphics[height=48mm]{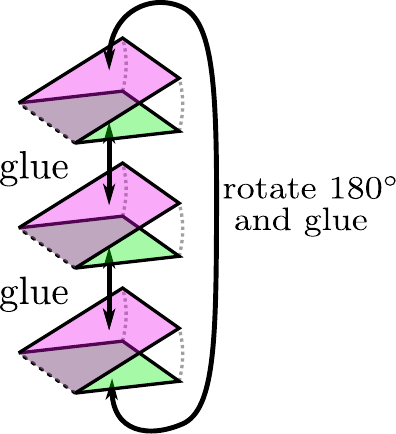}
\captionsetup{width=.95\linewidth}
\caption{The three $W_{\hat g}$ and how they glue to give $SP_0$.}
\label{fig:glue-wedges}
\end{figure}
\end{minipage}
\end{minipage}

We now construct the desired homeomorphism $P_{\hat g}\stackrel\sim\to W_{\hat g}\times[0,1]$. 
We start off by analyzing the structure of the polytopes $P_{\hat g}$. 
Recall that the faces of $P_{\hat g}$ are indexed by graphical code, that is by interlacing subsets of the circle with four distinct points on it and the arcs between the points labeled according to ${\hat g}$ in clockwise order.
In particular, there are $8$ facets in each $P_{\hat g}$, given by unmarking either a vertex or an arc while everything else is marked. 
Each $P_{\hat g}$ has a special facet given by unmarking the ghost and this facet is $W_{\hat g}$ because unmarking the ghost means setting $g=0$, i.e. $z_0+z_1+z_2=0$ however the phase of $g$ is still recorded having the effect of $W_{\hat g}$ being inside $SP_0$ rather than $P_0$ itself. We pictorially explain in Figure~\ref{wedge-graphical} that the intersection of two $P_{\hat g}$ is a facet of each.
\begin{figure}[h] 
\includegraphics[height=20mm]{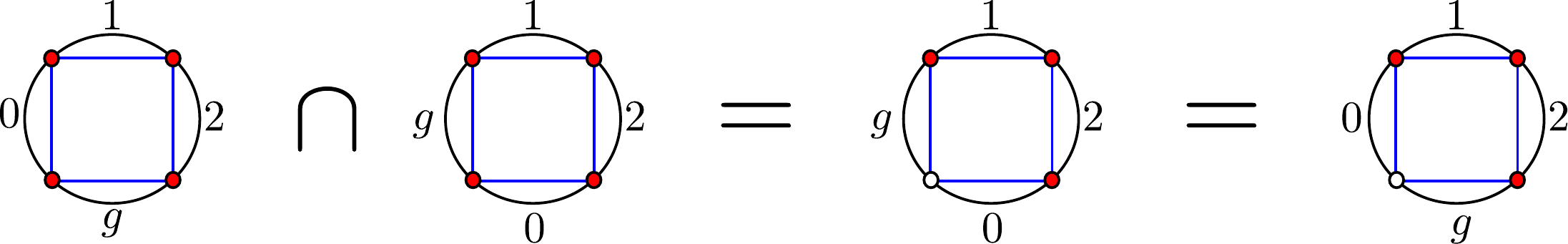}
\caption{Computing $P_{\{g,0,1,2\}}\cap P_{\{0,g,1,2\}}$ in graphical code.}
\label{wedge-graphical}
\end{figure}
Indeed, note that when a vertex is unmarked, the adjacent labels don't have a preferred order since they lie in the same $I_j$.
In summary, each $P_{\hat g}$ has three types of facets: 1) $W_{\hat g}$, 2) two facets that connect it with the two other $P_{\hat g}$ and 3) five further facets characterized by that the ghost and each of its adjacent vertices are marked.
Nonetheless, all eight facets of each $P_{\hat g}$ are isomorphic and look like the polytope in the center of Figure~\ref{fig:wedge}.

An important fact is that all three $W_{\hat g}$ meet in a disjoint union of three edges. We call them {\bf special edges} and their graphical code is given on the right in Figure~\ref{fig:wedge} (and the edges are bold). 
The figure shows $W_{\{g,0,1,2\}}$ in the center and explains on the right how $W_{\{g,0,1,2\}}$ gets identified with the wedge that we introduced before, i.e.~with a $3$-ball whose boundary decomposes in two squares and one hexagon. The original hexagonal face on top of $W_{\{g,0,1,2\}}$ becomes a square so that the special edges it contains lie opposite to one another - similarly for the bottom face. The remaining part of the boundary becomes a hexagon in the process. 
Convincing ourselves that the illustration on the left of Figure~\ref{fig:wedge} accurately depicts how $W_{\{g,0,1,2\}}$, $W_{\{g,0,1,2\}}$ and $W_{\{g,0,1,2\}}$ glue together, we have thus verified the structure of $SP_0$ as glued by wedges claimed in Figure~\ref{fig:glue-wedges}. 
\begin{figure}[h]
\includegraphics[height=48mm]{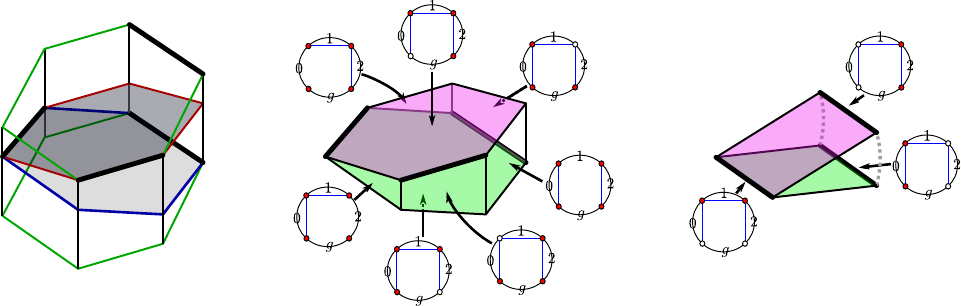}
\captionsetup{width=.95\linewidth}
\caption{The middle shows $W_{\{g,0,1,2\}}$ with each facet labeled by its graphical code and special edges marked bold. The right shows the homeomorphic version of it as a wedge with the special edges labeled by their graphical code. The left shows $W_{\{0,g,1,2\}}$ (top), $W_{\{g,0,1,2\}}$ (middle) and $W_{\{0,1,g,2\}}$ (bottom) and the top and bottom face of the resulting hexagonal prism shall be identified by translation to give $SP_0$, cf.~Figure~\ref{fig:glue-wedges}.}
\label{fig:wedge}
\end{figure}

In the following, we will use the terminology of ``square'' (respectively ``hexagon'', etc.) either to refer to actual squares (or hexagons, etc.) or to polyhedral complexes that glue to 2-cells whose boundary decomposes into 4 (or 6) intervals. For instance, the hexagonal top face in the middle of Figure~\ref{fig:wedge} is a square in the sense that its homeomorphic image in the wedge is a square. Similarly, the complement of the two actually hexagonal faces of the polytope in the middle is a ``hexagon'' in the sense that its homeomorphic image in the wedge is a hexagon.

To finish the proof, we are going to exhibit for each $P_{\hat g}$ a second copy $W'_{\hat g}$ of a wedge in its boundary disjoint from $W_{\hat g}$. 
We will show that each $P_{\hat g}$ is PL homeomorphic to $W_{\hat g}\times [0,1]$ where $W_{\hat g}\times\{0\}$ is the original $W_{\hat g}$ and $W_{\hat g}\times\{1\}$ is the extra copy; furthermore the gluing of the $P_{\hat g}$ will respect the product structure. 

We first observe that the ``opposite end'' of $N_0$, complementary to $SP_0$, is a Moebius strip. 
To be precise, the union of faces of all $P_{\hat g}$ that do not meet any $W_{\hat g}$ forms the strip in Figure~\ref{fig:Moebius} where the left and right non-bold boundary edges get identified according to the endpoint labels.
\begin{figure} 
\includegraphics[width=.9\linewidth]{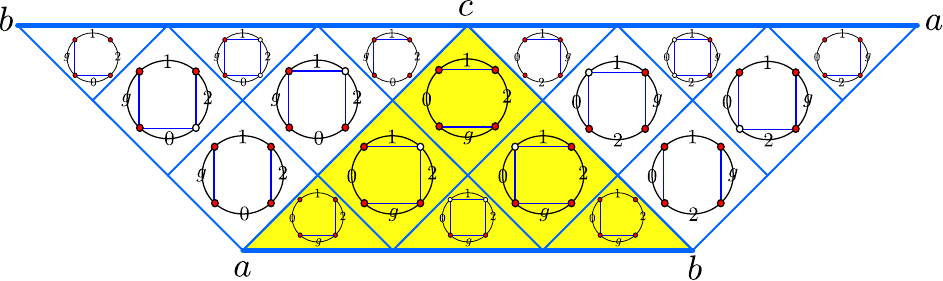}
\captionsetup{width=.8\linewidth}
\caption{The Moebius strip of cells in $N_0$ that are disjoint from $SP_0$.}
\label{fig:Moebius}
\end{figure}
The strip decomposes into three triangles, each of which lies in one $P_{\hat g}$. 
The center triangle in the figure (yellow) is contained in $P_{\{g,0,1,2\}}$, the left one in $P_{\{0,g,1,2\}}$ and the right one in $P_{\{0,1,g,2\}}$. 
Focusing on a single $P_{\hat g}$ now, we next need to understand better its facets that meet the corresponding triangle of the strip. 
Recall that the facets fall into three groups. 
The group characterized by having the ghost and its adjacent vertices marked consists of five facets. 
To understand how these glue together, consider the left of Figure~\ref{fig:blob-top} that shows these facets partially glued to a hexagonal tower. 
An additional gluing has to be carried out which affects the back side of the tower, a square shown in the middle of Figure~\ref{fig:blob-top}. The square decomposes into two triangles along the (bold) diagonal. The dashed triangle gets glued to the solid triangle resulting in a (blue) triangle which is precisely the triangle that is part of the Moebius strip. The result of the gluing is shown on the right in Figure~\ref{fig:blob-top}, let's call it the {\bf blob}.
\begin{figure} 
\includegraphics[width=.8\linewidth]{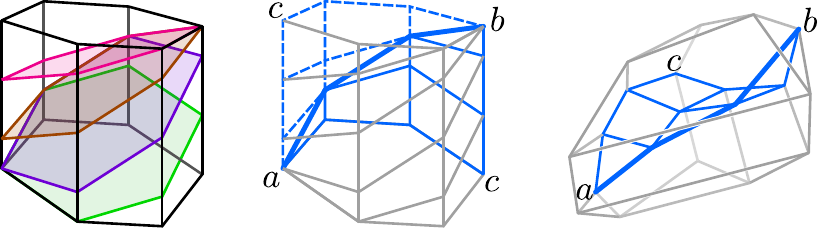}
\caption{The five facets of $P_{\{g,0,1,2\}}$ that each meet the (blue) triangular part of the Moebius strip in a union of 2-cells. The situation looks similar for the other $P_{\hat g}$ after changing the labels $a,b,c$ appropriately.}
\label{fig:blob-top}
\end{figure}

We next place a wedge around the triangle inside the blob. Figure~\ref{fig:blob-top-nbd} depicts how this is going to work. 
The two squares of the wedge are the intersection of the wedge with the boundary of the blob and depicted on the left. The hexagonal face of the wedge is not shown to not overburden the illustration. 
It \emph{wraps around} the blue triangle, so that the resulting wedge - let us call it $W'_{\hat g}$ - contains the blue triangle in the way depicted in the middle of Figure~\ref{fig:blob-top-nbd}. 
\begin{figure} 
\includegraphics[width=.8\linewidth]{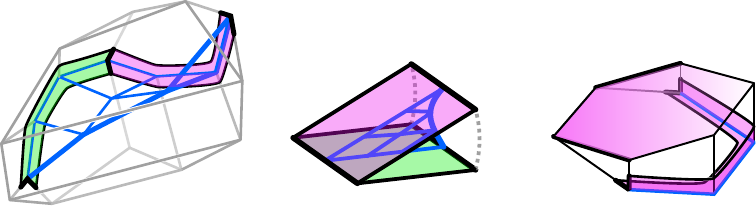}
\caption{The wedge $W'_{\hat g}$ in the blob, containing the blue triangle (left and middle). The right shows one of three 3-cells that are the intersection of two $P_{\hat g}$'s.}
\label{fig:blob-top-nbd}
\end{figure}
Most importantly, we can decompose the blob into the wedge $W'_{\hat g}$ and $H\times[0,1]$ where $H$ is the hexagonal face of $W'_{\hat g}$ (the one that is not depicted). 
The next important observation is that the union of those faces of the blob (right of Figure~\ref{fig:blob-top}) which do \emph{not} meet the blue triangle is precisely the complement of the two hexagonal facets in the boundary of $W_{\hat g}$. In other words, the union of the blob and $W_{\hat g}$ (with their natural gluing) is a copy of two wedges ($W_{\hat g}$ and $W'_{\hat g}$) glued along $H\times [0,1]$ so that $W_{\hat g}$ meets $H\times [0,1]$ in $H\times \{0\}$ and $W'_{\hat g}$ meets $H\times [0,1]$ in $H\times \{1\}$. 
Next recall the facets of $P_{\hat g}$ that are also contained in the neighboring $4$-cells. 
We depict one of these on the right of Figure~\ref{fig:blob-top-nbd} with the purpose of illustrating how it contains two (pink) squares: one that it has in common with $W_{\hat g}$ (in hexagonal form) and the other being the matching square of $W'_{\hat g}$. 
We observe that we may view the facet as $S\times [0,1]$ where $S\times \{0\}$ is the pink (hexagonal) square of $W_{\hat g}$ and $S\times \{1\}$ is the pink square of $W'_{\hat g}$. 

Note that we have just given a PL homeomorphism of the boundary of $P_{\hat g}$ and the boundary of $W_{\hat g}\times [0,1]$. Any such homeomorphism can be extended to the interiors (e.g. by taking stars), so we have found a homeomorphism
$P_{\hat g} \stackrel\sim\rightarrow W_{\hat g}\times [0,1]$ for each $\hat g$. By construction, these homeomorphisms are compatible with gluing the three $P_{\hat g}$ so that we have constructed the desired PL homeomorphism 
$N_0\stackrel\sim\rightarrow SP_0\times [0,1]$.
\end{proof}

\begin{proposition}\label{prop:complex_balls}
The ball pair $(\Delta_J\times\Delta_\sigma , P_{\sigma, J})$ is unknotted. 
\end{proposition}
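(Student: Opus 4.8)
The plan is to run the same induction as in the proof of Proposition~\ref{prop:ober_balls}, with the ober-tropical base case Lemma~\ref{lemma:53} replaced by the complex base case Lemma~\ref{lemma:42} and with an extra treatment of the critical dimension~$5$. Induct on $d:=\dim(\Delta_J\times\Delta_\sigma)=|J|+|\sigma|-2$. For $d\le 4$ the pair is unknotted by Lemma~\ref{lemma:42}. For $d\ge 6$ the argument is verbatim that of Proposition~\ref{prop:ober_balls}: $(\Delta_J\times\Delta_\sigma,P_{\sigma,J})$ is a proper ball pair by Lemma~\ref{lemma:proper_balls} and, like its ober-tropical counterpart, is a cone pair over its boundary sphere pair $(\partial(\Delta_J\times\Delta_\sigma),\partial P_{\sigma,J})=(S^{d-1},S^{d-3})$; by Corollary~\ref{cor:complement} the complement $S^{d-1}\setminus S^{d-3}$ is homotopy equivalent to a circle, local flatness of this sphere pair in the relative interiors of its strata is furnished by the inductive hypothesis, and since $d-1\ge 5\ne 4$ Proposition~\ref{prop:unknotted}(1) shows the boundary sphere pair is unknotted; then Proposition~\ref{prop:unknotted}(3) shows that the cone pair $(\Delta_J\times\Delta_\sigma,P_{\sigma,J})$ is unknotted. (One may instead feed the same data together with Lemma~\ref{lemma:complement} into Proposition~\ref{prop:unknotted}(2), which avoids invoking the cone-pair structure at the price of requiring local flatness of the top pair as a separate input.)

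To keep the induction running one must check that $P_{\sigma,J}$ is locally flat in $\Delta_J\times\Delta_\sigma$ at every boundary stratum $\Delta_{J'}\times\Delta_{\sigma'}$ with $(\sigma',J')\prec(\sigma,J)$. As in Proposition~\ref{prop:ober_balls}, near such a stratum $\Delta_J$ splits as $\Delta_{J'}\times\Cone_{JJ'}$, $\Delta_\sigma$ as $\Delta_{\sigma'}\times\Cone_{\sigma\sigma'}$, and $P_{\sigma,J}$ correspondingly as $P_{\sigma',J'}\times\Cone_{JJ'}\times\Cone_{\sigma\sigma'}$ (this product structure near boundary strata is precisely what it means for $P$ to be a real oriented blow-up of $\C\PP^{n-1}$). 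Hence the pair at $\Delta_{J'}\times\Delta_{\sigma'}$ is locally the product of the lower-dimensional pair $(\Delta_{J'}\times\Delta_{\sigma'},P_{\sigma',J'})$ with the cone $\Cone_{JJ'}\times\Cone_{\sigma\sigma'}$, and local flatness follows by the inductive hypothesis; in the relative interior of the top stratum $P^\circ$ is a smooth complex hypersurface, hence locally flat there.

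The dimension left uncovered is $d=5$, where the boundary sphere pair would be $(S^4,S^2)$ and Proposition~\ref{prop:unknotted}(1) is unavailable. I would settle this case by hand in the spirit of Lemma~\ref{lemma:53}: every stratum with $|J|=2$, or with $\sigma$ a $2$-partition, or with $(|J|,|\sigma|)\in\{(3,4),(4,3)\}$ but \emph{not} maximally interlacing, has a point fibre under one of the two projections $\mu_1,\mu_2$ (respectively $\mu_1(P_{\sigma,J})$ is a point of $\Delta_J$, $\mu_2(P_{\sigma,J})$ is the point $v_\sigma$ of $\Delta_\sigma$, or both) and is therefore a product of two codimension-one proper locally flat ball pairs, hence a standard $B^{5,3}$ pair; the residual essential cases are the maximally interlacing $\Delta^3_J\times\Delta^2_\sigma$ and $\Delta^2_J\times\Delta^3_\sigma$, which are small enough to analyze explicitly at the level of the cell complexes $P_{\sigma,J}$, exactly as is done for $\pp_{\sigma,J}$ in Lemma~\ref{lemma:53}.

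The hard part is precisely this $d=5$ step: since $(S^4,S^2)$ lies in the dimension that Proposition~\ref{prop:unknotted}(1) excludes (the ``Freedman dimension''), there is no formal shortcut and one must verify unknottedness of the relevant polyhedral $(B^5,B^3)$ pairs directly. The remaining points are organizational: the induction must be ordered so that at dimension $d$ the local flatness of the boundary sphere pair $(S^{d-1},S^{d-3})$ needed for Proposition~\ref{prop:unknotted}(1) is already available from the lower-dimensional cases, and one must justify that $(\Delta_J\times\Delta_\sigma,P_{\sigma,J})$ genuinely is a cone pair over its boundary -- a fact used only tacitly for $\pp$ in Proposition~\ref{prop:ober_balls}.
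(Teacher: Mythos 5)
Your treatment of the dimensions $d\le 4$ and $d\ge 6$ is essentially sound and, via the parenthetical alternative (Proposition~\ref{prop:unknotted}(2) plus local flatness), matches the paper's actual route: the paper establishes local flatness not stratum-by-stratum but all at once, by observing that the map $\phi(z_0,\dots,z_n)=z_0+\dots+z_n$ on $\Delta_J\times\Delta_\sigma$ has closed balls as fibers over small $|t|$, which exhibits a product neighborhood of $P_{\sigma,J}=\phi^{-1}(0)$. Your primary route through the cone-pair structure and Proposition~\ref{prop:unknotted}(3) is the shakier option, since unlike $\pp_{\sigma,J}$ (a polyhedral subcomplex of $\dsd\Delta_J\times\dsd\Delta_\sigma$, visibly coned from a barycenter) the complex stratum $P_{\sigma,J}$ has no evident simultaneous cone point with its ambient ball; you flag this yourself, so it is not fatal, but you should commit to the part~(2) route.

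The genuine gap is $d=5$, which you correctly identify as the hard case but then do not actually resolve. Your plan is to verify the maximally interlacing $\Delta^3_J\times\Delta^2_\sigma$ and $\Delta^2_J\times\Delta^3_\sigma$ cases ``exactly as is done for $\pp_{\sigma,J}$ in Lemma~\ref{lemma:53}'', but that argument is intrinsically polyhedral: it lists the maximal cells of $\pp_{\sigma,J}$, merges $S_{0x}$ with $S_{1x}$, and recognizes the complex as $\pp_{\sigma,\bar J}\times[0,1]$. None of this transfers to $P_{\sigma,J}$, which is not a subcomplex of $\dsd\Delta_J\times\dsd\Delta_\sigma$; a direct analysis of the complex $(B^5,B^3)$ pair would be at least as laborious as the wedge decomposition in Lemma~\ref{lemma:42}. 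The paper instead sidesteps any by-hand work in dimension $5$ with a comparison argument: each facet $F$ of $\partial(\Delta_J\times\Delta_\sigma)$ contains the unknotted ball pair $(F,P_{\sigma,J}\cap F)$ (by Lemma~\ref{lemma:42}) as well as the unknotted ball pair $(F,\pp_{\sigma,J}\cap F)$, and the two families of ball pairs are glued according to the \emph{same} face poset; since the assembled ober-tropical sphere pair $(\partial(\Delta_J\times\Delta_\sigma),\partial\pp_{\sigma,J})$ is unknotted by Lemma~\ref{lemma:53}, so is $(\partial(\Delta_J\times\Delta_\sigma),\partial P_{\sigma,J})$, after which Proposition~\ref{prop:unknotted}(2) applies. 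This transfer of unknottedness from $\pp$ to $P$ is the missing idea in your write-up.
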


\begin{proof}
First we check that the pair $(\Delta_J\times\Delta_\sigma , P_{\sigma, J})$ is locally flat. Consider the map
$$\phi\colon \Delta_J\times\Delta_\sigma \to \C,\quad (z_0,\dots z_n) \mapsto z_0+\dots +z_n.
$$
By an argument similar to the proof of \cite[Proposition 5]{KZ} one concludes that for small $|t|\leq \epsilon$ the fiber $\phi^{-1}(t)$ is a closed ball. In particular, $\phi^{-1}(0)=P_{\sigma, J}$ and the map $\phi$ exhibits the product structure in a small neighborhood of $P_{\sigma, J}\subseteq \Delta_J\times\Delta_\sigma$. 
As a consequence we also have the local flatness of the sphere pair $(\partial(\Delta_J\times\Delta_\sigma) ,\partial P_{\sigma, J})$. 

Combining Corollary~\ref{cor:complement} and part (1) of Proposition \ref{prop:unknotted} we conclude that the sphere pair $(\partial(\Delta_J\times\Delta_\sigma) ,\partial P_{\sigma, J})$ is unknotted except  the case $\dim (\Delta_J\times\Delta_\sigma) =5$. Then Lemma~\ref{lemma:complement} together with part (2) of Proposition \ref{prop:unknotted} yields the result when $\dim (\Delta_J\times\Delta_\sigma) \ne 4,5$. Lemma~\ref{lemma:42} takes care of the case $\dim (\Delta_J\times\Delta_\sigma) =4$. Thus we are done once we can apply part (2) of Proposition \ref{prop:unknotted} to the case $\dim (\Delta_J\times\Delta_\sigma) =5$, that is, once we know that the sphere pair $(\partial(\Delta_J\times\Delta_\sigma) ,\partial P_{\sigma, J})$ is unknotted in this case.

Let $\dim (\Delta_J\times\Delta_\sigma) =5$. Each face $F$ in $\partial(\Delta_J\times\Delta_\sigma)$ contains $P_{\sigma, J}\cap F$ as an unknotted ball by Lemma~\ref{lemma:42}). On the other hand, the same face $F$ contains the unknotted ball $\mathcal P_{\sigma, J}\cap F$ of the ober-tropical pants. The face posets of unknotted ball pairs $\{(F, P_{\sigma, J}\cap F)\}$ and $\{(F, \mathcal P_{\sigma, J}\cap F)\}$ are identical and the sphere pair $(\partial(\Delta_J\times\Delta_\sigma) ,\partial \mathcal P_{\sigma, J})$ is unknotted by Lemma~\ref{lemma:53}, hence so is the pair $(\partial(\Delta_J\times\Delta_\sigma) ,\partial P_{\sigma, J})$.
\end{proof}

\begin{remark}
Unfortunately the map $\phi\colon \Delta_J\times\Delta_\sigma \to \C$ above is not a fiber bundle at the boundary of its image. Namely the fibers $\phi^{-1}(t)$ for $|t|=1$ are generally lower dimensional. For example, in the $(B^4,B^2)$-case (cf.~Lemma~\ref{lemma:42}) the preimage of the entire circle $|t|=1$ is the M\"obius band. If the fibers were equi-dimensional balls we would have an explicit product structure and, hence, the unknotness for free.  
\end{remark}


\begin{proof}[Proof of Theorem \ref{thm:main}]
Propositions \ref{prop:complex_balls} and \ref{prop:ober_balls} provide homeomorphisms of ball pairs $(\Delta_J\times\Delta_\sigma , P_{\sigma, J})$ and $(\Delta_J\times\Delta_\sigma , \pp_{\sigma, J})$ which respect the stratifications. By the Alexander trick \cite{Al23}, the homeomorphisms are homotopically equivalent to the identity on all cells. An inductive application of Proposition~\ref{prop:unknotted} part (4) on strata gives an isotopy which respects the stratification.
\end{proof}

\end{document}